\title{\bf{Curve counting theories via stable objects~I. 
DT/PT correspondence}
}
\date{}
\author{Yukinobu Toda}
\DeclareFontFamily{U}{rsfs}{%
\skewchar\font127}
\DeclareFontShape{U}{rsfs}{m}{n}{%
<-6>rsfs5<6-8.5>rsfs7<8.5->rsfs10}{}
\DeclareSymbolFont{rsfs}{U}{rsfs}{m}{n}
\DeclareRobustCommand*\rsfs{%
\@fontswitch\relax\mathrsfs}
\theoremstyle{plain}
\newtheorem{thm}{Theorem}[section]
\newtheorem{prop}[thm]{Proposition}
\newtheorem{lem}[thm]{Lemma}
\newtheorem{defi}[thm]{Definition}
\newtheorem{rmk}[thm]{Remark}
\newtheorem{cor}[thm]{Corollary}
\newtheorem{step}{Step}
\newtheorem{sstep}{Step}
\newtheorem{ssstep}{Step}
\newtheorem{prop-defi}[thm]{Proposition-Definition}
\newtheorem{thm-defi}[thm]{Theorem-Definition}
\newtheorem{lem-defi}[thm]{Lemma-Definition}
\newtheorem{assum}[thm]{Assumption}
\newtheorem{conj}[thm]{Conjecture}
\newtheorem{exam}[thm]{Example}
\newdimen\argwidth
\def\db[#1\db]{
 \setbox0=\hbox{$#1$}\argwidth=\wd0
 \setbox0=\hbox{$\left[\box0\right]$}
  \advance\argwidth by -\wd0
 \left[\kern.3\argwidth\box0 \kern.3\argwidth\right]}
\newcommand{\aA}{\mathcal{A}}
\newcommand{\bB}{\mathcal{B}}
\newcommand{\cC}{\mathcal{C}}
\newcommand{\dD}{\mathcal{D}}
\newcommand{\eE}{\mathcal{E}}
\newcommand{\fF}{\mathcal{F}}
\newcommand{\hH}{\mathcal{H}}
\newcommand{\mM}{\mathcal{M}}
\newcommand{\oO}{\mathcal{O}}
\newcommand{\pP}{\mathcal{P}}
\newcommand{\qQ}{\mathcal{Q}}
\newcommand{\tT}{\mathcal{T}}
\newcommand{\uU}{\mathcal{U}}
\newcommand{\vV}{\mathcal{V}}
\newcommand{\wW}{\mathcal{W}}
\newcommand{\xX}{\mathcal{X}}
\newcommand{\yY}{\mathcal{Y}}
\newcommand{\zZ}{\mathcal{Z}}
\newcommand{\lr}{\longrightarrow}
\newcommand{\Supp}{\mathop{\rm Supp}\nolimits}
\newcommand{\Hom}{\mathop{\rm Hom}\nolimits}
\newcommand{\dL}{\mathbf{L}}
\newcommand{\Pic}{\mathop{\rm Pic}\nolimits}
\newcommand{\ch}{\mathop{\rm ch}\nolimits}
\newcommand{\rk}{\mathop{\rm rk}\nolimits}
\newcommand{\Ext}{\mathop{\rm Ext}\nolimits}
\newcommand{\Coh}{\mathop{\rm Coh}\nolimits}
\newcommand{\cneq}{\mathrel{\raise.095ex\hbox{:}\mkern-4.2mu=}}
\newcommand{\eqcn}{\mathrel{=\mkern-4.5mu\raise.095ex\hbox{:}}}
\newcommand{\NE}{\mathop{\rm NE}\nolimits}
\newcommand{\Cok}{\mathop{\rm cok}\nolimits}
\newcommand{\Aut}{\mathop{\rm Aut}\nolimits}
\newcommand{\Stab}{\mathop{\rm Stab}\nolimits}
\newcommand{\kakkoS}{\mathbb{C}\db[S\db]}
\newcommand{\kakkoSl}{\mathbb{C}\db[S_{\lambda}\db]}
\newcommand{\kakkoSll}{\mathbb{C}\db[S_{\lambda'}\db]}
\newcommand{\kakkoT}{\mathbb{C}\db[T\db]}
\newcommand{\DT}{\mathop{\rm DT}\nolimits}
\newcommand{\PT}{\mathop{\rm PT}\nolimits}
\newcommand{\modu}{\mathop{\rm mod}\nolimits}
\newcommand{\End}{\mathop{\rm End}\nolimits}
\newcommand{\Slice}{\mathop{\rm Slice}\nolimits}
\newcommand{\Imm}{\mathop{\rm Im}\nolimits}
\newcommand{\imm}{\mathop{\rm im}\nolimits}
\newcommand{\Ker}{\mathop{\rm ker}\nolimits}
\newcommand{\GL}{\mathop{\rm GL}\nolimits}
\newcommand{\tr}{\mathop{\rm tr}\nolimits}
\newcommand{\ex}{\mathop{\rm ex}\nolimits}
\newcommand{\cl}{\mathop{\rm cl}\nolimits}
\begin{document}
\maketitle
\begin{abstract}
The Donaldson-Thomas invariant is a
curve counting invariant on Calabi-Yau 3-folds
via ideal sheaves. 
 Another counting invariant via stable pairs 
is introduced by Pandharipande and Thomas, which 
counts pairs of curves and divisors on them. 
These two theories are conjecturally equivalent
via generating functions, called DT/PT 
correspondence. 
In this paper, we show the Euler characteristic 
version of DT/PT correspondence, using the 
notion of weak stability conditions and the
wall-crossing formula. 
\end{abstract}
\section{Introduction}
The purpose of this paper 
is to study 
curve counting on Calabi-Yau 3-folds via 
wall-crossing phenomena in the derived category. 
We will study the generating series of 
Donaldson-Thomas type invariants without 
virtual fundamental cycles, i.e. 
the Euler characteristics of the relevant moduli spaces. 
The main result is to show the 
Euler characteristic version of 
Pandharipande-Thomas conjecture~\cite[Conjecture~3.3]{PT}, 
which claims the equality of the generating series of 
Donaldson-Thomas invariants and 
counting invariants of stable pairs. 
In a subsequent paper~\cite{Tcurve2}, we will apply the method 
used in this paper to show the transformation formula 
of our generating series under flops and the 
generalized McKay correspondence
by Van den Bergh~\cite{MVB}. 
 
\subsection{Donaldson-Thomas invariant}
Let $X$ be a smooth projective Calabi-Yau 3-fold over 
$\mathbb{C}$, i.e. the canonical line bundle $\bigwedge^3 T_X^{\ast}$ 
is trivial. For a homology class $\beta \in H_2(X, \mathbb{Z})$ 
and $n\in \mathbb{Z}$, the moduli space which 
defines DT-invariant is the classical Hilbert scheme, 
$$I_{n}(X, \beta)=\left\{\begin{array}{ll} \mbox{ subschemes }
C\subset X, \dim C \le 1 \\
\mbox{ with } [C]=\beta, \ \chi(\oO_C)=n. \end{array}
\right\}.
$$
In other words, $I_n(X, \beta)$ is the moduli space of 
rank one torsion free sheaves $I \in \Coh(X)$ which 
satisfies $\det I=\oO_X$ and 
$$\ch(I)=(1, 0, -\beta, -n) \in H^{0}\oplus H^2 \oplus H^4 \oplus H^6.$$
Here we have regarded $\beta$ as an element of $H^4(X, \mathbb{Z})$
via the Poincar\'e duality. 
In fact such a sheaf $I$ is isomorphic to 
the ideal sheaf $I_C\subset \oO_X$
for a subscheme $C\subset X$ with $\dim C\le 1$, 
$[C]=\beta$ and $\chi(\oO_C)=n$. 
The moduli space $I_n(X, \beta)$ is projective and has a 
symmetric obstruction theory~\cite{Thom}. The associated 
virtual 
fundamental cycle has virtual dimension zero, and the
integration along it defines
the DT-invariant, 
$$I_{n, \beta}=\int_{[I_n(X, \beta)^{\rm{vir}}]}1 \in \mathbb{Z}.$$
We consider the generating series, 
$$\DT(X)=\sum_{n, \beta}I_{n, \beta}x^n y^{\beta}.$$
Let $\DT_{0}(X)$ be the contributions from
0-dimensional subschemes, 
$$\DT_{0}(X)=\sum_{n}I_{n, 0}x^n.$$
This is computed in~\cite{BBr}, \cite{Li}, \cite{LP},  
$$\DT_{0}(X)=M(-x)^{\chi(X)}, \quad M(x)=\prod_{k\ge 1}\frac{1}{(1-x^k)^k}.$$
The reduced Donaldson-Thomas theory is defined by, 
$$\DT'(X)\cneq \frac{\DT(X)}{\DT_{0}(X)}=
\sum_{\beta}\DT_{\beta}'(X)y^{\beta},$$
where $\DT_{\beta}'(X)$ is a Laurent series of $x$. 
The MNOP conjecture~\cite{MNOP} 
states that $\DT_{\beta}'(X)$ is the Laurent expansion of a 
rational function of $x$ invariant under $x\leftrightarrow 1/x$, 
and $\DT'(X)$ coincides with 
the generating series of Gromov-Witten invariants
after a suitable change of variables. 
\subsection{Pandharipande-Thomas theory}
Another curve counting theory via stable pairs is introduced by 
Pandharipande and Thomas~\cite{PT} in order to 
give a geometric understanding of the reduced DT-theory. 
By definition, a stable pair
$(F, s)$ consists of pure 1-dimensional sheaf
$F$ and a morphism $s\colon \oO_X \to F$ 
with 0-dimensional cokernel. 
In~\cite{PT}, the moduli space 
$$P_{n}(X, \beta)=\left\{ \begin{array}{l}
\mbox{ stable pairs }(F, s) \mbox{ with } \\
\mbox{ }[F]=\beta, \ \chi(F)=n. 
\end{array} \right\},$$
is shown to be a projective variety, and 
has a symmetric obstruction theory by viewing 
stable pairs as two-term complexes, 
\begin{align}\label{view}
(\cdots \to 0 \to \oO_X \stackrel{s}{\to} F \to 0 \cdots )
\in D^b(\Coh(X)).
\end{align}
Integrating along the virtual fundamental cycle defines
the invariant, 
$$P_{n, \beta}=\int_{[P_n(X, \beta)^{\rm{vir}}]} \in \mathbb{Z}.$$
We consider the generating series, 
\begin{align*}
\PT(X)&\cneq \sum_{n, \beta}P_{n, \beta}x^n y^{\beta} 
=\sum_{\beta}\PT_{\beta}(X)y^{\beta}, 
\end{align*}
where $\PT_{\beta}(X)$ is a Laurent series 
of $x$. In~\cite[Conjecture~3.3]{PT}, 
Pandharipande and Thomas state the following conjecture. 
\begin{conj}\emph{(\cite[Conjecture~3.3]{PT})}\label{conj:DTPT}
We have the equality of the generating series, 
$$\DT'(X)=\PT(X).$$
\end{conj}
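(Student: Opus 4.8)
The plan is to deduce the equality by realizing both $\DT'(X)$ and $\PT(X)$ as generating functions of Behrend-weighted Euler characteristics of moduli of semistable objects in a single triangulated category, and to pass from one to the other by wall-crossing in the motivic Hall algebra, with the virtual (Behrend-function) refinement layered on top of the Euler-characteristic argument. First I would construct an abelian category $\aA \subset D^b(\Coh X)$, a tilt of $\Coh_{\le 1}(X)$ along $\oO_X$, together with a one-parameter family of weak stability conditions $\sigma_t$ on $\aA$, so arranged that both the ideal sheaves parametrized by $I_n(X,\beta)$ and the two-term complexes \eqref{view} parametrized by $P_n(X,\beta)$ are objects of $\aA$ of one fixed numerical class (depending on $(\beta,n)$), and that the $\sigma_t$-stable objects of this class are stable pairs for $t$ near one endpoint and ideal sheaves for $t$ near the other. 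Between the two regimes there are only finitely many walls within any bounded region of the Chern character, and at each wall the jump of the semistable locus is governed by an identity in the Hall algebra.

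Next I would pass to the (completed) motivic Hall algebra $H(\aA)$ with its convolution product, and, for each fixed $\beta$, encode $\sum_n[\,I_n(X,\beta)\,]$ and $\sum_n[\,P_n(X,\beta)\,]$ — together with the Harder-Narasimhan components for $\sigma_t$ — as elements of $H(\aA)$. Iterating the Harder-Narasimhan recursion across the finitely many walls produces a universal identity in $H(\aA)$ relating $\sum_n[\,I_n(X,\beta)\,]$, $\sum_n[\,P_n(X,\beta)\,]$ and a single factor coming from the objects supported at points, namely $\oO_X$ (suitably shifted) and the $0$-dimensional sheaves, which is independent of $\beta$. This is the algebraic skeleton of the Euler-characteristic version: crucially it is an identity of stack classes, established before any integration map is applied.

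The decisive step — the one that promotes the Euler-characteristic identity to the virtual identity $\DT'(X)=\PT(X)$ — is to apply to this Hall-algebra identity the Behrend-weighted integration morphism rather than the naive Euler-characteristic one. This requires two inputs: (i) that the moduli stacks of objects of $\aA$ on the Calabi-Yau $3$-fold $X$ are, locally, critical loci of regular functions on smooth stacks, so that Behrend's constructible function $\nu$ is defined and $I_{n,\beta}=\chi(I_n(X,\beta),\nu)$, $P_{n,\beta}=\chi(P_n(X,\beta),\nu)$ reproduce the invariants of the symmetric obstruction theories (Behrend-Fantechi); and (ii) the Behrend-function identities of Joyce-Song — the multiplicativity $\nu(E_1\oplus E_2)=\nu(E_1)\nu(E_2)$ and the comparison of $\nu$-weighted integrals over $\mathbb{P}(\Ext^1(E_1,E_2))$ and $\mathbb{P}(\Ext^1(E_2,E_1))$ — which hold on $X$ thanks to the Serre-duality isomorphism $\Ext^i(E,F)\cong\Ext^{3-i}(F,E)^\vee$. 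Together, (i) and (ii) are exactly what makes the Behrend-weighted integration map a morphism of algebras from $H(\aA)$ to the relevant quantum/Poisson torus, so that the wall-crossing identity is transported to a bona fide identity of virtual generating series. Applying it, $\PT(X)$ is expressed as $\DT(X)$ divided by the universal point-contribution factor; identifying the latter with $\DT_0(X)=M(-x)^{\chi(X)}$ by the known degree-zero computation yields $\PT(X)=\DT(X)/\DT_0(X)=\DT'(X)$.

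The principal obstacle is item (ii) together with the attendant convergence bookkeeping: the Hall-algebra elements involved are infinite sums over $n$, so one must work in a suitable completion of $H(\aA)$, check that the Behrend-weighted integration map is continuous in the relevant topology, and verify that the sums of $\nu$-weighted Euler characteristics converge to well-defined Laurent series in $x$; in addition the local-critical-chart description and the Joyce-Song identities have to be applied uniformly to all objects of $\aA$ arising on the walls, including the non-reduced and positive-dimensional strata of the moduli stacks, not just to Gieseker-stable sheaves. Granting these inputs, the wall-crossing argument of this paper goes through verbatim with $\chi$ replaced throughout by the Behrend-weighted Euler characteristic $\chi(-,\nu)$, and establishes Conjecture~\ref{conj:DTPT}.
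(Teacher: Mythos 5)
Your proposal matches the paper's strategy essentially step for step: working in the tilted heart $\aA_X=\langle\oO_X,\Coh_{\le1}(X)[-1]\rangle_{\ex}$ inside the subcategory $\dD_X\subset D^b(\Coh X)$, introducing weak stability conditions to interpolate between the chambers where rank-one stable objects are ideal sheaves and where they are stable pairs, deriving a Hall-algebra wall-crossing identity that isolates a point-supported factor, and then (in the Appendix) promoting the Euler-characteristic identity to the virtual one by replacing the naive integration map with the Behrend-weighted one, using the Joyce--Song identities together with the Behrend--Getzler local critical-chart description of the moduli of objects in $\aA_X$. The only slip is cosmetic: the Joyce--Song multiplicativity identity carries a sign $(-1)^{\bar\chi(E_1,E_2)}$ which you omit, but this is exactly the sign the paper tracks via the two Lie algebras $C(X)$ and $\tilde C(X)$, and it does not affect the correctness of your outline.
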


\subsection{Main theorem}
In this paper, 
we study the series, 
$$\widehat{\DT}(X)=\sum_{n, \beta}\chi(I_n(X, \beta))x^n y^{\beta},$$
where $\chi(\ast)$ is the topological Euler characteristic. 
We can similarly define the series $\widehat{\DT}_{0}(X)$, 
$\widehat{\DT}'(X)$, 
$\widehat{\PT}(X)$, which are Euler characteristic
versions of $\DT_0(X)$, $\DT'(X)$, 
$\PT(X)$ respectively.  
The series $\widehat{\DT}(X)$ is 
closely related to $\DT(X)$ in the following sense. 
\begin{itemize}
\item If $I_n(X, \beta)$ is non-singular and connected, 
we have 
$$I_{n, \beta}=(-1)^{\dim I_n(X, \beta)}\chi(I_n(X, \beta)).$$
\item In general, there is Behrend's constructible function~\cite{Beh}, 
$$\nu \colon I_{n}(X, \beta) \to \mathbb{Z}, $$
such that $I_{n, \beta}$ is written as 
$$I_{n, \beta}=\sum_{n\in \mathbb{Z}}
n\chi(\nu^{-1}(n)).$$
\item As for $\widehat{\DT}_{0}(X)$, 
we have $\widehat{\DT}_{0}(X)=M(x)^{\chi(X)}$, 
so it is obtained from $\DT_{0}(X)$ by $x\leftrightarrow -x$. 
(cf.~\cite{Cheah}.)
\end{itemize}
Our main theorem is the following. 
\begin{thm}\label{thm:main}\emph{\bf{[Theorem~\ref{main:dtpt}]}}
We have the equality of the generating series, 
$$\widehat{\DT}'(X)=\widehat{\PT}(X).$$
\end{thm}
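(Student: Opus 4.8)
The plan is to realize both $\widehat{\DT}'(X)$ and $\widehat{\PT}(X)$ as specializations of a single counting problem inside the derived category $D^b(\Coh(X))$, and to connect them by a finite sequence of wall-crossings. The key observation is that ideal sheaves $I_C$ (viewed as rank-one objects with trivial determinant) and stable pairs $(\oO_X \to F)$ (viewed as two-term complexes as in \eqref{view}) both live, up to shift and duality, in a common abelian category obtained by tilting $\Coh(X)$ — concretely the extension-closure $\aA_X = \langle \oO_X[1], \Coh_{\le 1}(X) \rangle$, where $\Coh_{\le 1}(X)$ is the subcategory of sheaves supported in dimension $\le 1$. An object $E \in \aA_X$ with $[E] = -1 + \beta + n$ (in the appropriate truncated Chern character) and $\det = \oO_X$ is, roughly, either an ideal sheaf shifted by $1$ together with possible $0$-dimensional garbage, or a stable pair shifted by $1$, plus $0$-dimensional garbage; the difference between the two moduli problems is governed by how much $0$- and $1$-dimensional subsheaf content is allowed. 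I would introduce a one-parameter family of weak stability conditions $\sigma_t$ (limits of Bridgeland-type stability on $\aA_X$, of the sort used by Bayer and by Bridgeland's Hall-algebra approach) such that at one extreme $t \to 0^+$ the $\sigma_t$-semistable objects of the given class are exactly the (shifted) ideal sheaves, and at the other extreme $t \to \infty$ they are exactly the (shifted) stable pairs.

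**Next I would** set up the Euler-characteristic-weighted Hall algebra (or the motivic/Serre-subcategory Hall algebra of Joyce, in its $\chi$-specialization) of $\aA_X$, and define for each $t$ the generating element counting $\sigma_t$-semistable objects of every class $(\beta, n)$. The crucial input is that, because $X$ is Calabi-Yau $3$-fold, the relevant moduli stacks carry no extra symmetry obstructing the integration map, and the Euler-characteristic integration morphism from the Hall algebra to the commutative ring $\mathbb{Q}[[x^{\pm}, y]]$ — sending the class of a stack to its $\chi$ (suitably defined with stabilizers) — is an algebra homomorphism on the "semistable" generating series thanks to the no-pole / integrality phenomenon; this is where the Joyce–Song style argument that the logarithm of the stack-counting series lies in the "Lie algebra" part is invoked. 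Then the standard wall-crossing identity expresses the generating series at $\sigma_{t'}$ as a universal product (indexed by Harder–Narasimhan-type decompositions) over the generating series at $\sigma_t$, with combinatorial coefficients. Applying $\chi$-integration turns this into an explicit identity of power series.

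**Then** I would track what the two extreme series actually are. At $t\to\infty$ the semistable objects in class $(\beta,n)$ with $\det = \oO_X$ are precisely stable pairs, so the integrated series is $\widehat{\PT}(X)$. At $t\to 0^+$ the semistable objects are ideal sheaves but the $0$-dimensional quotients are \emph{not} yet separated out; I would show that the wall-crossing between $t\to 0^+$ and the genuine ideal-sheaf chamber simply multiplies by the $0$-dimensional contribution, which by the quoted formula $\widehat{\DT}_0(X) = M(x)^{\chi(X)}$ is exactly the factor one divides out to pass from $\widehat{\DT}(X)$ to $\widehat{\DT}'(X)$. Comparing, the walls crossed between the PT chamber and the (reduced) DT chamber contribute factors supported on classes with $\beta = 0$, i.e. purely $0$-dimensional objects, because any destabilizing subobject/quotient appearing on a wall between these two chambers must be a $0$-dimensional sheaf (a one-dimensional destabilizer would force a different slope structure that does not occur in this segment). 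Hence the product of all intermediate wall-crossing factors telescopes to $\widehat{\DT}_0(X)/\widehat{\DT}_0(X) = 1$ after one divides by $\widehat{\DT}_0(X)$ on the DT side — more precisely, $\widehat{\DT}(X) = \widehat{\DT}_0(X)\cdot\widehat{\PT}(X)$, which upon dividing by $\widehat{\DT}_0(X)$ is exactly $\widehat{\DT}'(X) = \widehat{\PT}(X)$.

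**The main obstacle** I expect is not the formal wall-crossing bookkeeping but two technical foundations: first, constructing the family of weak stability conditions $\sigma_t$ on the tilted category $\aA_X$ and proving that the set of walls in the class $(\beta,n)$ is \emph{finite}, with the two endpoints giving exactly ideal sheaves and stable pairs respectively — this requires boundedness of the relevant families and a careful analysis of which subobjects can destabilize; second, proving that the $\chi$-weighted integration map is a ring homomorphism on the relevant part of the Hall algebra, i.e. the "no-pole" theorem, which for Euler characteristics rather than full motives needs the Behrend-function identities or an independent argument (here one uses that we only need the \emph{un}weighted Euler characteristic, which is easier: multiplicativity of $\chi$ over the relevant stratifications plus the fact that the stacks in question are quotient stacks by connected groups). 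Granting these, the deduction $\widehat{\DT}'(X) = \widehat{\PT}(X)$ is a mechanical consequence of the wall-crossing formula and the computation of $\widehat{\DT}_0(X)$.
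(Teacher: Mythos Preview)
Your proposal is correct and follows essentially the same strategy as the paper: work in a tilted heart containing both $\oO_X$ and $\Coh_{\le 1}(X)$ (the paper uses $\aA_X=\langle \oO_X,\Coh_{\le 1}(X)[-1]\rangle_{\ex}$, a shift of your convention), construct a family of weak stability conditions whose two chambers recover ideal sheaves and stable pairs respectively (Proposition~\ref{prop:DTPT}), and then apply a Joyce-type wall-crossing formula in which the only destabilizers have $\beta=0$, so that the quotient $\widehat{\DT}(\sigma)/\widehat{\DT}_0(\sigma)$ is chamber-independent (Corollary~\ref{cor:inde}). The paper's main technical refinements over your sketch are the precise notion of \emph{weak} stability condition (Definition~\ref{def:stab}) tailored to make the DT chamber accessible, and packaging the argument as invariance of the quotient series rather than directly computing the wall-crossing product---but your endpoint identity $\widehat{\DT}(X)=\widehat{\DT}_0(X)\cdot\widehat{\PT}(X)$ is exactly what results.
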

In~\cite[Corollary~1.4]{Tolim2}, 
the author showed the rationality of the 
series $\widehat{\PT}_{\beta}(X)$. 
Hence 
we obtain the following.
\begin{cor}
The series $\widehat{\DT}'_{\beta}(X)$ is the Laurent 
expansion of a rational function of $x$, invariant 
under $x\leftrightarrow 1/x$. 
\end{cor}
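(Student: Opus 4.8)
The plan is to deduce the corollary as a formal consequence of the main theorem together with the rationality statement of~\cite[Corollary~1.4]{Tolim2}; all the substantive work has already been carried out in the proof of Theorem~\ref{thm:main}, and what remains is bookkeeping. Fix a class $\beta\in H_2(X,\mathbb{Z})$. First I would record the elementary point that $\widehat{\DT}'_\beta(X)$ is genuinely a Laurent series in $x$: for fixed $\beta$ the Hilbert scheme $I_n(X,\beta)$ is empty once $n$ is sufficiently negative, so $\widehat{\DT}_\beta(X)\in\mathbb{Z}((x))$, and since $\widehat{\DT}_0(X)=M(x)^{\chi(X)}$ has the form $1+O(x)$ its inverse again has integer coefficients; hence $\widehat{\DT}'_\beta(X)\in\mathbb{Z}((x))$ as well. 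Note in particular that the MacMahon factor $M(x)^{\chi(X)}$, which is not itself a rational function, is exactly cancelled in passing to the reduced series, so no irrationality is introduced there. The same remarks apply to $\widehat{\PT}_\beta(X)$.

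By Theorem~\ref{thm:main} we have the identity $\widehat{\DT}'(X)=\widehat{\PT}(X)$ of two-variable generating series; comparing the coefficients of $y^\beta$ gives $\widehat{\DT}'_\beta(X)=\widehat{\PT}_\beta(X)$ for every $\beta$. By~\cite[Corollary~1.4]{Tolim2}, $\widehat{\PT}_\beta(X)$ is the Laurent expansion of a rational function of $x$ which is invariant under $x\leftrightarrow 1/x$. Combining these two facts yields the asserted statement for $\widehat{\DT}'_\beta(X)$, and the argument is complete.

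If one prefers to see the functional equation intrinsically on the stable pair side rather than to import it, one can argue as follows. Viewing a stable pair $(F,s)$ as the two-term complex $I^\bullet=(\oO_X\stackrel{s}{\to}F)$ of~\eqref{view}, the derived dual $\dR\HOM(I^\bullet,\oO_X)$, after a suitable shift, is again the complex attached to a stable pair with the same class $\beta$ but with $\chi(F)=n$ replaced by $-n$; the Calabi-Yau hypothesis $\bigwedge^3 T_X^{\ast}\cong\oO_X$ is precisely what makes the dual of a stable pair a stable pair. This yields an isomorphism between $P_n(X,\beta)$ and $P_{-n}(X,\beta)$ at the level of $\mathbb{C}$-points, hence $\chi(P_n(X,\beta))=\chi(P_{-n}(X,\beta))$, which is the invariance of $\widehat{\PT}_\beta(X)$ under $x\leftrightarrow 1/x$.

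I do not expect any genuine obstacle here: the corollary is formal once Theorem~\ref{thm:main} is in hand. The only care needed is in the convention that $\widehat{\DT}'_\beta(X)$ and $\widehat{\PT}_\beta(X)$ are regarded as elements of $\mathbb{Z}((x))$, so that the phrases ``rational function of $x$'' and ``invariant under $x\leftrightarrow 1/x$'' are meaningful.
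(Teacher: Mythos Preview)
Your proposal is correct and matches the paper's approach exactly: the paper deduces the corollary in one line by combining Theorem~\ref{thm:main} with~\cite[Corollary~1.4]{Tolim2}, which is precisely your second paragraph. Your additional bookkeeping on Laurent series and the optional duality sketch are extras not present in the paper; the latter would need more care (the derived dual of a stable pair complex is not manifestly a stable pair without further argument, since a pure one-dimensional sheaf need not be Cohen--Macaulay), but since you correctly frame it as optional and your main argument stands independently, this does not affect the validity of the proof.
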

Note that the above result is conjectured in~\cite[Conjecture~1.1]{Li-Qin}. 
\subsection{Idea of the proof of
Theorem~\ref{thm:main}}
Our proof is based on the idea of 
Pandharipande and Thomas~\cite[Section~3]{PT}
to use Joyce's wall-crossing formula~\cite{Joy4} in the 
space of Bridgeland's stability conditions~\cite{Brs1} on 
the triangulated category $D^b(\Coh(X))$.
Suppose that there is a
stability condition $\sigma$ on $D^b(\Coh(X))$ such that
the ideal sheaf
$I_C$ for 
a 1-dimensional subscheme $C\subset X$
is $\sigma$-stable. 
If there is a 0-dimensional subsheaf $Q\subset \oO_C$, 
i.e. $\oO_X \to \oO_C$ is not a stable pair, then
there is a distinguished triangle 
in $D^b(\Coh(X))$, 
\begin{align}\label{QI}
 Q[-1] \lr I_C \lr I_{C'}, 
 \end{align}
where $\oO_{C'}=\oO_C/Q$. Then Pandharipande and Thomas claim that 
we can deform stability conditions from $\sigma$
to another stability condition $\tau$, 
such that 
the sequence (\ref{QI}) destabilizes $I_C$
with respect to $\tau$.  
Instead if we consider the flipped sequence
$$I_{C'} \lr E \lr Q[-1], $$
then the object $E$ should become $\tau$-stable. 
The object $E$ is isomorphic to 
a two-term complex (\ref{view}) determined by 
a stable pair, so $\sigma$ corresponds to the DT-theory and 
$\tau$ corresponds to the PT-theory. 
In this way, we can see that
the relationship between counting invariants of 
$\sigma$-stable objects and $\tau$-stable objects 
is relevant to Conjecture~\ref{conj:DTPT}.
In principle, there should exist a wall and chamber structure on 
the space of stability conditions, so that 
the counting invariants are constant on chambers but 
jump at walls. 
The transformation formula of counting invariants
under change of stability conditions, called the
\textit{wall-crossing formula}, 
is studied by Joyce~\cite{Joy4} in the case of 
abelian categories. 
As pointed out in~\cite[Section~3]{PT}, there are two  
issues in applying Joyce's theory.  
\begin{itemize}
\item 
We need to extend Joyce's work to stability conditions on 
the triangulated category
$D^b(\Coh(X))$. However there are no known 
examples of Bridgeland's stability conditions 
on $D^b(\Coh(X))$ for a projective Calabi-Yau 3-fold $X$. 
\item Joyce studies wall-crossing formula of 
counting invariants without 
virtual fundamental cycles.  
We need to establish a similar formula for invariants 
involving virtual classes, or Behrend's constructible functions. 
\end{itemize}
In this paper, we deal with the first issue. 
The idea consists of two parts.  
\begin{itemize}
\item Instead of working with $D^b(\Coh(X))$, we study the 
triangulated subcategory, 
$$\dD_X=\langle \oO_X, \Coh_{\le 1}(X) \rangle_{\tr} \subset D^b(\Coh(X)),$$
i.e. the smallest triangulated subcategory which contains 
$\oO_X$ and $F\in \Coh(X)$ with $\dim \Supp(F)\le 1$. 
Note that ideal sheaves $I_C$ and two-term complexes (\ref{view}) 
are contained in $\dD_X$. 
On the triangulated category $\dD_X$, we are
able to construct
Bridgeland's stability conditions.  
\item Although there are stability conditions on $\dD_X$, 
still there are technical
difficulties to study 
stability conditions on $\dD_X$
and wall-crossing phenomena. 
So we introduce the space of weak stability conditions on
triangulated categories, which generalizes Bridgeland's
stability conditions. It is easier to construct weak 
stability conditions than usual stability conditions, and 
the wall-crossing formula also becomes much more amenable. 
\end{itemize}
Based on these two ideas, we can justify the discussion
of Pandharipande and Thomas~\cite[Section~3]{PT} and give the 
proof of Theorem~\ref{thm:main}.
For the application in a subsequent paper~\cite{Tcurve2}, we 
show the wall-crossing formula in the space of weak stability 
conditions on $\dD_X$ under a general setting. 

As for the second issue, there are important progress recently. 
In~\cite{K-S}, Kontsevich and Soibelman establish the wall-crossing 
formula for motivic Donaldson-Thomas invariants, which 
essentially involves virtual classes~\cite[Theorem~7]{K-S}. 
Although their main result~\cite[Theorem~7]{K-S}
 relies
on the unsolved conjecture on Motivic Milnor 
fibers~\cite[Conjecture~4]{K-S}, 
their work is applied for numerical
Donaldson-Thomas invariants once we know the $l$-adic 
version of~\cite[Conjecture~4]{K-S}, which 
is solved in~\cite[Proposition~9]{K-S}. 
(However we still need an orientation data~\cite[Section~5]{K-S}
for the application of the result of Kontsevich and Soibelman.) 
If we are able to apply the work of Kontsevich and Soibelman, 
then Conjecture~\ref{conj:DTPT} follows from
the method in this paper. 
T. Bridgeland~\cite{BrH} also recently 
gives a proof
 of Conjecture~\ref{conj:DTPT}
assuming the result of~\cite{K-S}. 
His method is different from ours, and 
does not use any notion of stability 
conditions.  
In~\cite{JS}, Joyce and Song also study wall-crossing formula 
of counting invariants
involving virtual classes. At the moment
the author writes the first version of this paper, their work 
applies to counting invariants of coherent sheaves, and not 
to those of objects in the derived category. 
The only issue is the 
derived category version of~\cite[Theorem~5.3]{JS}, 
that is we need to show that the 
 local 
moduli space of objects in the derived category 
is described as a critical locus of some convergent function.
If the result of~\cite[Theorem~5.3]{JS}
 is extended to the case of the derived category, 
Conjecture~\ref{conj:DTPT} follows as well
from the method in this paper. 

After the author wrote the first version of this 
paper, 
it is announced that 
the above problem on the description of the local 
moduli space in the derived category is solved by 
Behrend and Getzler~\cite{BG}. 
Therefore we should now be able to give a complete proof
of Conjecture~\ref{conj:DTPT}. 
In the Appendix, we will give a proof of Conjecture~\ref{conj:DTPT}
using the result of~\cite{BG}. 

Finally we comment that Stoppa and Thomas~\cite{StTh}
investigate DT/PT correspondence via 
the wall-crossing of GIT stability. 
Then they show the same result of Theorem~\ref{thm:main}
applying 
 Joyce's theory, independently to our work. 
 It is remarkable that they do not use Joyce's 
 counting invariants of strictly semistable objects, 
 which will be introduced in
 Proposition-Definition~\ref{strict} in this paper.

\subsection{Content of the paper}
In Section~\ref{sec:compact}, 
we introduce the notion of weak stability conditions 
on triangulated categories, and study their 
general properties. 
In Section~\ref{sec:main}, we give a proof of 
Theorem~\ref{thm:main} assuming the result in the latter
sections
in a general setting. 
In Section~\ref{sec:walcro}, we give a general 
framework to discuss wall-crossing formula. 
In Section~\ref{sec:Wcro}, we establish the wall-crossing 
formula of generating series. 
In Section~\ref{sec:tech} and Section~\ref{sec:some}, we 
give the proofs of several technical lemmas. 

\subsection{Acknowledgement}
The author thanks Tom Bridgeland, 
Yunfeng Jiang,
Dominic Joyce, 
Kentaro Nagao and 
Richard Thomas 
for valuable discussions. 
He also thanks Kazushi Ueda for several comments on 
the manuscript, and Yan Soibelman for 
the comment on his work with Maxim Kontsevich. 
This work is supported by 
World Premier International 
Research Center Initiative
(WPI initiative), MEXT, Japan.
This work is partially supported by 
EPSRC grant EP/F038461/1.

\subsection{Notation and convention}
In this paper, all the varieties are 
defined over $\mathbb{C}$. 
For a triangulated category $\dD$, 
the shift functor is denoted by $[1]$. 
For a set of objects 
$S\subset \dD$, we denote by $\langle S \rangle_{\tr}\subset \dD$
the smallest triangulated subcategory of $\dD$ which 
contains $S$. 
Also we denote by $\langle S \rangle_{\ex}$ the 
smallest extension closed subcategory of $\dD$ which 
contains $S$. For an abelian category $\aA$ and a set of objects
$S\subset \aA$, the subcategory
$\langle S\rangle_{\ex} \subset \aA$
is also defined to be the smallest extension closed 
subcategory of $\aA$ which contains $S$. 
The abelian category of coherent sheaves is 
denoted by $\Coh(X)$. We say $F\in \Coh(X)$ is 
$d$-dimensional if its support is $d$-dimensional.

\section{Weak stability conditions on triangulated categories}
\label{sec:compact}
In this section, we introduce the notion 
of weak
stability conditions 
on triangulated categories, which generalizes 
Bridgeland's stability conditions~\cite{Brs1}. 
\subsection{Slicings}
Let $\dD$ be a triangulated category. Here 
we recall the notion of slicings on $\dD$ 
given in~\cite[Section~3]{Brs1}. 
\begin{defi}\label{def:slice}
\emph{
A \textit{slicing} on $\dD$ consists of a family of full subcategories 
$\{\pP(\phi)\}_{\phi \in \mathbb{R}}$, which satisfies the 
following. }
\begin{itemize}
\item \emph{For any $\phi \in \mathbb{R}$, we have 
$\pP(\phi)[1]=\pP(\phi+1)$.}
\emph{\item For $E_i \in \pP(\phi_i)$ with $\phi_1>\phi_2$, we have 
$\Hom(E_1, E_2)=0$. }
\emph{\item {\bf(Harder-Narasimhan filtration):}
For any non-zero object $E\in \dD$, we have the 
following collection of triangles:}
\begin{align}\label{Harder}
\xymatrix{
0=E_0 \ar[rr]  & &E_1 \ar[dl] \ar[rr] & & E_2 \ar[r]\ar[dl] & \cdots \ar[rr] & & E_n =E \ar[dl]\\
&  F_1 \ar[ul]^{[1]} & & F_2 \ar[ul]^{[1]}& & & F_n \ar[ul]^{[1]}&
}\end{align}
\emph{such that $F_j \in \pP (\phi _j)$ with $\phi _1 > \phi _2 > \cdots >\phi _n$. }
\end{itemize}
\end{defi} 
We also need an additional condition
called the \textit{local finiteness}. 
For an interval $I\subset \mathbb{R}$, the category $\pP(I)\subset \dD$
is defined to be 
$$\pP(I)=\langle \pP(\phi) : \phi \in I \rangle_{\ex} \subset \dD.$$
If $I=(a, b)$ with $b-a<1$, then the category $\pP(I)$ is 
a quasi-abelian category (cf.~\cite[Definition~4.1]{Brs1}). 
If we have a distinguished triangle 
$$A \stackrel{i}{\lr} B \stackrel{j}{\lr}C \lr A[1]$$
with $A, B, C\in \pP(I)$, we say $i$ is a \textit{strict 
monomorphism}, and $j$ is a \textit{strict epimorphism}. 
Then we say $\pP(I)$ is of \textit{finite length} if 
$\pP(I)$ is noetherian and artinian with respect to 
strict epimorphisms and strict monomorphisms respectively. 
(See~\cite[Section~4]{Brs1} for the detail.)
\begin{defi}\emph{
A slicing $\{\pP(\phi)\}_{\phi\in \mathbb{R}}$ is 
\textit{locally finite} if there 
exists $\eta>0$ such that 
for any $\phi \in \mathbb{R}$, 
the quasi-abelian category 
$\pP((\phi-\eta, \phi+\eta))$ is of finite length.}
\end{defi}
The set of locally finite slicings on $\dD$
 is denoted by $\Slice(\dD)$. 
 For $0\neq E\in \dD$ and $\pP\in \Slice(\dD)$, we 
 set $\phi_{\pP}^{+}(E)=\phi_1$
 and $\phi_{\pP}^{-}(E)=\phi_n$, where 
 $\phi_i$ are given by the 
 last condition of Definition~\ref{def:slice}. 
There is a generalized metric on $\Slice(\dD)$, 
given by  
\begin{align}\label{metSli}
d(\pP, \qQ)=
\sup_{0\neq E\in \dD}\left\{ 
\lvert \phi_{\pP}^{-}(E)-\phi_{\qQ}^{-}(E)\rvert, 
\lvert \phi_{\pP}^{+}(E)-\phi_{\qQ}^{+}(E)\rvert
\right\}\in [0, \infty], 
\end{align}
for $\pP, \qQ \in \Slice(\dD)$. 
It is shown in~\cite[Section~6]{Brs1} that 
$d(\pP, \qQ)=d(\qQ, \pP)$ and $d(\pP, \qQ)=0$ implies 
$\pP=\qQ$.  
\subsection{Weak stability conditions}
For a triangulated category $\dD$, let $K(\dD)$ be 
the Grothendieck 
group of $\dD$. We fix a finitely generated free abelian 
group $\Gamma$ together with a group homomorphism, 
$$\cl \colon K(\dD) \to \Gamma.$$
We also fix a filtration of 
$\Gamma$, 
\begin{align}\label{filt}
0\subsetneq \Gamma_0 \subsetneq \Gamma_1 \cdots \subsetneq 
\Gamma_N=\Gamma,
\end{align}
such that each subquotient 
$$\mathbb{H}_i\cneq \Gamma_i/\Gamma_{i-1}, \quad (0\le i\le N)$$
is a free abelian group. 
We set 
$\mathbb{H}_i^{\vee}\cneq \Hom_{\mathbb{Z}}(\mathbb{H}_i, \mathbb{C})$, 
and fix a norm $\lVert \ast \rVert_i$ on 
$\mathbb{H}_i \otimes_{\mathbb{Z}}\mathbb{R}$. 
 Given an element 
 $$Z=\{Z_i\}_{i=0}^{N} \in \prod_{i=0}^{N}\mathbb{H}_i^{\vee}, $$
 we define a map $Z\colon \Gamma \to \mathbb{C}$ as follows. 
 For $v\in \Gamma$, there is unique $0 \le m\le N$ 
 such that $v\in \Gamma_m\setminus \Gamma_{m-1}$. 
 Here we set $\Gamma_{-1}=\emptyset$. 
 Then $Z(v)$ is defined by 
 $$Z(v)\cneq Z_m([v]) \in \mathbb{C},$$
 where $[v]$ is a class of $v$ in 
 $\mathbb{H}_m$. 
  Using such $0 \le m\le N$, 
  the following map is also defined, 
  \begin{align}\label{norm}
  \lVert \ast \rVert \colon \Gamma \ni v 
  \mapsto \lVert v \rVert_{m} \in \mathbb{R}.
  \end{align}
  Below we often write 
$\cl(E) \in \Gamma$
 as $E\in \Gamma$ when there is no confusion. 

\begin{defi}\label{def:stab}
\emph{
We define the set $\Stab_{\Gamma_{\bullet}}(\dD)$ 
to be pairs $(Z, \pP)$, 
\begin{align}\label{pair0}
Z \in \prod_{i=0}^{N}\mathbb{H}_i^{\vee}, 
\quad \pP\in \Slice(\dD),
\end{align}
which satisfy the following axiom. }
\begin{itemize}
\item \emph{For any non-zero $E\in \pP(\phi)$, we have} 
\begin{align}\label{phase}
Z(E) \in \mathbb{R}_{>0}\exp(i\pi \phi).
\end{align}
\item \emph{{\bf(Support property):}
There is a constant $C>0$ such that for any 
non-zero $E\in \bigcup_{\phi\in \mathbb{R}}\pP(\phi)$, we have}
\begin{align}\label{support}
\lVert E \rVert \le C\lvert Z(E) \rvert.
\end{align}
\end{itemize}
\end{defi}
\begin{rmk}
If $N=0$ in (\ref{filt}), the set $\Stab_{\Gamma_{\bullet}}(\dD)$
coincides with the set of stability conditions
on $\dD$ introduced by Bridgeland~\cite{Brs1}, 
satisfying the support property. The support property 
is introduced by Kontsevich and Soibelman~\cite{K-S}
to refine 
Bridgeland stability and 
introduce the notion of stability data. We will use this 
property to show Theorem~\ref{thm:stab} below. 
\end{rmk}
\begin{rmk}
When $N=0$ in (\ref{filt}), the
 local finiteness condition automatically 
 follows if the support property is satisfied. 
 However for $N>0$, it seems that there is no reason 
 to conclude the local finiteness from the support
 property. 
\end{rmk}
We call an element of $\Stab_{\Gamma_{\bullet}}(\dD)$ 
a \textit{weak stability condition} on $\dD$. 
Although it is difficult to find a Bridgeland
stability on the derived category 
of coherent sheaves on algebraic varieties, it is 
rather easier to find a weak one as we see below. 
\begin{exam}\label{ex:pure}
\emph{
Let $X$ be a smooth projective variety of $\dim X=d$, 
$\omega$ an ample divisor on $X$, 
and $\dD=D^b(\Coh(X))$. 
We set $\Gamma$ to be the image of the chern character map, 
$$\cl\cneq \ch \colon K(\dD) \twoheadrightarrow
\Gamma\subset  H^{\ast}(X, \mathbb{Q}).$$
We choose a filtration (\ref{filt}) as 
$$\Gamma_i\cneq \Gamma \cap H^{\ge 2d-2i}(X, \mathbb{Q}).$$
In this case 
we have $\mathbb{H}_i=\Gamma \cap H^{2d-2i}(X, \mathbb{Q})$.
 Choose 
$0<\phi_d<\phi_{d-1}<\cdots <\phi_0<1$ and set 
$Z_i \in \mathbb{H}_i^{\vee}$ to be 
\begin{align}\label{ex:z}
Z_i(v)=\exp(i\pi \phi_i)\int_{X} v\cdot \omega^{i} \in \mathbb{C}.
\end{align}
We define the slicing $\{\pP(\phi)\}_{\phi \in \mathbb{R}}$ as 
follows. For $0<\phi \le 1$ with $\phi\neq \phi_i$ for 
any $i$, we set
$\pP(\phi)=\emptyset$. For $\phi=\phi_i$, set
$$\pP(\phi_i)=\{ E\in \Coh(X) : E\mbox{\rm{ is pure of }}
\dim \Supp(E)=i\}.$$
Other $\pP(\phi)$ for $\phi \in \mathbb{R}$
 is determined by the 
 first condition of Definition~\ref{def:slice}. 
 It is easy to check that 
$(\{Z_i\}_{i=0}^{d}, \pP)$ is an element of 
$\Stab_{\Gamma_{\bullet}}(\dD)$. }
\end{exam}
In what follows, we use the following notation. 
For $\sigma=(Z, \pP) \in \Stab_{\Gamma_{\bullet}}(\dD)$
and an interval $I\subset \mathbb{R}$, we set
\begin{align}\label{CI}
C_{\sigma}(I)\cneq 
\Imm (\cl \colon \pP(I) \to \Gamma)\subset \Gamma.
\end{align}
If $I\subset (a, a+1]$ for some $a\in \mathbb{R}$, we 
can define the \textit{phase} of 
$v\in C_{\sigma}(I)$ by, 
\begin{align}\label{def:phase}
\phi_{\sigma}(v)=\frac{1}{\pi}\arg Z(v) \in I.
\end{align}
\subsection{Constructions via t-structures} 
In this paragraph, we give another way of 
constructing elements of $\Stab_{\Gamma_{\bullet}}(\dD)$, 
using the notion of bounded t-structures. 
The readers can refer~\cite[Section~3]{Brs1} for
the notion of  
bounded t-structures, and their hearts.  
\begin{defi}\label{def:wefu}
\emph{
Let $\aA\subset \dD$ be the heart of a bounded t-structure on 
$\dD$. We say $Z \in \prod_{i=0}^{N}\mathbb{H}_i^{\vee}$
is a \textit{weak stability function} on $\aA$ if for any non-zero $E\in \aA$, 
we have} 
\begin{align}\label{def:weak}
Z(E) \in \mathfrak{H}\cneq \{ r\exp(i\pi \phi) : r>0, \ 0<\phi \le 1\}.
\end{align}
\end{defi}
By (\ref{def:weak}), we can uniquely 
determine the argument,
$$\arg Z(E) \in (0, \pi],$$
for any $0\neq E\in \aA$. 
For an exact sequence $0\to F\to E \to G\to 0$
in $\aA$, 
one of the
following equalities holds.
\begin{align*}
&\arg Z(F) \le \arg Z(E) \le \arg Z(G), \\
&\arg Z(F) \ge \arg Z(E) \ge \arg Z(G).
\end{align*}
\begin{rmk}\label{=<}
When $N=0$, a weak stability function 
coincides with a stability function introduced 
in~\cite[Definition~2.1]{Brs1}. 
In this case 
we have one of the following inequalities
\begin{align*}
&\arg Z(F) < \arg Z(E) < \arg Z(G), \\
&\arg Z(F) > \arg Z(E) > \arg Z(G), \\
&\arg Z(F)=\arg Z(E)=\arg Z(G),
\end{align*}
so 
an inequality such as $\arg Z(F)<\arg Z(E)=\arg Z(G)$
does not occur.
On the other hand we might have such an inequality
when $N>0$, 
and such a 
 function 
 determines a weak stability condition
in the sense of~\cite[Definition~4.1]{Joy3}. 
\end{rmk}
\begin{defi}\emph{
Let $Z\in \prod_{i=0}^{N}\mathbb{H}_i^{\vee}$
be a weak stability function on $\aA$. We say 
$0\neq E \in \aA$ is $Z$-\textit{semistable} (resp. \textit{stable}) if 
for any exact sequence $0 \to F \to E \to G\to 0$ we have} 
\begin{align}\label{argg}
\arg Z(F)\le \arg Z(G), \quad (\mbox{\rm{resp}.~}\arg Z(F)<\arg Z(G).)
\end{align}
\end{defi}
\begin{rmk}
If $N=0$, the
 condition (\ref{argg}) is equivalent to 
$\arg Z(F) \le \arg Z(E)$, (resp.~$\arg Z(F)<\arg Z(E)$.)
However for $N>0$, the condition (\ref{argg}) is not equivalent to 
the above condition, since we may have 
$\arg Z(F)<\arg Z(E)=\arg Z(G)$, as in Remark~\ref{=<}.
\end{rmk}
The notion of Harder-Narasimhan filtration is defined 
in a similar way to usual stability conditions. 
\begin{defi}\emph{
Let $Z\in \prod_{i=0}^{N}\mathbb{H}_i^{\vee}$
be a weak stability function on $\aA$.
A \textit{Harder-Narasimhan filtration} of an object $E\in \aA$ 
is a filtration 
$$0=E_0 \subset E_1 \subset \cdots \subset E_{k-1}\subset E_k=E, $$
such that each subquotient $F_j=E_j/E_{j-1}$ is 
$Z$-semistable with 
$$\arg Z(F_1)>\arg Z(F_2)>\cdots >\arg Z(F_k).$$
A weak stability function $Z$
is said to have the \textit{Harder-Narasimhan property} if any 
object $E\in \aA$ has a Harder-Narasimhan filtration.} 
\end{defi}
The following proposition is 
an analogue of~\cite[Proposition~2.4]{Brs1}. 
\begin{prop}\label{suHN}
Let $Z\in \prod_{i=0}^{N}\mathbb{H}_i^{\vee}$
be a weak stability function on $\aA$.
Suppose that the following chain conditions are satisfied. 

(a) There are no infinite sequences of subobjects in $\aA$, 
$$\cdots \subset E_{j+1} \subset E_j \subset \cdots \subset E_2 \subset E_1$$
with $\arg Z(E_{j+1})>\arg Z(E_j/E_{j+1})$ for all $j$. 

(b) There are no infinite sequences of quotients in $\aA$, 
$$E_1 \twoheadrightarrow E_2 \twoheadrightarrow \cdots \twoheadrightarrow 
E_j \stackrel{\pi_j}{\twoheadrightarrow}
 E_{j+1} \twoheadrightarrow \cdots$$
with $\arg Z(\Ker \pi_j)>\arg Z(E_{j+1})$ for all $j$. 

Then $Z$ has the Harder-Narasimhan property. 
\end{prop}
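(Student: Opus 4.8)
The plan is to adapt the proof of~\cite[Proposition~2.4]{Brs1} to the weak setting, with the only genuine modification coming from the fact that $Z$-semistability is now governed by the two-object inequality~(\ref{argg}) rather than by comparison with $\arg Z(E)$. First I would argue by contradiction: suppose some $E\in\aA$ has no Harder-Narasimhan filtration. Call such an $E$ \emph{bad}. A standard two-step reduction shows that if $E$ is bad then either $E$ admits a subobject $F\subset E$ with $\arg Z(F)>\arg Z(E/F)$ and $E/F$ bad, or $E$ admits a quotient $E\twoheadrightarrow G$ with $\arg Z(\Ker)>\arg Z(G)$ and $\Ker$ bad; more precisely one first observes that a bad object cannot itself be $Z$-semistable, and cannot be built (as described below) entirely from ``good'' pieces, so at least one of the two destabilizing moves produces a new bad object. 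Iterating whichever move is available would then produce an infinite chain of the type forbidden by (a) or (b), the desired contradiction. The subtlety is that the two conditions (a) and (b) are not symmetric with the maximal-destabilizing-subobject picture of the classical proof, so one must be slightly careful about which move one is allowed to iterate; one handles this by the usual bookkeeping argument showing that if the ``subobject move'' can only be applied finitely often then the ``quotient move'' must apply infinitely often, and vice versa.

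Concretely, the key steps would be: (1) Show that any $E$ which is not $Z$-semistable admits a subobject $F\subsetneq E$ with $\arg Z(F)>\arg Z(E/F)$; this is immediate from the definition of semistability. (2) Show that among all such $F$ there is a maximal one, or rather that condition (a) lets us pick $F\subset E$ with $\arg Z(F)$ maximal among all proper subobjects and with $F$ itself $Z$-semistable --- here one uses (a) to terminate the process of enlarging $F$ and to terminate the process of destabilizing $F$ from within, and one uses the seesaw-type inequality for $Z$ (the two displayed inequalities after Definition~\ref{def:wefu}) to check that the ``maximal destabilizing'' $F$ is semistable and that $\arg Z(F)>\arg Z(G)$ for every further quotient $E\twoheadrightarrow G$. (3) Pass to $E/F$ and repeat; if this terminated we would have built a Harder-Narasimhan filtration, so it does not terminate, and we produce an infinite quotient chain $E\twoheadrightarrow E/F_1\twoheadrightarrow\cdots$ whose kernels are semistable of strictly decreasing phase --- but such a chain contradicts condition (b) once one checks $\arg Z(\Ker\pi_j)>\arg Z(E_{j+1})$, which follows because $\Ker\pi_j$ is the next maximal destabilizer. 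Symmetrically, dualizing the roles of sub and quotient and using (a) in place of (b) handles the other case.

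The main obstacle, and the place where care is genuinely needed, is Step~(2): in Bridgeland's original proof the existence of a \emph{maximal} destabilizing subobject is extracted from a single noetherian-type hypothesis because semistability is phrased via $\arg Z(F)\le\arg Z(E)$, whereas here the correct phrasing~(\ref{argg}) means one cannot freely replace $\arg Z(E/F)$ by $\arg Z(E)$ (cf. the Remark following Definition~\ref{def:wefu}). I would resolve this by running the enlargement argument purely in terms of the pair $(F,E/F)$: given two candidate destabilizing subobjects $F_1,F_2$, their sum $F_1+F_2$ is again destabilizing (using that $(F_1+F_2)/F_1$ is a quotient of $F_2/(F_1\cap F_2)$ together with the seesaw inequalities), so one may always enlarge; condition (a) forbids this enlargement from continuing forever, yielding the maximal $F$, and a second application of (a) to subobjects of $F$ forces $F$ to be semistable. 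Once Step~(2) is established in this form, Step~(3) and the contradiction with (a)/(b) go through exactly as in the classical argument, and the proposition follows.
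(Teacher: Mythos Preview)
Your overall strategy---adapt Bridgeland's proof, flagging Step~(2) as the only place where the weak seesaw forces extra care---is exactly the paper's approach (the paper simply cites~\cite[Proposition~2.4]{Brs1} and~\cite[Theorem~4.4]{Joy3} and says the argument goes through). But your proposed resolution of Step~(2) has two concrete gaps.

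First, the claim that $F_1+F_2$ is again destabilizing whenever $F_1,F_2$ are is not supported by what you cite: from $0\to F_1\to F_1+F_2\to F_2/(F_1\cap F_2)\to 0$ and the weak seesaw you only learn that $\arg Z(F_1+F_2)$ lies between $\arg Z(F_1)$ and $\arg Z\bigl(F_2/(F_1\cap F_2)\bigr)$, which says nothing about $\arg Z\bigl(E/(F_1+F_2)\bigr)$. Second, and more decisively, your ``enlargement'' is an \emph{ascending} chain $F_1\subsetneq F_1+F_2\subsetneq\cdots$ of subobjects of $E$, whereas condition~(a) forbids only \emph{descending} chains satisfying $\arg Z(E_{j+1})>\arg Z(E_j/E_{j+1})$; so invoking~(a) to terminate the enlargement is illegitimate. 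Rewriting it as the quotient chain $E/F_1\twoheadrightarrow E/F_2\twoheadrightarrow\cdots$ does not help either: for that to be of type~(b) you would need $\arg Z(F_{j+1}/F_j)>\arg Z(E/F_{j+1})$, which does not follow from $F_{j+1}$ destabilizing $E$. Note also that even if both of your uses of~(a) worked separately, they pull in opposite directions: enlarging to a maximal destabilizer and then shrinking to a semistable subobject need not land on the same object. The actual construction of the maximal semistable destabilizer in~\cite{Brs1} and in Joyce's weak version~\cite{Joy3} keeps the process \emph{descending} throughout, so that~(a) applies directly; your sum-based shortcut bypasses this and does not terminate under the stated hypotheses.
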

\begin{proof}
Although our stability condition is a weak one, 
the same proof of~\cite[Proposition~2.4]{Brs1} still 
works. 
Also see the proof of~\cite[Theorem~4.4]{Joy3}. 
\end{proof} 
The following proposition is an analogue of~\cite[Proposition~5.3]{Brs1}, 
which relates weak stability conditions and weak stability functions on
the hearts of
 bounded t-structures. 
\begin{prop}\label{prop:corr}
Giving a pair
$(Z, \pP)$, where $Z\in \prod_{i=0}^{N}\mathbb{H}_i^{\vee}$
and $\pP$ is a slicing, satisfying (\ref{phase})
 is equivalent to 
giving a bounded t-structure on $\dD$ and a weak 
stability function on its heart with the Harder-Narasimhan 
property. 
\end{prop}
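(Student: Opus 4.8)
The plan is to mimic the proof of~\cite[Proposition~5.3]{Brs1}, splitting the equivalence into two constructions that are mutually inverse.

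\medskip

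\noindent\textbf{From a pair $(Z,\pP)$ to a t-structure with a weak stability function.}
Given $(Z,\pP)$ satisfying (\ref{phase}), I would first set $\aA\cneq \pP((0,1])$, where as before $\pP(I)$ is the extension-closed subcategory generated by the $\pP(\phi)$ with $\phi\in I$. The standard argument of~\cite[Section~3]{Brs1} shows that $\aA$ is the heart of a bounded t-structure: boundedness comes from the Harder-Narasimhan filtration (\ref{Harder}) for objects of $\dD$, together with $\pP(\phi)[1]=\pP(\phi+1)$; the axioms $\Hom(\pP(\phi_1),\pP(\phi_2))=0$ for $\phi_1>\phi_2$ give the vanishing $\Hom(\aA,\aA[<0])=0$ needed for a t-structure heart. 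Then for $0\neq E\in\aA$, the HN filtration of $E$ has phases $\phi_1>\cdots>\phi_n$ all lying in $(0,1]$, and I set $Z$ (the same data) as a function on $\Gamma$ via (\ref{phase}); one checks $Z(E)=\sum_j Z(F_j)\in\mathfrak{H}$ because each $Z(F_j)\in\mathbb{R}_{>0}\exp(i\pi\phi_j)$ with $\phi_j\in(0,1]$, so the imaginary parts are $\ge 0$ and the sum is nonzero, giving $\arg Z(E)\in(0,\pi]$. Thus $Z$ is a weak stability function on $\aA$ in the sense of Definition~\ref{def:wefu}. Finally, the HN filtration of $E$ with respect to $\pP$ restricts to an HN filtration in $\aA$: each $F_j\in\pP(\phi_j)\subset\aA$ is automatically $Z$-semistable in $\aA$ since any subobject $G\subset F_j$ in $\aA$ has $\arg Z(G)\le\pi\phi_j=\arg Z(F_j)$ (one must check this using that any $G\hookrightarrow F_j$ in $\aA$ has all HN phases $\le\phi_j$ by the $\Hom$-vanishing axiom), so $Z$ has the Harder-Narasimhan property.

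\medskip

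\noindent\textbf{From a t-structure with a weak stability function to a pair.}
Conversely, given a bounded t-structure with heart $\aA$ and a weak stability function $Z$ with the HN property, I define, for $\phi\in(0,1]$,
\[
\pP(\phi)\cneq\{E\in\aA : E \text{ is }Z\text{-semistable with }\arg Z(E)=\pi\phi\}\cup\{0\},
\]
and extend to all $\phi\in\mathbb{R}$ by $\pP(\phi+1)\cneq\pP(\phi)[1]$. I would then verify the three axioms of Definition~\ref{def:slice}: the shift compatibility holds by construction; the $\Hom$-vanishing $\Hom(E_1,E_2)=0$ for $E_i\in\pP(\phi_i)$, $\phi_1>\phi_2$, follows first for $\phi_1,\phi_2\in(0,1]$ from the usual argument (a nonzero map would have image contradicting semistability of $E_1$ or $E_2$ via (\ref{argg})), and the general case is reduced to this using that $\aA$ is the heart of a bounded t-structure, so the only nonzero $\Hom$ groups between objects concentrated in different shifts of $\aA$ go in the allowed direction; and the HN filtration (\ref{Harder}) for an arbitrary $0\neq E\in\dD$ is assembled from the cohomology objects $H^k_\aA(E)$ with respect to the t-structure (finitely many, nonzero, by boundedness) together with the HN filtrations of each $H^k_\aA(E)$ in $\aA$, concatenated in decreasing order of phase. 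One also checks (\ref{phase}) directly: if $E\in\pP(\phi)$ with $\phi\in(0,1]$ then $E$ is $Z$-semistable in $\aA$ with $\arg Z(E)=\pi\phi$, so $Z(E)\in\mathbb{R}_{>0}\exp(i\pi\phi)$, and the general case follows by the shift rule. It remains to see $\pP$ is a genuine slicing and that the two constructions are mutually inverse, which is a formal check as in~\cite[Section~5]{Brs1}.

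\medskip

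\noindent\textbf{Main obstacle.}
The statement as phrased does \emph{not} require local finiteness on the slicing side nor the support property, so I do not need to address those here; the delicate point, and the one I expect to absorb most of the work, is verifying the $\Hom$-vanishing axiom of a slicing in full generality for the $\pP(\phi)$ produced from $(\aA,Z)$ — specifically, the step reducing vanishing for arbitrary real phases to the case $\phi_1,\phi_2\in(0,1]$ and handling objects $E_i$ not concentrated in a single shift of $\aA$. The subtlety, relative to~\cite[Proposition~5.3]{Brs1}, is that for $N>0$ a weak stability function can have $\arg Z(F)<\arg Z(E)=\arg Z(G)$ (Remark~\ref{=<}), so semistability in the sense of (\ref{argg}) is strictly weaker than the naive $\arg Z(F)\le\arg Z(E)$; I will need to be careful that the standard "a nonzero map $E_1\to E_2$ forces a phase inequality" argument still goes through with this weaker notion — but it does, because (\ref{argg}) is applied to the subobject $\im(E_1\to E_2)\subset E_2$ and the quotient $E_1\twoheadrightarrow\im$, which is exactly the form in which semistability is stated. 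Everything else (boundedness of the t-structure, existence of HN filtrations in $\dD$ from those in $\aA$, the two maps being inverse) is routine and parallel to Bridgeland's proof and to~\cite[Theorem~4.4]{Joy3}.
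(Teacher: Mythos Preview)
Your overall approach matches the paper's: both simply follow \cite[Proposition~5.3]{Brs1} and write down the two mutually inverse constructions. The paper's proof is in fact terser than yours --- it only records the correspondence and defers all verification to Bridgeland.

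There is, however, a genuine error in your details. You write ``one checks $Z(E)=\sum_j Z(F_j)$'', but the map $Z\colon\Gamma\to\mathbb{C}$ defined just before Definition~\ref{def:stab} is \emph{not} a group homomorphism: for $v\in\Gamma_m\setminus\Gamma_{m-1}$ one sets $Z(v)=Z_m([v])$, and this depends on the level $m$. If the HN factors $F_j$ lie in different levels of $\Gamma_\bullet$, then $Z(E)\neq\sum_j Z(F_j)$ in general (e.g.\ if $\cl(F_1)\in\Gamma_0$ and $\cl(F_2)\in\Gamma_1\setminus\Gamma_0$, then $Z(E)=Z(F_2)$). The correct argument is: with $m=\max_j m_j$ one has $[\cl(E)]=\sum_{m_j=m}[\cl(F_j)]$ in $\mathbb{H}_m$, so $Z_m([\cl(E)])=\sum_{m_j=m}Z(F_j)$; this is a sum of elements of $\mathfrak{H}$, hence nonzero and in $\mathfrak{H}$, which in turn forces $\cl(E)\in\Gamma_m\setminus\Gamma_{m-1}$ and $Z(E)\in\mathfrak{H}$. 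So the conclusion survives but your argument does not. The same non-additivity issue bites your semistability check for $F_j\in\pP(\phi_j)$: showing only $\arg Z(G)\le\pi\phi_j$ for subobjects $G$ is \emph{not} sufficient in the weak setting (cf.\ Remark~\ref{=<}), since one can have $\arg Z(G)=\arg Z(F_j)>\arg Z(F_j/G)$; you must also verify $\arg Z(F_j/G)\ge\pi\phi_j$, which follows by the dual argument on quotients. You correctly flagged the phase-asymmetry subtlety in your ``Main obstacle'' paragraph for the $\Hom$-vanishing step, but missed that the same subtlety --- and more fundamentally the failure of additivity --- already enters the earlier, more basic verifications.
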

\begin{proof}
The proof is same as in~\cite[Proposition~5.3]{Brs1},
so we just describe how to give the correspondence. 
Given $Z\in \prod_{i=0}^{N}\mathbb{H}_i^{\vee}$
and a slicing $\{\pP(\phi)\}_{\phi\in \mathbb{R}}$
  satisfying (\ref{phase}), 
 the category 
 $$\aA=\pP((0, 1]),$$
  is the heart of a bounded 
t-structure and $Z$ is a weak 
stability function on $\aA$. Conversely suppose 
that $\aA \subset \dD$ is the heart of a bounded 
t-structure on $\dD$ and $Z$ 
is a weak stability function on it. 
For $0<\phi \le 1$, let $\pP(\phi)$ 
be the full additive subcategory of $\aA$, 
defined by 
$$\pP(\phi)=\left\{ E\in \aA : \begin{array}{l}
 E\mbox{ is }Z\mbox{-semistable with }\\
 Z(E) \in \mathbb{R}_{>0}\exp(i\pi \phi)
 \end{array}\right\}.
$$
The subcategory
 $\pP(\phi)$ for all $\phi \in \mathbb{R}$ is determined 
by the first condition of Definition~\ref{def:slice}. 
By the Harder-Narasimhan 
property of $Z$, $\pP$ is a slicing on $\dD$. 
\end{proof}
Below we write an element of $\Stab_{\Gamma_{\bullet}}(\dD)$
as $(Z, \pP)$ with $\pP\in \Slice(\dD)$, 
or 
$(Z, \aA)$ with $\aA\subset \dD$ the heart of a bounded
t-structure on $\dD$. 
The above proposition enables us to produce 
more examples of stability conditions. 
\begin{exam}\label{ex:A}
\emph{
(i)
Let $X$ be a $d$-dimensional smooth projective variety, 
$\dD=D^b(\Coh(X))$ and $\aA=\Coh(X)\subset \dD$. 
Then $Z=\{Z_i\}_{i=0}^{d}$ defined by (\ref{ex:z})
is a weak stability function on $\aA$. 
An object $E\in \aA$ is $Z$-semistable if and only if 
$E$ is a pure sheaf, thus $\{Z_i\}_{i=0}^{d}$
satisfies the Harder-Narasimhan property. 
In this way, we can recover 
the slicing $\{\pP(\phi)\}_{\phi \in \mathbb{R}}$ 
given in Example~\ref{ex:pure}.}

\emph{(ii)
Let $A$ be a finite dimensional $\mathbb{C}$-algebra, 
$\aA=\modu A$ the category of finitely generated right 
$A$-modules and $\dD=D^b(\aA)$. 
There is a finite number of simple objects 
$S_0, S_1, \cdots, S_N \in \aA$ such that 
$$K(\dD)=\bigoplus_{j=0}^{N}\mathbb{Z}[S_j].$$ 
Let $\Gamma=K(\dD)$ and $\cl \colon K(\dD) \to 
\Gamma$ the identity map. 
We choose the filtration (\ref{filt})
to be $\Gamma_j=\bigoplus_{a=0}^{j}\mathbb{Z}[S_a]$, 
hence $\mathbb{H}_j=\mathbb{Z}[S_j]$. 
Choose $0<\phi_j \le 1$ for 
$0\le j\le N$ 
 and set 
$Z_{j} \in \mathbb{H}_j^{\vee}$
to be 
$$Z_{j}(r[S_j])=r\exp(i\pi \phi_j). $$
Then $\{Z_{j}\}_{j=0}^{N}$ 
is a weak stability function on $\aA$. The corresponding 
pair $(\{Z_{j}\}_{j=0}^{N}, \pP)$ via Proposition~\ref{prop:corr}
gives an element 
of $\Stab_{\Gamma_{\bullet}}(\dD)$.} 
\end{exam}
\subsection{The space of weak stability conditions}
There is the 
inclusion, 
\begin{align}\label{inclu}
\Stab_{\Gamma_{\bullet}}(\dD) \subset \Slice(\dD)\times
\prod_{i=0}^{N}\mathbb{H}_i^{\vee}.
\end{align}
The generalized metric defined by (\ref{metSli}) 
induces a topology on $\Slice(\dD)$, and
we equip the set $\prod_{i=0}^{N}\mathbb{H}_i^{\vee}$
 with an usual Euclid topology. 
Thus we obtain the induced topology on $\Stab_{\Gamma_{\bullet}}(\dD)$
via the inclusion (\ref{inclu}). 
The following theorem is a generalization of~\cite[Theorem~1.2]{Brs1}, 
which makes each connected component of 
$\Stab_{\Gamma_{\bullet}}(\dD)$ a complex manifold. 
\begin{thm}\label{thm:stab}
The map
$$\Pi\colon \Stab_{\Gamma_{\bullet}}(\dD)
\ni(Z, \pP) \longmapsto Z 
\in \prod_{i=0}^{N}\mathbb{H}_i^{\vee}, $$
is a local homeomorphism. In particular 
each connected component of $\Stab_{\Gamma_{\bullet}}(\dD)$ 
is a complex manifold. 
\end{thm}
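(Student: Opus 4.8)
The plan is to mimic the proof of Bridgeland's deformation theorem \cite[Theorem~1.2]{Brs1} (together with the Kontsevich--Soibelman refinement via the support property), keeping careful track of the filtration $\Gamma_\bullet$ and the fact that $Z$ now lives in the graded piece $\prod_i \mathbb{H}_i^\vee$ rather than a single $\Hom(\Gamma,\mathbb{C})$. The topology on $\Stab_{\Gamma_\bullet}(\dD)$ is the one induced by the inclusion \eqref{inclu}, so near a fixed $\sigma_0=(Z_0,\pP_0)$ I must show two things: (i) for every $Z$ sufficiently close to $Z_0$ in $\prod_i \mathbb{H}_i^\vee$ there is a \emph{unique} slicing $\pP$ with $(Z,\pP)\in \Stab_{\Gamma_\bullet}(\dD)$ and $d(\pP,\pP_0)$ small, and (ii) the resulting map $Z\mapsto (Z,\pP)$ is continuous, i.e. is a local section of $\Pi$ and hence $\Pi$ is open. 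Continuity of $\Pi$ itself is immediate from the definition of the topology, so the content is in producing and controlling the section.

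First I would set up the ``deformation of slicings'' lemma: given $\sigma_0=(Z_0,\pP_0)$ and a real number $0<\epsilon<\tfrac14$, any slicing $\pP$ with $d(\pP,\pP_0)<\epsilon$ whose central charge satisfies \eqref{phase} is uniquely determined by $Z$ — this is the analogue of \cite[Lemma~6.4]{Brs1} and uses only that phases of $\pP_0$-semistable objects cannot move by more than $\epsilon$, so their images under $Z$ pin down the phase exactly once $Z$ is close to $Z_0$. The key quantitative input here is the support property \eqref{support}: it gives a uniform constant $C_0$ for $\sigma_0$, and since $\lVert\cdot\rVert$ and $|Z_0(\cdot)|$ are comparable on $C_{\sigma_0}(I)$ for small intervals $I$, any $Z$ with $\lVert Z-Z_0\rVert$ small still satisfies \eqref{support} with a slightly larger constant. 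This is exactly where the support property replaces the local-finiteness-based estimates of \cite{Brs1}; note the Remark in the excerpt warns that for $N>0$ local finiteness does not follow automatically, so one should \emph{not} try to route the argument through finite-length quasi-abelian subcategories.

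Next I would construct the slicing $\pP$ for each nearby $Z$. The natural approach is the one from \cite[Section~7]{Brs1}: work with the hearts $\pP_0((\phi-1,\phi])$ for $\phi$ in a fixed finite set of phases (finitely many matter by the support property and local finiteness of $\pP_0$ on bounded intervals), perturb the weak stability function $Z$ on each such heart, verify that it still satisfies \eqref{def:weak} and the chain conditions (a),(b) of Proposition~\ref{suHN} so that it has the Harder--Narasimhan property, and then invoke Proposition~\ref{prop:corr} to get a pair $(Z,\pP)\in$ the set of \eqref{pair0}. One must then check that the global $\pP$ obtained by gluing across the finitely many hearts is consistent — this is the step where Bridgeland glues HN filtrations — and that it satisfies the support property \eqref{support} with the perturbed constant. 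Here the filtration $\Gamma_\bullet$ intervenes because $Z$-(semi)stability for a weak stability function can feature the degenerate inequality $\arg Z(F)<\arg Z(E)=\arg Z(G)$ (Remark~\ref{=<}), so when perturbing $Z$ I must ensure the perturbation is compatible with the graded structure — i.e. it perturbs each $Z_i\in\mathbb{H}_i^\vee$ — and that semistable objects supported in a single graded piece behave as expected. Finally, continuity of $Z\mapsto d(\pP,\pP_0)\to 0$ as $Z\to Z_0$ follows from the explicit dependence of the perturbed phases on $Z$ together with the support-property bound controlling all objects uniformly, giving the local homeomorphism; the complex-manifold statement is then formal, since $\prod_i\mathbb{H}_i^\vee$ is a finite-dimensional complex vector space and $\Pi$ is a local homeomorphism onto an open subset of it.

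The main obstacle I expect is the gluing/uniqueness step in the presence of the filtration $\Gamma_\bullet$: in Bridgeland's setting the strict inequalities of a genuine stability function make HN filtrations rigid, but for weak stability functions one can have whole subcategories (e.g. sheaves of a fixed pure dimension, as in Example~\ref{ex:pure}) lying in a single slice $\pP(\phi_i)$, and one must check that perturbing $Z$ does not spuriously split or merge these, and that the deformed slicing is still \emph{locally finite} — which, as the Remark stresses, is not automatic for $N>0$ and must be propagated from $\sigma_0$ by hand using the support property and the boundedness of the t-structure. Verifying the chain conditions (a),(b) for the perturbed $Z$ on each heart, uniformly in a neighbourhood, is the technical heart of the argument.
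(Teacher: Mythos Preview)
Your overall strategy—mimic Bridgeland's deformation theorem, with the support property supplying the uniform phase estimate—is exactly right, and the paper's proof follows this plan. But the specific construction you propose for the deformed slicing is not what the paper (or Bridgeland) does, and as written it has a gap.

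You propose to work with full hearts $\pP_0((\phi-1,\phi])$, check that the perturbed $Z$ is still a weak stability function on each such heart (i.e.\ satisfies \eqref{def:weak}), verify the chain conditions of Proposition~\ref{suHN}, and then invoke Proposition~\ref{prop:corr}. The problem is that \eqref{def:weak} will \emph{fail} for the perturbed $Z$ on a full heart: an object $E\in\pP_0(\phi')$ with $\phi'$ very close to $\phi-1$ has $Z_0(E)$ just above the positive real axis, and an arbitrarily small perturbation of $Z$ can push $Z(E)$ below the real axis, out of $\mathfrak{H}$. Symmetrically, objects near phase $\phi$ can cross the negative real axis. So the perturbed $Z$ is not a weak stability function on the heart, and Proposition~\ref{prop:corr} does not apply. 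This is precisely why Bridgeland's argument does \emph{not} go through Proposition~5.3 of \cite{Brs1}.

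The paper instead follows \cite[Theorem~7.1]{Brs1} directly: one works with \emph{thin} quasi-abelian categories $\pP_0((a,b))$ with $b-a<1-2\varepsilon$ that ``envelope'' a given phase $\phi$ (meaning $a+\varepsilon\le\phi\le b-\varepsilon$), and defines $\qQ(\phi)$ to consist of $W$-semistable objects of phase $\phi$ inside such a category. Because the interval is short, the support-property estimate
\[
\left|1-\frac{W_m([E])}{Z_m([E])}\right|\le C\cdot \lVert W_m-Z_m\rVert
\]
for $\sigma_0$-semistable $E$ with $\cl(E)\in\Gamma_m\setminus\Gamma_{m-1}$ guarantees that $W$ never vanishes on nonzero objects of $\pP_0((a,b))$ and that phases move by less than $\varepsilon$, so semistability in the quasi-abelian sense is well posed. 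One then checks, exactly as in \cite{Brs1}, that $\qQ(\phi)$ is independent of the choice of thin envelope and that the resulting $\qQ$ is a slicing with $d(\pP_0,\qQ)<\varepsilon$; the support property for $(W,\qQ)$ follows from a direct estimate. Note that this \emph{does} route through finite-length quasi-abelian categories: their finite length comes from the local finiteness of the \emph{original} slicing $\pP_0$, which is part of the data, so your caution on this point is misplaced and actually steers you away from the correct construction.

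Finally, you are overestimating the difficulty introduced by the filtration $\Gamma_\bullet$. The only place it enters is that for a semistable $E$ one must use $Z_m$ and $W_m$ with $m$ the level of $\cl(E)$; once the displayed inequality above is in hand, the rest of Bridgeland's argument goes through verbatim. There is no extra gluing or splitting issue caused by the possibility $\arg Z(F)<\arg Z(E)=\arg Z(G)$: the definition of $W$-semistability in the thin envelope is the same inequality \eqref{argg}, and the proof that $\qQ$ is a slicing does not use strictness of the seesaw.
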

\begin{proof}
The proof is almost same as in~\cite[Theorem~7.1]{Brs1}. 
We give the outline of the proof in Section~\ref{sec:some}.
\end{proof}
\begin{rmk}\label{rmk:closed}
For $\sigma=(Z, \pP)\in \Stab_{\Gamma_{\bullet}}(\dD)$
and $0\neq E\in \dD$, we 
set $\phi_{\sigma}^{\pm}(E)\cneq \phi_{\pP}^{\pm}(E)$. 
By the definition of the generalized metric (\ref{metSli}), 
the maps 
$$\phi_{\ast}^{\pm}(E)\colon 
\Stab_{\Gamma_{\bullet}}(\dD) \to \mathbb{R}, $$
are continuous. In particular, the subset of 
$\sigma\in\Stab_{\Gamma_{\bullet}}(\dD)$ 
in which $E$ is 
semistable is 
a closed subset.  
\end{rmk}
Later on we will need the 
following lemma, 
which relates a family of points in $\prod_{i=0}^{N}\mathbb{H}_i^{\vee}$
and points in $\Stab_{\Gamma_{\bullet}}(\dD)$. 
The proof will be given in Section~\ref{sec:some}.
\begin{lem}\label{lem:cont}
Let 
$$(0, 1) \ni t \longmapsto Z_t \in 
\prod_{i=0}^{N}\mathbb{H}_i^{\vee},$$
be a continuous map, and 
$\aA \subset \dD$ the heart of a bounded t-structure 
on $\dD$. 
Suppose
 that $\sigma_t=(Z_t, \aA)$ determine points 
in $\Stab_{\Gamma_{\bullet}}(\dD)$. Then 
$\{\sigma_t\}_{t\in (0, 1)}$ is a
continuous family in $\Stab_{\Gamma_{\bullet}}(\dD)$. 
\end{lem}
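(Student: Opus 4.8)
The plan is to reduce the statement to the definition of the topology on $\Stab_{\Gamma_{\bullet}}(\dD)$ coming from the inclusion (\ref{inclu}). Since $t\mapsto Z_t$ is continuous by hypothesis, it suffices to show that $t\mapsto \pP_t$ is continuous as a map into $\Slice(\dD)$ with the generalized metric $d$ of (\ref{metSli}); here $\pP_t$ is the slicing attached to $(Z_t,\aA)$ via Proposition~\ref{prop:corr}. Concretely, fixing $t_0\in(0,1)$, I would estimate $d(\pP_{t_0},\pP_t)$ in terms of $\lvert\arg Z_t(v)-\arg Z_{t_0}(v)\rvert$ over the relevant classes $v\in\Gamma$, and show this tends to $0$ as $t\to t_0$. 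The crucial simplification is that all the $\sigma_t$ share the same heart $\aA=\pP_t((0,1])$, so the Harder--Narasimhan filtrations in $\dD$ with respect to all these stability conditions can be analyzed inside the single abelian category $\aA$: every object of $\dD$ has a finite filtration by shifts of objects of $\aA$ independent of $t$, and within $\aA$ the HN filtration for $\sigma_t$ refines only along exact sequences in $\aA$.

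The key steps, in order, would be as follows. First, reduce continuity at $t_0$ to the claim that $d(\pP_{t_0},\pP_t)\to0$, using (\ref{inclu}) and the product topology. Second, reduce the computation of $\phi^{\pm}_{\pP_t}(E)$ for an arbitrary $0\neq E\in\dD$ to the case $E\in\aA$, by writing $E$ via its (t-structure) cohomology objects $H^k_{\aA}(E)\in\aA$ and observing $\phi^{+}_{\pP_t}(E)=\max_k\bigl(\phi^{+}_{\pP_t}(H^k_{\aA}(E))-k\bigr)$ and similarly for $\phi^{-}$; the shift $[1]=[1]$ acts on phases by $+1$ uniformly in $t$. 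Third, for $E\in\aA$, bound $\phi^{\pm}_{\pP_t}(E)$ between $\arg$-values: one has $\phi^{-}_{\pP_t}(E)\pi$ and $\phi^{+}_{\pP_t}(E)\pi$ lying in the finite set $\{\arg Z_t(v):v\in C_t\}$ where $C_t$ is the (finite, $t$-independent as a subset of $\Gamma$ up to the relevant degree) collection of classes of subquotients appearing, and every such $\arg Z_t(v)$ is within $\varepsilon$ of $\arg Z_{t_0}(v)$ once $t$ is close to $t_0$, because $Z_t\to Z_{t_0}$ and the classes $v$ range over a bounded region of $\Gamma$ determined by $\cl(E)$. Fourth, assemble these estimates to get $\lvert\phi^{\pm}_{\pP_t}(E)-\phi^{\pm}_{\pP_{t_0}}(E)\rvert<\varepsilon$ uniformly in $E$, i.e.\ $d(\pP_{t_0},\pP_t)\le\varepsilon$, which gives the desired continuity.

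The main obstacle I anticipate is the uniformity in $E$ required by the supremum in (\ref{metSli}): a priori the classes $v$ of HN subquotients of $E$ can have unbounded $\lVert v\rVert$ as $E$ varies, and then $\arg Z_t(v)$ need not be uniformly close to $\arg Z_{t_0}(v)$. This is exactly where the support property (\ref{support}) enters: it controls $\lVert v\rVert$ by $C\lvert Z_t(v)\rvert$ for semistable $v$, and combined with the fact that the filtration $\Gamma_\bullet$ is finite and each $\mathbb{H}_i$ is finite rank, one shows that on each graded piece $\mathbb{H}_i$ the map $v\mapsto\arg Z_t(v)$ is uniformly (in $t$ near $t_0$ and in $v$ semistable) close to $v\mapsto\arg Z_{t_0}(v)$; the point is that the difference $\arg Z_t(v)-\arg Z_{t_0}(v)$ depends only on the ray $\mathbb{R}_{>0}v$ in $\mathbb{H}_i\otimes\mathbb{R}$, and on the compact space of such rays the continuous function $(t,\text{ray})\mapsto\arg Z_t-\arg Z_{t_0}$ is uniformly small for $t$ near $t_0$. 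I would handle the different graded pieces separately and take the maximum over the finitely many $i$. Everything else is the routine bookkeeping of transporting phase estimates through shifts and short exact sequences, for which I would simply invoke the analogous arguments in~\cite[Section~6, Section~7]{Brs1} and Proposition~\ref{prop:corr}.
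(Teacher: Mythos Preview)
Your approach is correct but genuinely different from the paper's. The paper does not estimate $d(\pP_{t_0},\pP_t)$ directly at all; instead it deduces Lemma~\ref{lem:cont} from a slightly stronger statement (Lemma~\ref{lem:scont}) whose proof runs as follows: by the local homeomorphism Theorem~\ref{thm:stab}, the continuous path $t\mapsto Z_t$ lifts near $t_0$ to a continuous path $t\mapsto\sigma_t'=(Z_t,\qQ_t)$ in $\Stab_{\Gamma_\bullet}(\dD)$ with $\sigma_{t_0}'=\sigma_{t_0}$; then one checks $\qQ_t((0,1])\subset\aA$, which forces $\qQ_t((0,1])=\aA$ since both are hearts, so $\sigma_t'=\sigma_t$ and continuity is inherited from the lift. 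Thus the paper black-boxes all the phase estimates inside Theorem~\ref{thm:stab} and reduces the extra work to a heart comparison.

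Your route bypasses Theorem~\ref{thm:stab} and argues the metric bound from scratch, exploiting that all $\sigma_t$ share the heart $\aA$. The uniformity worry you flag is real, but it can be resolved using only the support property at the single point $t_0$: the estimate $\lvert\arg Z_t(v)-\arg Z_{t_0}(v)\rvert<\pi\varepsilon$ for $\sigma_{t_0}$-semistable classes already yields, for every $E\in\aA$, the two one-sided inequalities $\phi^+_t(E)>\phi^+_{t_0}(E)-\varepsilon$ and $\phi^-_t(E)<\phi^-_{t_0}(E)+\varepsilon$; applying these to a $\sigma_t$-semistable $F$ shows $F\in\pP_{t_0}((\phi_t(F)-\varepsilon,\phi_t(F)+\varepsilon))$, and then running the $\sigma_t$-HN filtration of a general $E$ gives the remaining two inequalities without ever invoking a support constant at $t$. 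So no uniform bound on $C_t$ is needed, and your compactness-of-rays argument can be replaced by this bootstrap. What your approach buys is self-containment (it does not rely on the full deformation result), at the cost of repeating the phase-perturbation estimate~(\ref{arg}) that already appears in the proof of Theorem~\ref{thm:stab}. The paper's approach is shorter and scales to the tilted-heart situation of Lemma~\ref{lem:scont}, where the two hearts differ and your fixed-heart reduction would need modification.
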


\subsection{Group action}
Similarly to Bridgeland's stability conditions, 
the space $\Stab_{\Gamma_{\bullet}}(\dD)$ carries 
a group action of 
$\widetilde{\GL}^{+}(2, \mathbb{R})$, which 
is a universal covering space of $\GL^{+}(2, \mathbb{R})$.
Although we do not need this group action
in this paper, it seems 
worth putting it here as 
an analogue of~\cite[Lemma~8.2]{Brs1}. 
\begin{lem}\label{lem:group}
The space 
$\Stab_{\Gamma_{\bullet}}(\dD)$ carries a right 
action of the group $\widetilde{\GL}^{+}(2, \mathbb{R})$. 
\end{lem}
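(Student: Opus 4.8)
The plan is to mimic the proof of \cite[Lemma~8.2]{Brs1}, constructing the action explicitly in terms of the $(Z,\pP)$ description and then verifying that it preserves the defining axioms of $\Stab_{\Gamma_{\bullet}}(\dD)$. Recall that an element of $\widetilde{\GL}^{+}(2,\mathbb{R})$ can be presented as a pair $(T, f)$, where $T\in \GL^{+}(2, \mathbb{R})$ and $f\colon \mathbb{R}\to \mathbb{R}$ is an increasing function with $f(\phi+1)=f(\phi)+1$, such that the induced maps on $(\mathbb{R}^2\setminus\{0\})/\mathbb{R}_{>0}\cong \mathbb{R}/2\mathbb{Z}$ agree, i.e. $T$ and $f$ induce the same map on the circle. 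Given $\sigma=(Z,\pP)\in \Stab_{\Gamma_{\bullet}}(\dD)$ and $(T,f)$, I would define $\sigma'=(Z',\pP')$ by $Z' = T^{-1}\circ Z$ (applying $T^{-1}\in \GL^{+}(2,\mathbb{R})=\GL^+(\mathbb{C})$ componentwise to each $Z_i\in \mathbb{H}_i^{\vee}$, after identifying $\mathbb{C}\cong \mathbb{R}^2$) and $\pP'(\phi) = \pP(f(\phi))$.

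First I would check that $\pP'$ is again a locally finite slicing: the condition $\pP'(\phi)[1]=\pP'(\phi+1)$ follows from $f(\phi+1)=f(\phi)+1$ and the corresponding property of $\pP$; the $\Hom$-vanishing for decreasing phases is immediate from the monotonicity of $f$; the Harder--Narasimhan filtrations of $\pP'$ are literally those of $\pP$ with relabelled phases, and monotonicity of $f$ preserves the strict inequalities $\phi_1>\cdots>\phi_n$. Local finiteness is preserved because $f$, being an increasing bijection commuting with the shift $\phi\mapsto \phi+1$, carries intervals of small length to intervals of bounded (though possibly different) length, and one can shrink the constant $\eta$ accordingly; here the compatibility of $T$ and $f$ on the circle ensures there is a uniform bound. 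Next I would verify the phase compatibility axiom~(\ref{phase}): for $0\neq E\in \pP'(\phi)=\pP(f(\phi))$ we have $Z(E)\in \mathbb{R}_{>0}\exp(i\pi f(\phi))$, and since $T^{-1}$ lies in $\GL^{+}(2,\mathbb{R})$ it sends the ray of argument $\pi f(\phi)$ to the ray of argument $\pi \phi$ precisely because of the compatibility condition between $T$ and $f$; hence $Z'(E)=T^{-1}Z(E)\in \mathbb{R}_{>0}\exp(i\pi\phi)$ as required.

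Then I would check the support property~(\ref{support}): since $T^{-1}$ is a fixed linear automorphism of $\mathbb{R}^2$, there is a constant $c>0$ with $c^{-1}|w|\le |T^{-1}w|\le c|w|$ for all $w$, so the old constant $C$ for $\sigma$ can be replaced by $cC$ for $\sigma'$, uniformly over all semistable objects (the norm $\lVert E\rVert$ on $\Gamma$ is unchanged). This shows $\sigma'\in \Stab_{\Gamma_{\bullet}}(\dD)$. Finally I would confirm this is a right group action: the assignment $(\sigma,(T,f))\mapsto \sigma\cdot(T,f)=\sigma'$ satisfies $\sigma\cdot(\id,\id)=\sigma$ and $(\sigma\cdot g_1)\cdot g_2 = \sigma\cdot(g_1 g_2)$, which follows from $(T_1^{-1})(T_2^{-1})=(T_2T_1)^{-1}$ — note the order reversal is exactly what makes it a right action — together with $f_2\circ f_1$ being the function-component of $g_1 g_2$ in the convention where composition of the $f$'s matches multiplication in $\widetilde{\GL}^+(2,\mathbb{R})$. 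I expect the main obstacle to be the local finiteness check: unlike in the $N=0$ case treated in~\cite{Brs1}, here local finiteness is an extra hypothesis not implied by the support property (as noted in the remark after Definition~\ref{def:stab}), so one must argue carefully that the rescaling of phases by $f$ does not destroy the finite-length property of the quasi-abelian categories $\pP((\phi-\eta,\phi+\eta))$; the key point is that $f$ is uniformly bi-Lipschitz on $\mathbb{R}$ because it commutes with integer translation, so a suitable uniform $\eta'$ exists.
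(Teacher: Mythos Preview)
Your proposal is correct and follows exactly the same approach as the paper: define the action by $Z_i'=T^{-1}\circ Z_i$ and $\pP'(\phi)=\pP(f(\phi))$ for a pair $(T,f)\in\widetilde{\GL}^{+}(2,\mathbb{R})$. The paper's proof is a terse sketch that simply records this definition and asserts $\sigma'\in\Stab_{\Gamma_\bullet}(\dD)$, whereas you spell out the verifications of the slicing axioms, the phase condition~(\ref{phase}), the support property, local finiteness, and the group-action identities; your extra care with local finiteness (via the uniform bi-Lipschitz property of $f$) is a genuine detail the paper omits.
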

\begin{proof}
Note that the group $\widetilde{\GL}^{+}(2, \mathbb{R})$
is identified with the set of pairs $(T, f)$, where 
$f\colon \mathbb{R} \to \mathbb{R}$ is an increasing map 
with $f(x+1)=f(x)+1$, and 
$T\in \GL^{+}(2, \mathbb{R})$ such that the induced maps on 
$S^1=\mathbb{R}/2\mathbb{Z}=(\mathbb{R}^2\setminus\{0\})/\mathbb{R}_{>0}$
are the same. 
Given $\sigma=(\{Z_i\}_{i=0}^{N}, \pP) \in \Stab_{\Gamma_{\bullet}}(\dD)$ 
and $(T, f) \in \widetilde{\GL}^{+}(2, \mathbb{R})$, we 
set $Z_i'=T^{-1}\circ Z_i$ and $\pP'(\phi)=\pP(f(\phi))$. 
Then $\sigma'=(\{Z_i'\}_{i=0}^{N}, \pP')$
gives an element of $\Stab_{\Gamma_{\bullet}}(\dD)$, and 
the right action of $\widetilde{\GL}^{+}(2, \mathbb{R})$ is 
given in this way. 
\end{proof}
We give an example on the global structure 
of $\Stab_{\Gamma_{\bullet}}(\dD)$. 
\begin{exam}\emph{
Let $C$ be an elliptic curve, and $\dD=D^b(\Coh(C))$. 
We set $\Gamma$ and the filtration $\Gamma_{\bullet}$
as in Example~\ref{ex:pure}. 
In this case, 
the same proof of~\cite[Theorem~9.1]{Brs1} shows that
the action of $\widetilde{\GL}^{+}(2, \mathbb{R})$
on $\Stab_{\Gamma_{\bullet}}(\dD)$ is free and transitive. 
Hence we have} 
$$\Stab_{\Gamma_{\bullet}}(\dD) \cong \widetilde{\GL}^{+}(2, \mathbb{R}).$$
\end{exam}

\begin{rmk}
There is a close relationship between weak stability conditions 
and polynomial stability conditions introduced by Bayer~\cite{Bay}. 
Let us fix an isomorphism
\begin{align}\label{rmk:isom}
\Gamma \cong \bigoplus_{i=0}^{N}\mathbb{H}_i, 
\end{align}
and take a pair of the heart of a bounded t-structure 
$\aA \subset \dD$ and $Z=\{Z_i\}_{i=0}^{N} \in 
\prod_{i=0}^{N}\mathbb{H}_i^{\vee}$, satisfying (\ref{def:weak}).
Then the polynomial function $\widetilde{Z}_{m}\colon \Gamma \to \mathbb{C}$
given by 
$$\widetilde{Z}_{m}(v)=\sum_{i=0}^{N} m^i Z_i(v_i), $$
where $v_i \in \mathbb{H}_i$ is the $i$-th component of $v$, 
satisfies $\widetilde{Z}_m(v)\in \mathfrak{H}$ for $m\gg 0$. 
Hence the pair $(\widetilde{Z}_m, \aA)$ gives a 
polynomial stability condition, if the Harder-Narasimhan property is 
satisfied. However the set of 
$Z$-(semi)stable objects and that of $\widetilde{Z}_m$-(semi)stable
objects are different. It is easy to see that 
$$Z\mbox{-stable} \Rightarrow \widetilde{Z}_m\mbox{-stable}
\Rightarrow \widetilde{Z}_{m}\mbox{-semistable} \Rightarrow 
Z\mbox{-semistable}.$$
Therefore the notion of 
weak stability conditions is more coarse than that of
polynomial stability conditions. Roughly speaking, 
a polynomial stability condition is an analogue of Gieseker stability, 
and a weak stability condition is an analogue of 
$\mu$-stability. 
\end{rmk}
\begin{rmk}
It seems that $\Stab_{\Gamma_{\bullet}}(\dD)$ is 
a space of 
limiting degeneration points of the 
usual space of stability conditions $\Stab(\dD)$. 
Under the isomorphism (\ref{rmk:isom}), 
the multiplicative group $\mathbb{R}_{>0}$ acts on 
$\Gamma^{\vee}\cneq \Hom(\Gamma, \mathbb{C})
\cong \prod_{i=0}^{N}\mathbb{H}_i^{\vee}$ via 
$$t\cdot (Z_0, Z_1, \cdots, Z_N)=(Z_0, tZ_1, \cdots, t^N Z_N).$$
The above action lifts to an action on $\Stab_{\Gamma_{\bullet}}(\dD)$
via $(Z, \pP) \mapsto (t\cdot Z, \pP)$. Presumably 
there is a natural topology on the set, 
$$\overline{\Stab}_{\Gamma_{\bullet}}(\dD)
=\Stab(\dD) \coprod 
\left( \Stab_{\Gamma_{\bullet}}(\dD)/
\mathbb{R}_{>0}\right),$$
which makes $\overline{\Stab}_{\Gamma_{\bullet}}(\dD)$ 
a complex manifold with real 
codimension one boundary $\Stab_{\Gamma_{\bullet}}(\dD)/
\mathbb{R}_{>0}$.  
A choice of filtrations $\Gamma_{\bullet}$ should 
corresponds to a choice of limiting directions. 
\end{rmk}

\section{Proof of the Main Theorem}\label{sec:main}
In what follows, we assume that $X$ is a smooth projective 
Calabi-Yau 3-fold over $\mathbb{C}$.
We set $\Coh_{\le 1}(X)$ to be
\begin{align}\label{coh1}
\Coh_{\le 1}(X) &\cneq \{E\in \Coh(X) : \dim \Supp(E) \le 1\}. 
\end{align}
In this section, we show how Theorem~\ref{thm:main}
is proved via wall-crossing phenomena in the 
space of weak stability conditions on 
the following triangulated category, 
\begin{align}\label{D}
\dD_X=\langle \oO_X, \Coh_{\le 1}(X) \rangle_{\tr} \subset D^b(\Coh(X)).
\end{align}
We will construct weak stability conditions on 
$\dD_X$, investigate corresponding stable objects, and 
show Theorem~\ref{thm:main}. 
In the proof of Theorem~\ref{thm:main}, we will use 
a wall-crossing formula which will be established 
under a general setting in Section~\ref{sec:walcro}. 
\subsection{Construction of a t-structure on $\dD_X$.}
We begin with constructing a t-structure on $\dD_X$. 
First we recall the notion of torsion pairs
and tilting. 
\begin{defi}\emph{\bf{\cite{HRS}}}
\emph{Let $\aA$ be an abelian category, and 
$(\tT, \fF)$ a pair of subcategories of $\aA$. 
We say $(\tT, \fF)$ is a \textit{torsion pair}
if the following conditions hold.} 
\begin{itemize}
\item \emph{$\Hom(T, F)=0$ for any $T\in \tT$ and 
$F\in \fF$.}
\item \emph{Any object $E\in \aA$
fits into an exact sequence, 
\begin{align}\label{fits}
0 \lr T \lr E \lr F \lr 0, 
\end{align}
with $T\in \tT$ and $F\in \fF$.}
\end{itemize}
\end{defi}
Given a torsion pair $(\tT, \fF)$ on 
$\aA$, its \textit{tilting} is 
defined by 
\begin{align}\label{dag}
\aA^{\dag}\cneq \left\{ E\in D^b(\aA) : \begin{array}{l}
\hH^{-1}(E) \in \fF, \ \hH^{0}(E) \in \tT, \\
\hH^i(E)=0 \mbox{ for }i\notin \{-1, 0\}.
\end{array}
\right\},
\end{align}
i.e. $\aA^{\dag}=\langle \fF[1], \tT \rangle_{\ex}$ in 
$D^b(\aA)$. It is known that $\aA^{\dag}$ is the 
heart of a bounded t-structure on $D^b(\aA)$. 
(cf.~\cite[Proposition~2.1]{HRS}.)

\hspace{2mm}

Let $\Coh_{\ge 2}(X)$ be the subcategory of $\Coh(X)$, 
\begin{align}\label{oc}
\Coh_{\ge 2}(X)\cneq \{ E\in \Coh(X) : \Hom(\Coh_{\le 1}(X), E)=0\}.
\end{align}
It is easy to see that the pair 
\begin{align}\label{torpair}
(\Coh_{\le 1}(X), \Coh_{\ge 2}(X)),
\end{align} is a torsion pair on $\Coh(X)$. 
\begin{defi}\label{Coh}\emph{
We define the abelian category $\Coh^{\dag}(X)$ to be
the tilting with respect to (\ref{torpair}), i.e. 
$$\Coh^{\dag}(X)=\langle \Coh_{\ge 2}(X)[1], \Coh_{\le 1}(X)\rangle_{\ex}.$$}
\end{defi}
\begin{rmk}
The category $\Coh^{\dag}(X)$ is one of the hearts of 
 perverse t-structures introduced 
by Bezrukavnikov~\cite{Bez} and Kashiwara~\cite{Kashi}. 
\end{rmk}
\begin{rmk}\label{clo}
It is easy to see that the subcategory 
$\Coh_{\le 1}(X)\subset \Coh^{\dag}(X)$ is closed under 
subobjects and quotients in $\Coh^{\dag}(X)$. 
\end{rmk}
The above construction induces a t-structure on $\dD_X$. 
\begin{lem}\label{percoh}
The intersection $\aA_X=\dD_X \cap \Coh^{\dag}(X)[-1]$
in $D^b(\Coh(X))$
is the heart of a bounded t-structure on $\dD_X$, 
and $\aA_X$ is written as 
\begin{align}\label{tst1}
\aA_{X}=\langle \oO_X, \Coh_{\le 1}(X)[-1]\rangle_{\ex}.
\end{align}
\end{lem}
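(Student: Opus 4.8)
The plan is to recognise $\aA_X$ directly as the heart of a bounded t-structure on $\dD_X$ via the standard recognition criterion, and then to match it with the explicit subcategory. Throughout write $\cB=\Coh^{\dag}(X)[-1]$ and $\sS=\langle \oO_X, \Coh_{\le 1}(X)[-1]\rangle_{\ex}$. First I would record the elementary facts: since $\oO_X$ is torsion free we have $\Hom(\Coh_{\le 1}(X),\oO_X)=0$, so $\oO_X\in\Coh_{\ge 2}(X)$ and hence $\oO_X=(\oO_X[1])[-1]\in \Coh_{\ge 2}(X)[1][-1]\subset\cB$; also $\Coh_{\le 1}(X)[-1]\subset\cB$ directly from the definition of $\Coh^{\dag}(X)$. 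As $\dD_X$ is triangulated, hence extension closed, both generators of $\sS$ lie in $\dD_X\cap\cB=\aA_X$, so $\sS\subset\aA_X$. Conversely $\oO_X\in\sS$ and $\Coh_{\le 1}(X)=(\Coh_{\le 1}(X)[-1])[1]\subset\sS[1]$, whence $\dD_X=\langle\oO_X,\Coh_{\le 1}(X)\rangle_{\tr}=\langle\sS\rangle_{\tr}$.

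Next I would verify the negative Ext-vanishing $\Hom_{\dD_X}(A,B[k])=0$ for all $A,B\in\sS$ and $k<0$. Using the long exact sequences obtained from the defining filtrations of $A$ and of $B$, this reduces to the two generators, i.e.\ to: $\Hom^{k}(\oO_X,\oO_X)=H^{k}(\oO_X)=0$; $\Hom^{k}(\oO_X,F[-1])=H^{k-1}(X,F)=0$; $\Hom^{k}(F[-1],G[-1])=\Ext^{k}(F,G)=0$; and $\Hom^{k}(F[-1],\oO_X)=\Ext^{k+1}(F,\oO_X)$, which vanishes for $k\le 0$ since Serre duality on the Calabi-Yau threefold gives $\Ext^{\le 1}(F,\oO_X)\cong H^{\ge 2}(X,F)^{\vee}=0$ because $\dim\Supp F\le 1$. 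Here $F,G$ range over $\Coh_{\le 1}(X)$; the passage from generators to general objects of $\sS$ is the usual devissage along the filtrations.

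The remaining point is that every $E\in\dD_X$ admits a filtration by triangles with factors in $\sS[n_i]$, $n_1>\cdots>n_m$; together with the previous paragraph the recognition criterion for hearts of bounded t-structures then yields that $\sS$ is the heart of one on $\dD_X$. I would deduce this, and the identification with $\aA_X$, simultaneously by showing that $\dD_X$ is stable under the truncation functors $\tau^{\le n}_{\cB}$ of the bounded t-structure with heart $\cB$ on $D^{b}(\Coh(X))$; it is enough to treat $\tau^{\le 0}_{\cB}$, equivalently the cohomology functors $\hH^{n}_{\cB}$. Applying $\hH^{n}_{\cB}$ to the two generators $\oO_X$ and $\Coh_{\le 1}(X)$ yields again either those generators or shifts of sheaves of dimension $\le 1$, hence objects of $\sS$; and the class $\{E : \hH^{n}_{\cB}(E)\in\sS \text{ for all }n\}$ is triangulated once one knows that $\sS$ is closed under the kernels and cokernels, computed in $\cB$, of morphisms between its own objects. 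Granting that, $\dD_X$ is stable under $\hH^{n}_{\cB}$, so $\cB$ restricts to a bounded t-structure on $\dD_X$ with heart $\dD_X\cap\cB=\aA_X$, and every $E\in\dD_X$ has the required $\sS[\bullet]$-filtration with factors in $\dD_X\cap\cB=\aA_X\supseteq\sS$; by uniqueness of t-structure filtrations and the first paragraph this forces $\aA_X=\sS=\langle\oO_X,\Coh_{\le 1}(X)[-1]\rangle_{\ex}$, which is the claimed formula $(\ref{tst1})$.

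The main obstacle is precisely the closure of $\sS$ under kernels and cokernels of its internal morphisms inside the abelian category $\cB$ (equivalently, the stability of $\dD_X$ under $\cB$-truncation). I expect to handle this using the torsion pair $(\Coh_{\ge 2}(X),\Coh_{\le 1}(X)[-1])$ on $\cB$ — under which $\hH^{0}_{\Coh}$ is the exact torsion functor, so that any object of $\sS$ has torsion part built by iterated extension from copies of $\oO_X$ and torsion-free part in $\Coh_{\le 1}(X)[-1]$ — combined once more with the Serre-duality vanishing $\Ext^{\le 1}(\Coh_{\le 1}(X),\oO_X)=0$. Everything else is either immediate from the definitions or a direct appeal to the standard machinery for bounded t-structures.
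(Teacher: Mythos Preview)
Your overall strategy coincides with the paper's (packaged there as a general proposition applied with $\dD'=D^b(\Coh_{\le 1}(X))$ and $E=\oO_X$): both reduce to showing that $\dD_X$ is stable under the truncation functors of the heart $\bB\cneq\Coh^{\dag}(X)[-1]$, equivalently that $\sS$ is closed under kernels and cokernels computed in $\bB$, and then conclude by the recognition criterion. You correctly flag this closure as the main obstacle, but your plan for it does not work. The torsion functor $\hH^0_{\Coh}$ on $\bB$ is only left exact, not exact; and the torsion part of an object of $\sS$ need not be an iterated extension of copies of $\oO_X$. For instance, the ideal sheaf $I_C$ of a curve lies in $\sS$ via the exact sequence $0 \to \oO_C[-1] \to I_C \to \oO_X \to 0$ in $\bB$, yet $I_C \in \Coh_{\ge 2}(X)$, so its torsion part with respect to $(\Coh_{\ge 2}(X),\Coh_{\le 1}(X)[-1])$ is $I_C$ itself. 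The Serre duality vanishing $\Ext^{\le 1}(\Coh_{\le 1}(X), \oO_X) = 0$ only controls maps into $\oO_X$, not into general torsion objects such as $I_C$, so it does not rescue the argument.

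The missing ingredient is $\End(\oO_X)=\mathbb{C}$, which you never invoke. The paper uses it as follows: given a nonzero $u\colon \oO_X \to F$ with $F\in\sS$, one filters $F$ so that $u$ factors through a subobject admitting a surjection onto $\oO_X$ with kernel in $\sS$; the resulting composite $\oO_X\to\oO_X$ is nonzero, hence invertible, so $u$ is split injective and $\Cok(u)\in\sS$. A parallel argument, using that $\Coh_{\le 1}(X)[-1]$ is closed under subobjects and quotients in $\bB$, handles nonzero maps $G[-1]\to F$ with $G\in\Coh_{\le 1}(X)$. These two cases are all that is needed: an induction on the length of a presentation of an object of $\dD_X$ by the generators then shows that every $\bB$-cohomology lands in $\sS$, which is exactly the stability under truncation you were after.
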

\begin{proof}
Note that we have 
\begin{align}\label{coh[-1]}
D^b(\Coh_{\le 1}(X)) \cap \Coh^{\dag}(X)[-1]=\Coh_{\le 1}(X)[-1],
\end{align}
 hence (\ref{coh[-1]}) is 
the heart of a bounded t-structure on $D^b(\Coh_{\le 1}(X))$.  
For $F\in \Coh_{\le 1}(X)$, we have 
$\Hom(\oO_X, F[-1])=0$ and 
\begin{align*}
\Hom(F[-1], \oO_X)&\cong H^2(X, F)^{\vee} \\
&= 0,
\end{align*}
by the Serre duality. 
Then the result follows by 
setting $\dD=D^b(\Coh(X))$, $\dD'=D^b(\Coh_{\le 1}(X))$, 
$\aA=\Coh^{\dag}(X)[-1]$ and $E=\oO_X$ in 
Proposition~\ref{t-str} below. 
\end{proof}
We have used the following proposition, 
whose proof will be given in Section~\ref{sec:tech}. 
\begin{prop}\label{t-str}
Let $\dD$ be a $\mathbb{C}$-linear triangulated category 
and $\aA \subset \dD$ the heart of a bounded t-structure on $\dD$. 
Take $E\in \aA$ with
$\End(E)=\mathbb{C}$ and a full triangulated subcategory
 $\dD'\subset \dD$, which satisfy 
 the following conditions. 
\begin{itemize}
 \item The category $\aA'\cneq \aA\cap \dD'$
is the heart of a bounded t-structure on $\dD'$, which 
is closed under subobjects and quotients in the 
abelian category $\aA$. 
\item For any object $F\in \aA'$, we have 
\begin{align}\label{cond}
\Hom(E, F)=\Hom(F, E)=0.
\end{align}
\end{itemize}
Let $\dD_E$ be the triangulated category, 
$$\dD_E \cneq \langle E, \dD' \rangle_{\tr} \subset \dD.$$
Then $\aA_E\cneq \dD_E \cap \aA$ is the heart 
of a bounded t-structure on $\dD_E$, 
which satisfies 
$$\aA_E=\langle E, \aA' \rangle_{\ex}.$$
\end{prop}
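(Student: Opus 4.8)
The plan is to verify directly that $\aA_E \cneq \dD_E \cap \aA$ satisfies the characterization of the heart of a bounded t-structure on $\dD_E$, and simultaneously to identify it with the extension-closed subcategory $\langle E, \aA' \rangle_{\ex}$. Recall that a full additive subcategory $\aA_E$ of a triangulated category $\dD_E$ is the heart of a bounded t-structure if and only if (i) $\Hom(A, B[k]) = 0$ for all $A, B \in \aA_E$ and $k < 0$, and (ii) every object $G \in \dD_E$ admits a finite filtration by triangles whose graded pieces are (shifts of) objects in $\aA_E$, with the shifts in strictly decreasing order. Both conditions must be checked for $\aA_E$ as defined; the claim that $\aA_E = \langle E, \aA' \rangle_{\ex}$ will fall out of the construction of these filtrations.

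First I would record the basic vanishings. Since $\aA' \subset \aA$ is closed under subobjects and quotients, it is an abelian subcategory of $\aA$, and the hypothesis $(\ref{cond})$ gives $\Hom(E, F) = \Hom(F, E) = 0$ for all $F \in \aA'$. Because $\aA$ is a heart, $\Hom(A, B[k]) = 0$ for $A, B \in \aA$ and $k < 0$; so in particular property (i) holds for $\aA_E$, as $\aA_E \subset \aA$. Next I would show $\langle E, \aA' \rangle_{\ex} \subset \aA_E$: objects of $\langle E, \aA' \rangle_{\ex}$ lie in $\dD_E$ by definition of $\dD_E = \langle E, \dD'\rangle_{\tr}$, and they lie in $\aA$ because $\aA$ is extension-closed in $\dD$ and contains both $E$ and $\aA'$. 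For the reverse inclusion and for condition (ii), the key step is: given an arbitrary $G \in \dD_E$, use that $\aA$ is a bounded t-structure on $\dD$ to get the cohomology objects $\hH^i_{\aA}(G) \in \aA$, which vanish for $|i| \gg 0$; it then suffices to handle the case $G \in \aA$ (i.e. $G \in \aA_E$), reducing (ii) to showing $\aA_E = \langle E, \aA'\rangle_{\ex}$.

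So the heart of the matter is: if $G \in \aA$ lies in $\dD_E$, then $G$ is built by extensions from $E$ and objects of $\aA'$. Here I would argue via the triangulated structure of $\dD_E$. Every object of $\dD_E = \langle E, \dD'\rangle_{\tr}$ is obtained from $E$ and objects of $\dD'$ by finitely many shifts and cones; using the t-structure $\aA'$ on $\dD'$, one further decomposes the $\dD'$-contributions into shifts of objects of $\aA'$. Now take $G \in \aA \cap \dD_E$. One filtration to exploit: $G$ has a maximal subobject $G_1 \in \aA'$ (this requires a noetherian-type argument, or one can instead use the torsion-pair structure — in the concrete application $\aA' = \Coh_{\le 1}(X)[-1]$ is closed under subobjects/quotients, so the $\aA'$-subobjects of $G$ form a suitable poset), with quotient $G/G_1$ having no nonzero subobject in $\aA'$; then one shows $G/G_1$ is a direct sum of copies of $E$ by using $\Hom(F, G/G_1) = 0$ for $F \in \aA'$ together with a computation of $\Hom$'s inside $\dD_E$. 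The orthogonality $(\ref{cond})$ and the closure of $\aA'$ under sub/quotients are exactly what make this decomposition forced rather than merely possible.

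I expect the main obstacle to be this last step — proving that an object $G \in \aA \cap \dD_E$ with no nonzero $\aA'$-subobject is a sum of copies of $E$ (and dually controlling quotients), i.e. ruling out "mixed" objects in $\aA_E$ that are not extensions of $E$'s by $\aA'$-objects. The right way through is probably to first establish that $(\langle \aA' \rangle, \langle E \rangle)$ — or rather the pair $(\aA', \{$objects with only $E$-pieces$\})$ — forms a torsion pair inside $\aA_E$, which pins down the structure; the hypothesis $\End(E) = \mathbb{C}$ then forces the "$E$-part" to be a direct sum $E^{\oplus r}$ since $\Hom(E, E[1])$ contributions are killed by the t-structure bound and there are no other simple constituents available. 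Once $\aA_E = \langle E, \aA'\rangle_{\ex}$ is known, boundedness of the t-structure on $\dD_E$ follows from boundedness of $\aA$ on $\dD$ restricted along the inclusion, completing the proof.
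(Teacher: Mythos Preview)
Your reduction step hides a circularity. You write that for arbitrary $G \in \dD_E$ one should ``use that $\aA$ is a bounded t-structure on $\dD$ to get the cohomology objects $\hH^i_{\aA}(G) \in \aA$ \dots\ it then suffices to handle the case $G \in \aA$.'' But for this filtration to give condition (ii) \emph{inside} $\dD_E$, you need each $\hH^i_{\aA}(G)$ to lie in $\dD_E$ --- equivalently, that $\dD_E$ is closed under the truncation functors of the ambient t-structure. This is not automatic from the definition $\dD_E = \langle E, \dD'\rangle_{\tr}$; it is precisely the content of the proposition. So you cannot reduce to $G \in \aA_E$ before proving that the cohomologies land there. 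Your subsequent maximal-$\aA'$-subobject argument then attacks the wrong target: it tries to decompose an object already assumed to be in $\aA_E$, and even there the claim that the quotient $G/G_1$ must be $E^{\oplus r}$ is never justified --- you invoke ``a computation of $\Hom$'s inside $\dD_E$'' without saying which one, and no finiteness hypothesis guarantees a maximal $\aA'$-subobject exists in the first place.

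The paper's proof addresses exactly this gap. It shows directly, by induction on the length of $G \in \dD_E$ as an iterated cone of shifts of $E$ and of objects of $\aA'$, that $\hH^i_{\aA}(G) \in \langle E, \aA'\rangle_{\ex}$ for every $i$. The inductive step takes a triangle $G' \to F \to H$ with $G' \in \{E\} \cup \aA'$ and $F$ already known, and controls the long exact sequence in $\hH^{\bullet}_{\aA}$. The key technical input is a preliminary lemma: for $F \in \langle E, \aA'\rangle_{\ex}$ and a nonzero morphism $u$ into $F$ from either $E$ or an object of $\aA'$, the cokernel $\Cok(u)$ taken in $\aA$ remains in $\langle E, \aA'\rangle_{\ex}$ (and in the first case $u$ is injective). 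This is where $\End(E)=\mathbb{C}$, the vanishing $\Hom(E,\aA')=\Hom(\aA',E)=0$, and closure of $\aA'$ under subobjects and quotients are actually used. You mention the iterated-cone description of $\dD_E$ in passing, but then abandon it for the torsion-pair sketch; the fix is to run the induction along that description and prove the cokernel lemma that makes the induction go through.
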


\begin{rmk}\label{Ox}
By Remark~\ref{clo}, 
the subcategory $\Coh_{\le 1}(X)[-1]\subset \aA_X$
is also closed under subobjects and quotient. 
In particular $\oO_x[-1] \in \aA_X$ is a simple 
object for any closed point $x\in X$. 
\end{rmk}

\subsection{Weak stability conditions on $\dD_X$}
In this paragraph, we construct weak stability conditions on 
$\dD_X$. 
 Let $N_{1}(X)$, $N^{1}(X)$ be the
 abelian groups of curves in $X$, divisors in $X$ respectively. 
 They are finite rank free abelian groups, and 
 there is a perfect pairing, 
 $$N_1(X)_{\mathbb{R}} \times N^1(X)_{\mathbb{R}} 
 \ni (C, D) \mapsto C\cdot D \in \mathbb{R}.$$
 We denote by $\NE(X)\subset N_1(X)$ the numerical 
 classes of effective curves, and 
 $A(X)\subset N^1(X)_{\mathbb{R}}$ the 
 ample cone. 
 We set 
 $$N_{\le 1}(X)=\mathbb{Z}\oplus N_1(X), $$
 and define $\Gamma$ to be
 $$\Gamma=N_{\le 1}(X) \oplus \mathbb{Z}.$$
 The group homomorphism $\cl \colon K(\dD_X) \to \Gamma$ is defined by 
$$\cl(E)=(\ch_3(E), \ch_2(E), \ch_0(E)).$$
By the definition of $\dD_X$, it is obvious that 
$\ch_{\bullet}(E)$ has integer coefficients
 thus $\cl$ is well-defined. 
 We denote by $\rk$ the projection onto the third factor, 
 $$\rk \colon \Gamma \ni (s, l, r) \mapsto r \in \mathbb{Z}.$$
 Let $\Gamma_0=\mathbb{Z}$, $\Gamma_1=N_{\le 1}(X)$
 and $\Gamma_2=\Gamma$. This defines a filtration $\Gamma_{\bullet}$, 
 \begin{align}\label{hook}
 \Gamma_0 \stackrel{i}{\hookrightarrow}
  \Gamma_1 \stackrel{j}{\hookrightarrow}
 \Gamma_2=\Gamma,
 \end{align}
  via $i(s)=(s, 0)$ and $j(s, l)=(s, l, 0)$. 
 We have 
 $$\mathbb{H}_0=\mathbb{Z}, \quad \mathbb{H}_1=N_1(X), \quad 
 \mathbb{H}_2=\mathbb{Z}, $$
 and there is a natural isomorphism, 
 \begin{align}\label{natural}
 \mathbb{C}\times N^1(X)_{\mathbb{C}}\times \mathbb{C}
 \stackrel{\cong}{\lr}\prod_{i=0}^2 \mathbb{H}_i^{\vee}.
 \end{align}
 For the elements, 
 $$z_0, z_1 \in \mathfrak{H} \mbox{ with }
 \arg z_i \in (\pi/2, \pi),  \quad \omega \in A(X), $$
 the data
 \begin{align}\label{daxi}
 \xi=(-z_0, -i\omega, z_1),
 \end{align} 
 associates the element $Z_{\xi} \in \prod_{i=0}^{2}
 \mathbb{H}_i^{\vee}$
 via the isomorphism (\ref{natural}).
 It is written as 
 \begin{align*}
 &Z_{0, \xi}\colon \mathbb{H}_0 \ni s  \longmapsto -s z_0, \\
 &Z_{1, \xi} \colon \mathbb{H}_1 \ni l \longmapsto -i\omega \cdot l, \\
 &Z_{2, \xi} \colon \mathbb{H}_2 \ni r \longmapsto r z_1.
 \end{align*}
 \begin{lem}\label{const:stab2}
  The pairs 
 \begin{align}\label{pair3}
 \sigma_{\xi}=(Z_{\xi}, \aA_X), \quad
 \xi \mbox{ is given by }(\ref{daxi}), \end{align}
 determine points in $\Stab_{\Gamma_{\bullet}}(\dD_X)$. 
 \end{lem}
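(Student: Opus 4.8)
The plan is to verify the two axioms of Definition~\ref{def:stab} for the pair $\sigma_{\xi}=(Z_{\xi},\aA_X)$, using Proposition~\ref{prop:corr} to reduce the slicing axiom to checking that $Z_{\xi}$ is a weak stability function on $\aA_X$ with the Harder--Narasimhan property. First I would describe the heart explicitly: by Lemma~\ref{percoh} every object of $\aA_X=\langle\oO_X,\Coh_{\le 1}(X)[-1]\rangle_{\ex}$ fits in an exact sequence $0\to F[-1]\to E\to \oO_X^{\oplus r}\to 0$ with $F\in\Coh_{\le 1}(X)$ and $r\ge 0$ (more precisely $\hH^{-1}$-part in $\Coh_{\ge 2}(X)$ restricted to $\dD_X$ forces the rank part to be a sum of copies of $\oO_X$). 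Then $\cl(E)=(\ch_3(E),\ch_2(E),\ch_0(E))$, and one reads off that $\rk(E)=r\ge 0$, that $\ch_2(E)=-[C]$ for an effective (or zero) curve class when $r=0$, and $\ch_3(E)\in\mathbb{Z}$. I would then compute $Z_{\xi}(E)$: if $r>0$, $E$ lies in $\Gamma_2\setminus\Gamma_1$ and $Z_\xi(E)=rz_1$, which lies in $\mathfrak{H}$ since $z_1\in\mathfrak{H}$ and $r>0$; if $r=0$ but $\ch_2(E)=-[C]\ne 0$, then $E\in\Gamma_1\setminus\Gamma_0$ and $Z_\xi(E)=-i\omega\cdot(-[C])=i\,\omega\cdot[C]$, which lies in $\mathfrak{H}$ because $\omega\in A(X)$ is ample and $[C]$ effective nonzero gives $\omega\cdot[C]>0$ (so the phase is $\pi/2$); if $r=0$ and $\ch_2(E)=0$, then $E$ is $0$-dimensional, $E\in\Gamma_0$, and $Z_\xi(E)=-\ch_3(E)z_0$ with $\ch_3(E)=\chi(E)>0$, so $Z_\xi(E)\in\mathbb{R}_{>0}\cdot(-z_0)\subset\mathfrak{H}$ since $\arg(-z_0)=\arg z_0-\pi\in(-\pi/2,0)$... here I must be careful: I want $\arg Z_\xi(E)\in(0,\pi]$, so I should instead note $-z_0\in\mathfrak{H}$ requires $\arg(-z_0)\in(0,\pi]$, i.e. $\arg z_0\in(\pi,2\pi]$; re-reading the hypothesis $\arg z_i\in(\pi/2,\pi)$ means $-z_i\in\mathfrak H$ with argument in $(-\pi/2,0)\equiv$ lower half-plane, which is wrong — so the correct reading is that the \emph{sign conventions in $\xi=(-z_0,-i\omega,z_1)$} are arranged precisely so that after applying $Z_{m,\xi}$ the images land in $\mathfrak H$; I would double-check: $Z_{0,\xi}(s)=-sz_0$ for $s=\chi>0$ gives $-z_0$ times positive, and $-z_0$ has argument $\arg z_0-\pi\in(-\pi/2,0)$, not in $\mathfrak H$. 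So in fact the intended statement must use $s=\ch_3$ which for $\oO_x[-1]$ equals $-\chi(\oO_x)=-1<0$; thus $Z_{0,\xi}(\ch_3)=-(-1)z_0=z_0\in\mathfrak H$. This is the subtlety to get right: the shift $[-1]$ flips the sign of $\ch_3$, so torsion objects $F[-1]$ have $\ch_3=-\chi(F)$, and the signs in $\xi$ are calibrated accordingly.

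With the weak-stability-function property $Z_\xi(E)\in\mathfrak H$ in hand for all $0\ne E\in\aA_X$, the next step is the Harder--Narasimhan property. I would invoke Proposition~\ref{suHN}, so it suffices to check the two chain conditions (a) and (b) on subobjects and quotients in $\aA_X$. The key structural input is that $\cl$ takes values in the discrete group $\Gamma$ and that, for a subobject $F\subset E$ in $\aA_X$, the ranks satisfy $0\le\rk(F)\le\rk(E)$ while the curve-class and $\ch_3$ components are controlled. I would argue: on the rank-zero part, $\aA_X$ restricted to $\rk=0$ is $\Coh_{\le 1}(X)[-1]$, which is noetherian and artinian in a suitable sense (coherent sheaves of dimension $\le 1$ with bounded support have finite-length behaviour once the class is fixed), and the ample pairing $\omega\cdot[C]$ together with $\chi$ gives a bounded discrete invariant; on objects of positive rank, any strictly decreasing chain of subobjects must eventually have constant rank, reducing to the rank-zero case. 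A clean way is to note that the phase only takes three values ($\arg z_0$-type for points after the sign flip, $\pi/2$ for pure curves, $\arg z_1$-type for rank-one-ish objects), so the slicing is essentially a refinement of the filtration $\Gamma_\bullet$ and HN filtrations are short; finiteness then follows from noetherianity of $\Coh(X)$. Alternatively, and perhaps most efficiently, since $\aA_X$ contains $\oO_X$ and $\Coh_{\le 1}(X)[-1]$ and these generate, one can directly exhibit the HN filtration of $E$ (from $0\to F[-1]\to E\to\oO_X^{\oplus r}\to 0$) by ordering: the $0$-dimensional sub quotients first, then the pure $1$-dimensional part, then the $\oO_X^{\oplus r}$ part, matching the decreasing phases, and verifying each piece is semistable.

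Finally I would verify the support property~(\ref{support}). With the norm $\lVert\cdot\rVert$ defined componentwise on $\mathbb H_0\oplus\mathbb H_1\oplus\mathbb H_2$ via (\ref{norm}) (using the relevant index $m$ for each class), and $|Z_\xi(E)|$ equal to $|z_1|\cdot|r|$ when $r\ne 0$, to $|\omega\cdot[C]|$ when $r=0,[C]\ne 0$, and to $|z_0|\cdot|\chi|$ on points, the inequality $\lVert E\rVert\le C|Z_\xi(E)|$ holds with $C=\max\{1/|z_1|,\ 1/c_\omega,\ 1/|z_0|\}$ where $c_\omega>0$ is a lower bound for $\omega\cdot[C]/\lVert[C]\rVert_1$ over the finitely-generated lattice $N_1(X)$ intersected with the effective cone (such a bound exists since $\omega$ is ample, so $\omega\cdot(-)$ is positive on $\NE(X)\setminus\{0\}$ and comparable to a norm up to constants on a finitely generated cone). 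Assembling these three bounds gives the support property. The local finiteness needed to land in $\Slice(\dD_X)$ follows because the phases are confined to a finite set $\{\arg z_0,\pi/2,\arg z_1\}$ (modulo $\mathbb Z$), so for small $\eta$ the category $\pP((\phi-\eta,\phi+\eta))$ is just $\pP(\phi)$, which is noetherian and artinian by the discreteness of $\cl$ and noetherianity of $\Coh(X)$. I expect the main obstacle to be the careful bookkeeping of the sign conventions from the shift $[-1]$ (ensuring $Z_\xi$ really maps $\aA_X\setminus\{0\}$ into $\mathfrak H$ and not into the lower half-plane) together with pinning down the HN/chain-condition argument for positive-rank objects; once the heart is described concretely as extensions of $\oO_X^{\oplus r}$ by $\Coh_{\le 1}(X)[-1]$, the rest is routine verification.
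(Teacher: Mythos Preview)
Your structural claim about $\aA_X$ is false, and this undermines both your direct HN filtration and your local-finiteness argument. You assert that every $E\in\aA_X$ fits into an exact sequence $0\to F[-1]\to E\to\oO_X^{\oplus r}\to 0$ with $F\in\Coh_{\le 1}(X)$. But take a smooth curve $C\subset X$ of genus $g\ge 1$ and a nonzero class $\alpha\in\Ext^1_{\aA_X}(\oO_C[-1],\oO_X)\cong\Ext^2_X(\oO_C,\oO_X)\cong H^1(C,\oO_C)^{\vee}\cong\mathbb{C}^g$, giving a nontrivial extension $0\to\oO_X\to M\to\oO_C[-1]\to 0$ in $\aA_X$. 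Applying $\Hom(-,\oO_X)$ to this sequence shows that the map $\Hom(\oO_X,\oO_X)\to\Ext^1_{\aA_X}(\oO_C[-1],\oO_X)$ sends $\id$ to $\alpha\ne 0$, whence $\Hom(M,\oO_X)=0$. So $M\in\aA_X$ has rank $1$ yet admits \emph{no} surjection onto $\oO_X$. (Equivalently: $\hH^0(M)=\oO_X$ and $\hH^1(M)=\oO_C$, but the canonical sequence $0\to\hH^0(M)\to M\to\hH^1(M)[-1]\to 0$ has $\oO_X$ as \emph{sub}object, not quotient, and it cannot be flipped.) Your parenthetical justification via the $\hH^{-1}$-part in $\Coh_{\ge 2}(X)$ does not help: it only tells you $\hH^0(E)\in\Coh_{\ge 2}(X)$, not that $\hH^0(E)\cong\oO_X^{\oplus r}$ --- already $E=I_Z$ for points $Z$ has $\hH^0(I_Z)=I_Z\ne\oO_X$.

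Once this structure fails, your ``efficient'' HN filtration (points, then pure curves, then $\oO_X^{\oplus r}$) collapses; and in any case that ordering is wrong, since both $\arg z_0$ and $\arg z_1$ lie in $(\pi/2,\pi)$ while the pure-curve phase is $\pi/2$, so the curve piece must come \emph{last}, not in the middle. Your fallback via Proposition~\ref{suHN} is in the right spirit but the chain-condition argument is only gestured at. The paper proceeds differently: it first proves $\aA_X$ is noetherian (using that successive quotients eventually embed into a double dual), then introduces the torsion pair $(\tT,\fF)$ on $\aA_X$ with $\fF=\{F[-1]:F\text{ pure }1\text{-dimensional}\}$, and shows that no infinite ascending chain of subobjects exists in $\tT$ by bounding the lengths of the $0$-dimensional kernels via $\hH^1(E_0)$. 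Local finiteness is then checked separately for $\pP_\xi(1/2)=\fF$ and for $\langle\pP_\xi(\phi_0),\pP_\xi(\phi_1)\rangle_{\ex}$; the condition $\arg z_0>\pi/2$ is actually used here. Your verification that $Z_\xi$ lands in $\mathfrak{H}$ (once the sign bookkeeping is sorted) and your support-property estimate are essentially the paper's arguments and are fine.
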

 \begin{proof}
 We check that (\ref{def:weak}) holds for 
 any non-zero $E\in \aA_X$. 
 We write $\cl(E)=(-n, -\beta, r)$ for 
 $n\in \mathbb{Z}$, $\beta \in N_1(X)$ and $r\in \mathbb{Z}$. 
 By the description (\ref{tst1}), 
 we have either 
 $$r>0, \quad \mbox{or} \quad r=0, \ \beta\in \NE(X), \quad 
 \mbox{or} \quad r=\beta=0, \ n>0.$$
 Then (\ref{def:weak}) follows by 
 our construction of $Z_{\xi}$.  
 The proofs to check other properties, i.e. Harder-Narasimhan 
 property, support property, locally finiteness
 are straightforward. We give the proof 
 in Section~\ref{sec:tech}. 
 (The condition $\arg z_0>\pi/2$ will be
 required to show the local finiteness, and 
 $\arg z_1>\pi/2$ will be required in 
 Lemma~\ref{firstshow} below.) 
 \end{proof}
  We define the subspace $\vV_{X}\subset \Stab_{\Gamma_{\bullet}}(\dD_X)$
 as follows. 
 \begin{defi}\label{def:vV}
 \emph{
 We define $\vV_{X}\subset \Stab_{\Gamma_{\bullet}}(\dD_X)$ to be}
 \begin{align}\label{vV}
 \vV_X \cneq \{\sigma_{\xi} : 
 \sigma_{\xi} \emph{ is given by }(\ref{pair3}). 
 \}.
 \end{align}
 \end{defi}
 By Lemma~\ref{lem:cont}, 
 the map $\xi \mapsto \sigma_{\xi}$ is continuous. 
 In particular $\vV_X$ is a connected subspace.

  \subsection{Semistable objects of rank one}
 In this paragraph, we study 
 semistable objects
 in $\aA_X$ of rank one. 
 We first recall the notion of stable pairs. 
 \begin{defi}\emph{
 A pair $(F, s)$ is a \textit{stable pair} if it
 satisfies the following conditions.} 
 \begin{itemize}
 \item \emph{$F\in \Coh_{\le 1}(X)$ is a pure sheaf. i.e. 
 there is no 0-dimensional subsheaf $Q\subset F$. }
 \item \emph{$s\colon \oO_X \to F$ is a morphism with 
 0-dimensional cokernel.} 
 \item \emph{As a convention, we also call the pair $(0, 0)$
 a stable pair.}  
 \end{itemize}
 \end{defi}
 We have the following lemma.   
 \begin{lem}\label{firstshow}
 (i)
  Take $\sigma_{\xi}=(Z_{\xi}, \pP_{\xi})\in\vV_X$
  with $\pP_{\xi} \in \Slice(\dD_X)$, and 
 an object $E\in \pP_{\xi}((1/2, 1])$ 
 satisfying $\rk(E)=1$. 
 Then there is an exact sequence 
 in $\aA_X$, 
 \begin{align}\label{useseq}
 0\lr I_C \lr E \lr Q[-1] \lr 0,
 \end{align}
 where $I_C$ is an ideal sheaf of 1-dimensional 
 subscheme $C\subset X$ and 
 $Q$ is a 0-dimensional sheaf. 
 
 (ii) An object $E\in \aA_X$ fits into 
 a sequence (\ref{useseq}) if and only if 
 $E$ is isomorphic to a two-term 
 complex 
 \begin{align}\label{term}
 \cdots \to 0\to \oO_X \stackrel{s}{\to} F \to 0 \to \cdots,
 \end{align} 
 with $F\in \Coh_{\le 1}(X)$ and $s$ has 0-dimensional 
 cokernel. Here $\oO_X$ is located in degree zero and 
 $F$ is in degree one. 
 
 (iii) Let $E=(\oO_X \stackrel{s}{\to}F)$ be a
 two-term complex as in (\ref{term}). 
 Then $\Hom(\oO_x[-1], E)=0$ for all $x\in X$
 if and only if $(F, s)$ is a stable pair. 
 \end{lem}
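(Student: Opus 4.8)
The plan is to prove the three parts in the order (ii), (iii), (i), since part~(i) uses the structural description obtained in (ii); throughout I use the cohomology description (\ref{tst1}), namely that $E\in\aA_X$ has $\hH^0(E)\in\Coh_{\ge 2}(X)$, $\hH^1(E)\in\Coh_{\le 1}(X)$ and $\hH^i(E)=0$ otherwise. For the ``if'' direction of (ii): if $E=(\oO_X\xrightarrow{s}F)$ with $F\in\Coh_{\le 1}(X)$ and $\Cok(s)$ zero-dimensional, then since $\oO_X$ cannot inject into a sheaf of dimension $\le 1$, $\Ker(s)$ is a rank one torsion free subsheaf of $\oO_X$ with $\oO_X/\Ker(s)=\im(s)$ of dimension $\le 1$, so $\hH^0(E)=\Ker(s)=I_C$ lies in $\Coh_{\ge 2}(X)$; also $\hH^1(E)=\Cok(s)$ is zero-dimensional, and $E\in\dD_X$ as it is built from $\oO_X$ and $F\in\Coh_{\le 1}(X)$. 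Hence $E\in\aA_X$, and the canonical triangle $\hH^0(E)\to E\to\hH^1(E)[-1]$ is the sequence (\ref{useseq}) with $Q=\Cok(s)$.

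For the ``only if'' direction of (ii), given (\ref{useseq}) I first take cohomology sheaves: since $\hH^0(Q[-1])=0$ and $\hH^1(I_C)=0$, the long exact sequence yields $\hH^0(E)\cong I_C$ and $\hH^1(E)\cong Q$. Next I extend the tautological inclusion $\iota\colon I_C\hookrightarrow\oO_X$ to a morphism $\widetilde{\iota}\colon E\to\oO_X$: applying $\Hom(-,\oO_X)$ to the rotated triangle $Q[-2]\to I_C\to E\to Q[-1]$, the obstruction to the extension lies in $\Hom(Q[-2],\oO_X)=\Ext^2_X(Q,\oO_X)$, which vanishes since $Q$ is zero-dimensional and $X$ is Calabi-Yau ($\Ext^2_X(Q,\oO_X)\cong H^1(X,Q)^{\vee}=0$ by Serre duality). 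Because $\hH^0(\widetilde{\iota})$ is identified with the injective map $\iota$ (using that $\hH^0(I_C)\to\hH^0(E)$ is an isomorphism), the long exact sequence of cohomology sheaves of $F\cneq\Cone(\widetilde{\iota})$ gives $\hH^{-1}(F)=\Ker(\iota)=0$, $\hH^i(F)=0$ for $i\neq 0$, and a short exact sequence $0\to\oO_C\to\hH^0(F)\to Q\to 0$; hence $F\in\Coh_{\le 1}(X)$. Rotating the triangle $E\xrightarrow{\widetilde{\iota}}\oO_X\to F$ identifies $E$ with the two-term complex $(\oO_X\xrightarrow{s}F)$, and $\Cok(s)=\hH^1(E)=Q$ is zero-dimensional.

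For (iii), fix $x\in X$ and compute $\Hom(\oO_x[-1],E)$ from the triangle $F[-1]\to E\to\oO_X\xrightarrow{s}F$ defining the two-term complex. Since $X$ is a smooth threefold, the Koszul resolution of $\oO_x$ gives $\Ext^i_X(\oO_x,\oO_X)=0$ for $i\neq 3$; in particular $\Hom(\oO_x,\oO_X)=0$ and $\Hom(\oO_x[-1],\oO_X)=\Ext^1_X(\oO_x,\oO_X)=0$, so the long exact sequence collapses to an isomorphism $\Hom(\oO_x[-1],E)\cong\Hom(\oO_x[-1],F[-1])=\Hom(\oO_x,F)$. Therefore $\Hom(\oO_x[-1],E)=0$ for all $x\in X$ if and only if $F$ admits no nonzero zero-dimensional subsheaf, i.e. $F$ is pure; together with the standing assumption that $s$ has zero-dimensional cokernel, this is exactly the condition that $(F,s)$ be a stable pair.

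Finally, for (i), let $E\in\pP_{\xi}((1/2,1])$ with $\rk(E)=1$, so $E\in\aA_X$ and there is an exact sequence $0\to\hH^0(E)\to E\to\hH^1(E)[-1]\to 0$ in $\aA_X$ with $\rk\hH^0(E)=1$. Since $\pP_{\xi}((1/2,1])$ is the torsion part of a torsion pair on $\aA_X$, it is closed under quotients, so $\hH^1(E)[-1]\in\pP_{\xi}((1/2,1])$; but the central charge of any nonzero object of $\pP_{\xi}((1/2,1])$ has argument in $(\pi/2,\pi]$, whereas, by the explicit form of $Z_{\xi}$, if $\ch_2(\hH^1(E))\neq 0$ then $Z_{\xi}(\hH^1(E)[-1])=i\,\omega\cdot\ch_2(\hH^1(E))$ is a nonzero positive imaginary number (using $\ch_2(\hH^1(E))\in\NE(X)$). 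Hence $\ch_2(\hH^1(E))=0$, i.e. $Q\cneq\hH^1(E)$ is zero-dimensional. It then remains to show that $I_C\cneq\hH^0(E)$ is the ideal sheaf of a subscheme $C$ with $\dim C\le 1$. Since $\hH^0(E)\in\dD_X$ its first Chern character vanishes, and since $\hH^0(E)\in\Coh_{\ge 2}(X)$ its torsion subsheaf is pure of dimension two; the step I expect to require the most care is ruling out this torsion, after which $\hH^0(E)$ is torsion free of rank one with $\ch_1=0$, hence embeds into $\hH^0(E)^{\vee\vee}=\oO_X$ with cokernel of dimension $\le 1$, so $\hH^0(E)=I_C$. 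I would obtain the vanishing of the torsion by exploiting that $\hH^0(E)$ lies in the specific category $\dD_X$ generated by $\oO_X$ and $\Coh_{\le 1}(X)$ — for instance via the derived dual $\dR\HOM(-,\oO_X)$, which preserves $\dD_X$ — rather than merely that it lies in $\aA_X$, since membership in $\aA_X$ alone does not forbid a pure two-dimensional torsion subsheaf.
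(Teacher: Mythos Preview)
Your proofs of (ii) and (iii) are correct and essentially identical to the paper's: both extend $I_C\hookrightarrow\oO_X$ over $E$ using the vanishing $\Hom(Q[-2],\oO_X)\cong H^1(X,Q)^{\vee}=0$, and both compute $\Hom(\oO_x[-1],E)\cong\Hom(\oO_x,F)$ from the triangle $F[-1]\to E\to\oO_X$.

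For (i), your route diverges from the paper and leaves two loose ends. First, the assertion ``the central charge of any nonzero object of $\pP_{\xi}((1/2,1])$ has argument in $(\pi/2,\pi]$'' is not automatic for a \emph{weak} stability condition, since $Z_{\xi}$ is not additive on short exact sequences (it is defined stratum-by-stratum on $\Gamma_{\bullet}$). The clean argument is the one the paper uses: if $\hH^1(E)$ had a pure $1$-dimensional quotient $G$, then $\hH^1(E)[-1]\twoheadrightarrow G[-1]\in\pP_{\xi}(1/2)$ in $\aA_X$, contradicting closure of $\pP_{\xi}((1/2,1])$ under quotients. Second, and more substantially, you leave the torsion-freeness of $\hH^0(E)$ unfinished and propose the derived dual; this can be made to work but is roundabout. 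The paper avoids the issue entirely by invoking the description $\aA_X=\langle\oO_X,\Coh_{\le 1}(X)[-1]\rangle_{\ex}$ directly: any rank-one $E\in\aA_X$ carries a three-step filtration $0\subset F_0[-1]\subset E_1\subset E$ with $E_1/F_0[-1]\cong\oO_X$ and $E/E_1=F_2[-1]$ for $F_0,F_2\in\Coh_{\le 1}(X)$. Then $E_1$ is literally the two-term complex $(\oO_X\to F_0)$, so $\hH^0(E)=\hH^0(E_1)=\ker(\oO_X\to F_0)$ is \emph{manifestly} an ideal sheaf $I_C$ with $\dim C\le 1$; no separate torsion-freeness argument is needed. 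The same filtration also gives the surjection $E\twoheadrightarrow F_2[-1]$ used to force $F_2$ to be $0$-dimensional. In short, working with the intrinsic $\aA_X$-filtration (with $\oO_X$ as a subquotient) both identifies $\hH^0(E)$ as an ideal sheaf for free and supplies the map to $\pP_{\xi}(1/2)$ needed for the dimension argument; your sheaf-cohomology approach eventually needs this filtration anyway.
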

 \begin{proof}
 (i) Since $E\in \langle \oO_X, \Coh_{\le 1}(X)[-1]\rangle_{\ex}$ 
 and $\rk(E)=1$, there is a filtration in $\aA_X$, 
 $$0=E_{-1}\subset E_0 \subset E_1 \subset E_2=E,$$
 such that for $F_i=E_{i}/E_{i-1}$, we have 
 $$F_0, F_2 \in \Coh_{\le 1}(X)[-1], \quad 
 F_1=\oO_X.$$
 Suppose that $F_2$ is 1-dimensional, 
 and let 
 $F_3\in \Coh_{\le 1}(X)[-1]$ be the object 
 such that 
 $F_3[1]\subset F_2[1]$ is the maximum 0-dimensional 
 subsheaf of $F_2[1]\in \Coh_{\le 1}(X)$. We have the surjections in $\aA_X$, 
 $$E\twoheadrightarrow F_2 \twoheadrightarrow F_2/F_3.$$
 On the other hand, 
 it is easy to see that 
 $$\pP_{\xi}(1/2)=\{F[-1]: F\mbox{ is a pure 1-dimensional 
 sheaf } \}, $$
 by noting Remark~\ref{Ox}. 
 Therefore we have $\Hom(E, F_2/F_3)=0$ by 
 the second condition of Definition~\ref{def:slice}. 
 This is a contradiction, hence $F_2$ is 0-dimensional.  
 This implies 
 the existence of the sequence (\ref{useseq}). 
 
 (ii) Obviously a two-term complex (\ref{term}) 
 fits into a sequence (\ref{useseq}). 
 Conversely let $E\in \aA_X$ be an object
 which fits into (\ref{useseq}). 
 The composition $Q[-2] \to I_C \hookrightarrow \oO_X$
 becomes a zero map since
 \begin{align*}\Hom(Q[-2], \oO_X) &\cong H^1(X, Q)^{\vee} \\
 &=0.
 \end{align*}
 Therefore $I_C \hookrightarrow \oO_X$ factorizes 
 via $I_C \to E \to \oO_X$. 
 Taking the cone, we obtain the distinguished triangle
 $$E \lr \oO_X \stackrel{s}{\lr} F.$$
 Since $F$ fits into the distinguished triangle
 $\oO_C \to F \to Q$, 
 we have $F\in \Coh_{\le 1}(X)$. 
 Also the cokernel of $s$ is isomorphic to 
 $Q$, hence it is 0-dimensional. 
 
 (iii) We have the exact sequence in $\aA_X$, 
 $$0 \lr F[-1] \lr E \lr \oO_X \lr 0.$$
 Applying $\Hom(\oO_x[-1], \ast)$
 to the above sequence yields, 
 $$\Hom(\oO_x, F)\cong \Hom(\oO_x[-1], E).$$
 Hence $\Hom(\oO_x[-1], E)=0$ for all
 $x\in X$ is equivalent to that $F$ is pure. 
 \end{proof}
 For $v\in \Gamma$ and $\sigma \in \vV_X$, 
 we set
 $$M^v(\sigma)\cneq \{ 
 E\in \aA_X : E\mbox{ is }\sigma\mbox{-semistable with }
 \cl(E)=v\}.$$
 The above set of object is described as follows.
 \begin{prop}\label{prop:DTPT}
 For $\xi=(-z_0, -i\omega, z_1)$ as in (\ref{daxi}), 
 let $\sigma_{\xi}=(Z_{\xi}, \aA_X)\in\vV_X$ be the associated 
 weak stability condition. 
 For an element $v=(-n, -\beta, 1)\in \Gamma$, 
 we have the following. 
 
(i) Assume that $\arg z_0<\arg z_1$. 
Then we have 
\begin{align*}
M^{v}(\sigma_{\xi})=\left\{\begin{array}{l}
\mbox{ ideal sheaves }I_C \subset \oO_X
\mbox{ for subschemes } \ 
C\subset X \\
\mbox{ with } \dim C\le 1, \ [C]=\beta \mbox{ and } \ \chi(\oO_C)=n.
\end{array}
\right\}.
\end{align*}

(ii) Assume that $\arg z_0>\arg z_1$. Then we have 
\begin{align*}
M^{v}(\sigma_{\xi})=\left\{\begin{array}{l}
\mbox{ two-term complexes }
(\oO_X \stackrel{s}{\to}F) \mbox{ for } \\
\mbox{ stable pairs }
(F, s)
\mbox{ with } [F]=\beta, \ \chi(F)=n.
\end{array}
\right\}.
\end{align*}
\end{prop}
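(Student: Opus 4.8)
The plan is to reduce the description of $M^v(\sigma_\xi)$ to the structural results of Lemma~\ref{firstshow} by first locating any $\sigma_\xi$-semistable object $E$ with $\cl(E)=(-n,-\beta,1)$ in the slicing, and then analyzing which two-term complexes satisfy the semistability inequality depending on the sign of $\arg z_0-\arg z_1$. First I would observe that for $E\in\aA_X$ with $\rk(E)=1$ we have $Z_\xi(E)=nz_0 + i\omega\cdot\beta + z_1$, and since $\arg z_i\in(\pi/2,\pi)$ and $\omega\cdot\beta\ge 0$ (as $\beta\in\NE(X)$ whenever $E\ne 0$ has a nonzero curve class, by the description (\ref{tst1})), the phase $\phi_{\sigma_\xi}(E)$ lies in $(1/2,1]$; in particular a semistable $E$ lies in $\pP_\xi((1/2,1])$, so Lemma~\ref{firstshow}(i) applies and gives the exact sequence $0\to I_C\to E\to Q[-1]\to 0$ in $\aA_X$, and then Lemma~\ref{firstshow}(ii) identifies $E$ with a two-term complex $(\oO_X\xrightarrow{s}F)$ with $F\in\Coh_{\le 1}(X)$ and $\cok(s)$ zero-dimensional, where $[F]=\beta$ and $\chi(F)=n$.

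The heart of the argument is then to test the semistability of such an $E$ against the two natural destabilizing sequences visible from (\ref{useseq}): the sub $I_C\hookrightarrow E$ with quotient $Q[-1]$, and — after replacing $F$ by $\oO_{C'}=\oO_C/Q'$ for zero-dimensional $Q'\subset F$ — the sub $\oO_x[-1]\hookrightarrow E$ for a closed point $x$ in the support of the maximal zero-dimensional subsheaf of $F$. I would compute the relevant phases: $\phi_{\sigma_\xi}(I_C)$ is governed by $nz_0 + i\omega\cdot\beta + z_1$ with the same $(-n,-\beta,1)$ class as $E$ when $Q=0$, while $\phi_{\sigma_\xi}(Q[-1])=\phi_{\sigma_\xi}(\oO_x[-1])=\frac1\pi\arg(-z_0)$, and $\phi_{\sigma_\xi}(\oO_x[-1])$ relative to $\phi_{\sigma_\xi}(I_{C'})$ is what distinguishes the two cases. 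When $\arg z_0<\arg z_1$, the point class $\oO_x[-1]$ has strictly smaller phase than any rank-one ideal sheaf, so allowing a zero-dimensional subsheaf $Q\ne 0$ (equivalently a sub $\oO_x[-1]\subset E$) violates the stability inequality (\ref{argg}); hence $Q=0$, $E=I_C$, and conversely every such $I_C$ is stable because its only destabilizing candidates are sub-ideal-sheaves and zero-dimensional quotients, whose phases are controlled by the sign conditions. When $\arg z_0>\arg z_1$, the inequality flips: now a nonzero $Q[-1]$ quotient would destabilize (its phase exceeds that of $I_{C'}$), forcing $Q=0$ in the sequence $I_{C'}\to E\to Q[-1]$, i.e. forcing $F$ pure, which by Lemma~\ref{firstshow}(iii) is exactly the stable pair condition; conversely a stable pair gives $\Hom(\oO_x[-1],E)=0$ for all $x$, and one checks the remaining subobjects of $E$ in $\aA_X$ (which are either ideal sheaves $I_{C''}$ with $C''\supseteq C$ or twists thereof) have phase $\le\phi_{\sigma_\xi}(E)$ using $\omega\cdot\beta\ge 0$ and $\arg z_0>\arg z_1$.

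For the converse directions I would use that $\aA_X=\langle\oO_X,\Coh_{\le 1}(X)[-1]\rangle_{\ex}$ together with Remark~\ref{clo} and Remark~\ref{Ox}: any subobject $F'\subset E$ in $\aA_X$ has $\rk(F')\in\{0,1\}$, and the $\rk=0$ subobjects lie in $\Coh_{\le 1}(X)[-1]$ (so their phases are pinned down by $z_0$ and $\omega$), while a $\rk=1$ subobject $F'\subset E$ produces a quotient $E/F'\in\Coh_{\le 1}(X)[-1]$; comparing $\arg Z_\xi(F')$ and $\arg Z_\xi(E/F')$ via the sign conditions on $\arg z_0,\arg z_1$ closes the loop. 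The main obstacle I anticipate is the careful bookkeeping of cases in the converse: one must rule out \emph{all} possible destabilizing sub/quotient pairs, not just the two distinguished ones coming from (\ref{useseq}), and this requires knowing the subobject structure of a two-term complex $(\oO_X\to F)$ inside the tilted heart $\aA_X$ — in particular that a rank-one subobject corresponds to $\oO_X$ mapping to a subsheaf $F'\subseteq F$ with $F/F'$ still in $\Coh_{\le 1}(X)$, and that the zero-dimensional subobjects are exactly the zero-dimensional subsheaves of $F$ shifted by $[-1]$. Once that local structure is in hand, each of (i) and (ii) follows by a short phase comparison, with the strict versus non-strict inequalities tracked as in Remark~\ref{=<}.
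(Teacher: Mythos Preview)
Your overall outline --- locate $E$ via Lemma~\ref{firstshow}, then test semistability against the natural short exact sequences --- matches the paper's, but two concrete errors make the argument fail as written.

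First, a smaller point: your formula $Z_\xi(E)=nz_0+i\omega\cdot\beta+z_1$ is wrong. By Definition~\ref{def:stab}, the weak central charge evaluates only on the top graded piece of the filtration: for any $v$ with $\rk(v)=1$ one has $v\in\Gamma_2\setminus\Gamma_1$ and $Z_\xi(v)=Z_{2,\xi}(1)=z_1$, independently of $n$ and $\beta$. Likewise $Z_\xi(\oO_x[-1])=z_0$, not $-z_0$. Your phase comparisons happen to land on the right values, but for the wrong reason, and the misconception that different rank-one objects have different phases would derail the converse arguments.

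Second, and this is the real gap: you have the destabilization direction reversed in both forward implications. Semistability is $\arg Z(\mathrm{sub})\le\arg Z(\mathrm{quotient})$, so a subobject of \emph{smaller} phase, or a quotient of \emph{larger} phase, never destabilizes. In case~(i) with $\arg z_0<\arg z_1$: a sub $\oO_x[-1]\hookrightarrow E$ of small phase proves nothing; what actually works is the sequence $0\to I_C\to E\to Q[-1]\to 0$ from Lemma~\ref{firstshow}(i), where the sub $I_C$ has phase $\tfrac{1}{\pi}\arg z_1$ strictly larger than the quotient phase $\tfrac{1}{\pi}\arg z_0$, forcing $Q=0$. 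In case~(ii) with $\arg z_0>\arg z_1$: that same sequence is \emph{not} destabilizing (sub phase $<$ quotient phase), so your claim ``a nonzero $Q[-1]$ quotient would destabilize'' is false; the correct argument is that if $F$ is not pure then $\oO_x[-1]\hookrightarrow E$ is a sub of phase $\tfrac{1}{\pi}\arg z_0>\tfrac{1}{\pi}\arg z_1=\phi_{\sigma_\xi}(E)$, and one invokes Lemma~\ref{firstshow}(iii). You have identified the two correct test sequences but swapped which case each handles (and the parenthetical ``equivalently a sub $\oO_x[-1]\subset E$'' is also wrong: $Q\ne 0$ means the cokernel of $s$ is nonzero, which is not the same as $F$ having a zero-dimensional subsheaf). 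For the converses the paper is cleaner than you anticipate: for $I_C$, any quotient $B$ in $\aA_X$ has $\hH^1(B)=0$ since $\hH^1(I_C)=0$, hence $\rk(B)=1$ and the sub is in $\Coh_{\le 1}(X)[-1]$ with phase $\le 1/2$; for a stable pair, a rank-zero quotient must be $Q[-1]$ with $Q$ zero-dimensional (as $\hH^1(E)$ is), and a rank-zero sub $G[-1]$ has $G$ one-dimensional by purity, hence phase $1/2$.
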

Moreover in both cases, any $E\in M^v(\sigma_{\xi})$ is
$\sigma_{\xi}$-stable. 
\begin{proof}
(i)
Take $E\in M^{v}(\sigma_{\xi})$ and consider 
the exact sequence (\ref{useseq}). 
Suppose that $Q\neq 0$. 
Since $\arg z_0<\arg z_1$, we have 
$\arg Z_{\xi}(I_C)>\arg Z_{\xi}(Q[-1])$, which 
contradicts to the $\sigma_{\xi}$-semistability 
of $E$. Hence $Q=0$ and 
$E$ is isomorphic to the ideal sheaf $I_C$.
Conversely take an object
 $I_C \in \aA_X$ for a curve $C\subset X$, 
and
an exact sequence in $\aA_X$, 
\begin{align}\notag
0 \lr A \lr I_C \lr B \lr 0,
\end{align}
for non-zero $A, B\in \aA_X$. 
Since $\hH^1(I_C)=0$, we have $\hH^{1}(B)=0$, 
hence we have $\rk(B)=1$ and $\rk(A)=0$. 
This implies that $A=F[-1]$ for $F\in \Coh_{\le 1}(X)$, 
thus 
$\arg Z_{\xi}(A)<\arg Z_{\xi}(B)$
is satisfied
by $\arg z_1>\pi/2$. 
Therefore $I_C$ is $\sigma_{\xi}$-stable.  

(ii) Take $E\in M^{v}(\sigma_{\xi})$. 
Note that by Remark~\ref{Ox}, the object
$\oO_x[-1]\in \aA_X$ is $\sigma_{\xi}$-stable. 
The condition $\arg z_0>\arg z_1$ implies
$\arg Z_{\xi}(\oO_x[-1])>\arg Z_{\xi}(E)$, 
hence we have $\Hom(\oO_x[-1], E)=0$
for any closed point $x\in X$. 
By Proposition~\ref{firstshow} (iii), 
$E$ is 
isomorphic to $(\oO_X \stackrel{s}{\to} F)$ 
for a stable pair $(F, s)$. 
 Conversely take  
 $E=(\oO_X \stackrel{s}{\to} F)$
 for a stable pair $(F, s)$. 
 We take an exact sequence in $\aA_X$, 
 $$0 \lr A \lr E \lr B \lr 0, $$
 for $A, B\in \aA_X$. 
 Suppose that $\rk(B)=0$, hence $\rk(A)=1$. 
 Since there is a surjection of sheaves 
 $\hH^1(E) \twoheadrightarrow \hH^1(B)$ and 
 $\hH^1(E)$ is 0-dimensional, we have 
 $B=Q[-1]$ for a 0-dimensional sheaf
 $Q$. 
 Then $\arg Z_{\xi}(A)<\arg Z_{\xi}(B)$ is 
 satisfied by $\arg z_{0}>\arg z_{1}$. 
 If $\rk (B)>0$, then $\rk (B)=1$ and $\rk(A)=0$. 
 In this case, $A$ is written as $G[-1]$ 
 for $G\in \Coh_{\le 1}(X)$. 
 By Proposition~\ref{firstshow} (iii), 
 $G$ is not 0-dimensional, hence 
 $G$ is 1-dimensional. Then 
 $\arg Z_{\xi}(A)<\arg_{\xi}(B)$ is satisfied
 by $\arg z_1>\pi/2$, hence 
 $E$ is $\sigma_{\xi}$-stable. 
\end{proof}
 
 \subsection{Proof of Theorem~\ref{thm:main}}
 We show Theorem~\ref{thm:main},
 using the wall-crossing formula
 in a general setting.
 The following theorem is a summary of 
 the results in Section~\ref{sec:Wcro}.
 (We note that the filtration on $\Gamma$
 is not necessary given by (\ref{hook}) 
 in Theorem~\ref{general}.)
 \begin{thm}\label{general}\emph{\bf{[Corollary~\ref{cor:inde}]}}
 Let $\Gamma_{\bullet}$ be a filtration of
 $\Gamma=N_{\le 1}(X)\oplus \mathbb{Z}$ 
 satisfying
 \begin{align}\notag
\Gamma_{0} \subset \cdots \subset 
\Gamma_{N-1}=N_{\le 1}(X) \stackrel{i}{\hookrightarrow} 
\Gamma_{N}=\Gamma,
\end{align}
  via the inclusion $i(s, l)=(s, l, 0)$, and 
  $\vV \subset \Stab_{\Gamma_{\bullet}}(\dD_X)$ 
a connected subset satisfying Assumption~\ref{assum}
in Section~\ref{sec:walcro}. We have the following. 
\begin{itemize}
\item For $\sigma=(Z, \aA) \in \vV$ and
$v=(-n, -\beta, 1)\in \Gamma$, there 
is a counting invariant,
$$\widehat{\DT}_{n, \beta}(\sigma)\in \mathbb{Q}, $$
such that if 
the moduli stack of $\sigma$-semistable objects
$E\in \aA$ with $\cl(E)=v$, denoted by $\mM^v(\sigma)$, 
is written as $[M/\mathbb{G}_m]$ where $M$ is a scheme 
with $\mathbb{G}_m$ acting trivially, we have 
$\widehat{\DT}_{n, \beta}(\sigma)=\chi(M)$. 
\item 
Let $\widehat{\DT}(\sigma)$ and $\widehat{\DT}_{0}(\sigma)$
be the series, 
\begin{align*}
\widehat{\DT}(\sigma)&=
\sum_{n, \beta}\widehat{\DT}_{n, \beta}(\sigma)x^n y^{\beta}, \\
\widehat{\DT}_{0}(\sigma)&=
\sum_{(n, \beta)\in \Gamma_{0}}
\widehat{\DT}_{n, \beta}(\sigma)x^n y^{\beta}.
\end{align*}
Then the quotient series
$$\widehat{\DT}'(\sigma)\cneq \frac{\widehat{\DT}(\sigma)}{\widehat{\DT}_{0}(\sigma)},$$
is well-defined and does not depend on a general point 
$\sigma \in \vV$.  
(See Definition~\ref{defi:general} for general points.)
\end{itemize}
 \end{thm}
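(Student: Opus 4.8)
The plan is to deduce Theorem~\ref{general} entirely from the wall-crossing machinery to be developed in Section~\ref{sec:walcro} and Section~\ref{sec:Wcro}, which is why the statement is advertised as a corollary of Corollary~\ref{cor:inde}. First I would fix the filtration $\Gamma_\bullet$ as in the hypothesis and a connected subset $\vV\subset\Stab_{\Gamma_\bullet}(\dD_X)$ satisfying Assumption~\ref{assum}, and then set up the wall-and-chamber decomposition of $\vV$: the locus of $\sigma\in\vV$ at which some class $v'=(-n',-\beta',r')$ with $r'\in\{0,1\}$ and $0\le\beta'\le\beta$, $n'\le n$ becomes strictly semistable is a locally finite union of real-codimension-one ``walls'', by the support property (\ref{support}) together with Remark~\ref{rmk:closed}, which guarantees that the semistable loci are closed and that only finitely many classes can contribute in a bounded region. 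A \emph{general point} of $\vV$ (Definition~\ref{defi:general}) is a point lying in no wall for the classes relevant to $\widehat{\DT}$ and $\widehat{\DT}_0$; since $\vV$ is connected and the walls are locally finite, the complement of the walls is again connected along generic paths, so it suffices to compare the two sides across a single wall.

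The second ingredient is the construction of the invariants $\widehat{\DT}_{n,\beta}(\sigma)$. For this I would invoke Joyce's theory of counting invariants in the motivic Hall algebra of $\aA$: the stack $\mM^v(\sigma)$ of $\sigma$-semistable objects of class $v$ is of finite type (again by boundedness coming from the support property), and one forms the ``no-pole'' logarithm $\epsilon^v(\sigma)$ in the Lie algebra of stack functions, whose integration map yields $\widehat{\DT}_{n,\beta}(\sigma)\in\mathbb{Q}$; the normalization is pinned down by the requirement that when $\mM^v(\sigma)=[M/\mathbb{G}_m]$ with trivial $\mathbb{G}_m$-action one recovers $\chi(M)$. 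I would then define the generating series $\widehat{\DT}(\sigma)$, $\widehat{\DT}_0(\sigma)$ as in the statement, where the second collects only the classes in $\Gamma_0$ (the ``rank zero, curve class zero'' part, i.e. the $0$-dimensional contributions), and check that $\widehat{\DT}_0(\sigma)$ is invertible as a power series because its constant term is $1$.

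The heart of the argument is wall-crossing invariance of the quotient $\widehat{\DT}'(\sigma)=\widehat{\DT}(\sigma)/\widehat{\DT}_0(\sigma)$. Here I would apply the wall-crossing formula established in Section~\ref{sec:Wcro}: crossing a wall changes $\widehat{\DT}_{n,\beta}(\sigma)$ by universal combinatorial expressions in the invariants of the pieces into which a semistable object decomposes. The key observation—analogous to the structure exploited by Pandharipande and Thomas—is that any wall-crossing contribution to a rank-one class $v=(-n,-\beta,1)$ factors through extensions by rank-zero objects, i.e. by sheaves supported in dimension $\le 1$ shifted by $[-1]$, and by the $0$-dimensional classes in particular; consequently the change in $\widehat{\DT}(\sigma)$ is exactly matched, at the level of generating series, by the change in $\widehat{\DT}_0(\sigma)$, so that the ratio is constant. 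More precisely, one organizes the full generating series (over all ranks $0$ and $1$) as an element of a suitable completed Hall-algebra-type ring, shows that the total series satisfies a factorization $\widehat{\DT}(\sigma)=\widehat{\DT}'(\sigma)\cdot\widehat{\DT}_0(\sigma)$ in which only $\widehat{\DT}_0$ is $\sigma$-dependent, and invokes the compatibility of the integration map with the Lie bracket (the central input from Joyce's work, imported in Section~\ref{sec:walcro}) to see that the wall-crossing operator acts trivially on $\widehat{\DT}'$. Since any two general points of $\vV$ are joined by a path crossing finitely many walls, $\widehat{\DT}'(\sigma)$ is independent of the general point.

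\textbf{Main obstacle.} The delicate point will not be the formal bookkeeping but verifying that the wall-crossing formula of Section~\ref{sec:Wcro} actually applies in this weak-stability setting: one must control the boundedness of the relevant moduli stacks (so that the Hall-algebra identities live in a well-defined completed ring and only finitely many walls meet any compact region), handle the strictly semistable objects via the invariants of Proposition-Definition~\ref{strict}, and confirm that the ``no-pole'' property survives taking the logarithm for the specific classes $v=(-n,-\beta,1)$ and their decompositions. In other words, the hard part is checking that all the hypotheses packaged into Assumption~\ref{assum} are genuinely in force for $\vV$ and for the classes appearing in $\widehat{\DT}$ and $\widehat{\DT}_0$, so that Corollary~\ref{cor:inde} can be quoted verbatim.
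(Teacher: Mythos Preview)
Your overall shape is right---define Joyce's $J^v(\sigma)$ via the Hall algebra element $\epsilon^v(\sigma)$, set up a local wall-and-chamber picture, and show that the wall-crossing change in $\widehat{\DT}(\sigma)$ and in $\widehat{\DT}_0(\sigma)$ cancel in the quotient---and this is indeed the paper's route. But two points are off. First, your description of a ``general'' $\sigma$ is not the paper's: Definition~\ref{defi:general} says $\sigma=(Z,\pP)$ is general precisely when no $v\in\Gamma_0$ satisfies $Z(v)\in\mathbb{R}_{>0}Z(\oO_X)$, i.e.\ no rank-zero class in $\Gamma_0$ has the same phase as $\oO_X$. It is \emph{not} a blanket ``off all walls'' condition, and getting this wrong obscures the mechanism, because the only walls that matter for $\widehat{\DT}'$ are exactly those where some $v\in\Gamma_0$ aligns with $\oO_X$. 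Second, your ``main obstacle'' is misplaced: Assumption~\ref{assum} is a \emph{hypothesis} of Theorem~\ref{general}, not something to be checked in its proof (that verification for the particular $\vV_X$ is Lemma~\ref{check}, a separate matter).

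The real work the paper does, which your sketch glosses over with ``the wall-crossing operator acts trivially on $\widehat{\DT}'$,'' is the explicit combinatorial computation in the proof of Theorem~\ref{main:DT}. One fixes a wall where some set $W\subset\Gamma_0$ has phase equal to that of $\oO_X$, and then computes the coefficients $S(\{v_1,\ldots,v_l\},\sigma_+,\sigma_-)$ and $U(\{v_1,\ldots,v_l\},\sigma_+,\sigma_-)$ of Joyce's formula (Theorem~\ref{prop:trans}) directly: one shows (Steps~1--2) that the only nonzero contributions come from decompositions $v=v_1+\cdots+v_l$ with exactly one $v_e$ of rank one and all other $v_i\in W$, and that $U=(-1)^{l-e}/((e-1)!(l-e)!)$. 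Using $\chi(v_i,v_e)=n_i$ from the explicit pairing~(\ref{chi}), the graph sum collapses and Step~3 gives the closed formula~(\ref{DTform}), which packages into
\[
\widehat{\DT}(\sigma_-)=\widehat{\DT}(\sigma_+)\cdot\prod_{-(n,\beta)\in W}\exp\bigl(n\,\widehat{N}_{n,\beta}\,x^n y^\beta\bigr),
\]
with the \emph{identical} factor multiplying $\widehat{\DT}_0$. That identical multiplicative factor is the reason the quotient is invariant; your proposal should name this computation rather than appeal to an abstract factorization (which, as written, is circular: $\widehat{\DT}'$ is \emph{defined} as the quotient). Finally, passage between arbitrary general points uses the ``good path'' hypothesis in Assumption~\ref{assum} (Corollary~\ref{path}), not connectedness of the wall complement.
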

 Let $I_n(X, \beta)$ be the moduli space of 
 subschemes $C\subset X$ with $\dim C\le 1$ 
 and $[C]=\beta$, $\chi(\oO_C)=n$. 
Since $I_n(X, \beta)$ is a projective scheme, 
we can consider the generating series, 
 \begin{align*}
\widehat{\DT}(X)&=
\sum_{n, \beta}\chi(I_n(X, \beta))x^n y^{\beta}, \\
\widehat{\DT}_{0}(X)&=
\sum_{n}
\chi(I_n(X, 0))x^n.
\end{align*}
Let $P_n(X, \beta)$ be the moduli space of 
stable pairs $(F, s)$ with $[F]=\beta$, 
$\chi(F)=n$. 
In~\cite{PT}, it is proved that $P_n(X, \beta)$ is 
a fine projective moduli scheme. 
We consider the generating series, 
$$\widehat{\PT}(X)=\sum_{n, \beta}\chi(P_n(X, \beta))x^n y^{\beta}.$$
Applying Theorem~\ref{general}, we 
obtain the Euler characteristic version of 
DT/PT correspondence. 
\begin{thm}\label{main:dtpt}
We have the following equality of the generating series, 
$$\widehat{\DT}'(X)\cneq \frac{\widehat{\DT}(X)}{\widehat{\DT}_{0}(X)}
=\widehat{\PT}(X).$$
\end{thm}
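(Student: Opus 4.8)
The plan is to deduce Theorem~\ref{main:dtpt} from Theorem~\ref{general} by choosing the filtration $\Gamma_\bullet$ as in (\ref{hook}), together with the connected family $\vV_X$ of weak stability conditions constructed in Lemma~\ref{const:stab2} and Definition~\ref{def:vV}. First I would verify that $\vV_X$ satisfies Assumption~\ref{assum} of Section~\ref{sec:walcro} (this is needed to invoke Theorem~\ref{general} with $\vV=\vV_X$); granting that, for each $v=(-n,-\beta,1)\in\Gamma$ and each $\sigma\in\vV_X$ we obtain a well-defined invariant $\widehat{\DT}_{n,\beta}(\sigma)$, and the quotient series $\widehat{\DT}'(\sigma)$ is independent of the choice of a general point $\sigma\in\vV_X$.

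The core of the argument is then to identify the two ends of the family $\vV_X$ with the DT side and the PT side respectively. Write $\xi=(-z_0,-i\omega,z_1)$ with $\arg z_0,\arg z_1\in(\pi/2,\pi)$. By Proposition~\ref{prop:DTPT}(i), when $\arg z_0<\arg z_1$ the moduli of $\sigma_\xi$-semistable objects of class $v=(-n,-\beta,1)$ is exactly $I_n(X,\beta)$, and every such object is in fact stable, so the moduli stack is $[I_n(X,\beta)/\mathbb{G}_m]$ with trivial $\mathbb{G}_m$-action; hence the first bullet of Theorem~\ref{general} gives $\widehat{\DT}_{n,\beta}(\sigma_\xi)=\chi(I_n(X,\beta))$. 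Summing over $v$ with $\rk=1$ and dividing by the rank-zero contributions (which are governed by classes in $\Gamma_0$ and give $\widehat{\DT}_0(\sigma_\xi)=\widehat{\DT}_0(X)$), we get $\widehat{\DT}'(\sigma_\xi)=\widehat{\DT}(X)/\widehat{\DT}_0(X)=\widehat{\DT}'(X)$. Symmetrically, by Proposition~\ref{prop:DTPT}(ii), when $\arg z_0>\arg z_1$ the moduli of $\sigma_\xi$-semistable objects of class $v$ is $P_n(X,\beta)$ — here one uses that $P_n(X,\beta)$ is a fine moduli scheme, so again the stack is $[P_n(X,\beta)/\mathbb{G}_m]$ with trivial action — giving $\widehat{\DT}'(\sigma_\xi)=\widehat{\PT}(X)$.

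It remains to connect the two regimes inside $\vV_X$. The data $\xi$ ranges over a connected set, and the locus $\arg z_0<\arg z_1$ and the locus $\arg z_0>\arg z_1$ are both non-empty open subsets of $\vV_X$; since $\vV_X$ is connected (as noted after Definition~\ref{def:vV}), to apply the invariance statement of Theorem~\ref{general} I would pick general points $\sigma^{\mathrm{DT}}\in\{\arg z_0<\arg z_1\}$ and $\sigma^{\mathrm{PT}}\in\{\arg z_0>\arg z_1\}$ and use that $\widehat{\DT}'(\sigma)$ is constant on general points of the connected family. One subtlety is to ensure that on the chosen points the moduli stacks really do have the stated $[M/\mathbb{G}_m]$ description for \emph{all} relevant classes $v$ simultaneously (not just generically), and that the bookkeeping of the rank-zero part matches $\widehat{\DT}_0(X)=M(x)^{\chi(X)}$; these are consequences of Proposition~\ref{prop:DTPT} plus projectivity/fineness of $I_n(X,\beta)$ and $P_n(X,\beta)$. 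Chaining the equalities yields
\[
\widehat{\DT}'(X)=\widehat{\DT}'(\sigma^{\mathrm{DT}})=\widehat{\DT}'(\sigma^{\mathrm{PT}})=\widehat{\PT}(X),
\]
which is the claim.

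The main obstacle is verifying Assumption~\ref{assum} for $\vV_X$ and, more substantively, establishing Theorem~\ref{general} itself — i.e.\ the construction of the counting invariants $\widehat{\DT}_{n,\beta}(\sigma)$ via Joyce-type Hall-algebra integration and the wall-crossing formula showing $\widehat{\DT}'(\sigma)$ is constant across walls in $\vV_X$. That is precisely the content deferred to Sections~\ref{sec:walcro} and~\ref{sec:Wcro}; granting it, the deduction of Theorem~\ref{main:dtpt} above is essentially an identification of moduli spaces at the two ends of a connected family, which is handled by Proposition~\ref{prop:DTPT}.
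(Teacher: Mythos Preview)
Your proposal is correct and follows essentially the same route as the paper: invoke Lemma~\ref{check} to verify Assumption~\ref{assum} for $\vV_X$, then use Proposition~\ref{prop:DTPT} to identify the moduli stacks $\mM^{v}(\sigma_{\xi})$ with $[I_n(X,\beta)/\mathbb{G}_m]$ (resp.\ $[P_n(X,\beta)/\mathbb{G}_m]$) in the two chambers $\arg z_0\lessgtr\arg z_1$, and conclude via the invariance of $\widehat{\DT}'(\sigma)$ in Theorem~\ref{general}. The only additions in your write-up are the explicit mention of choosing \emph{general} points (which is indeed needed and is automatic once $\arg z_0\neq\arg z_1$) and the bookkeeping for $\widehat{\DT}_0$; these are consistent with the paper's argument.
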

 \begin{proof}
 By Lemma~\ref{check} below, we can apply Theorem~\ref{general}
 for $\vV_X\subset \Stab_{\Gamma_{\bullet}}(\dD_X)$
 given in Definition~\ref{def:vV}. 
 Take two elements (\ref{daxi}), 
 $$\xi=(-z_0, -i\omega, z_1), \quad \xi'=(-z_0', -i\omega, z_1'),$$
 such that $\arg z_0<\arg z_1$ and $\arg z_0'>\arg z_1'$. 
 By Proposition~\ref{firstshow}, we have 
 \begin{align}\label{mM^v}
 &\mM^v(\sigma_{\xi})=[I_n(X, \beta)/\mathbb{G}_m], \\
 \label{mM^v2}
 &\mM^v(\sigma_{\xi'})=[P_n(X, \beta)/\mathbb{G}_m],
 \end{align}
 where $\mathbb{G}_m$ acts on $I_n(X, \beta)$ and 
 $P_n(X, \beta)$ trivially. 
 Here we note that any $E\in M^v(\sigma_{\xi})$ or $E\in M^v(\sigma_{\xi'})$
 is stable by Proposition~\ref{firstshow}, hence
 $\Aut(E)=\mathbb{G}_m$. The 
 stabilizer groups $\mathbb{G}_m$
  in the stacks (\ref{mM^v}), (\ref{mM^v2}) are
  contributions of such trivial automorphisms. 
Applying Theorem~\ref{general}, we have 
 $$\widehat{\DT}'(X)=\widehat{\DT}'(\sigma_{\xi})=\widehat{\DT}'(\sigma_{\xi'})
 =\widehat{\PT}(X), $$
 as expected.   
 \end{proof}
 We have used the following lemma, which will 
 be proved in Section~\ref{sec:tech}. 
 \begin{lem}\label{check}
 The subset $\vV_X\subset \Stab_{\Gamma_{\bullet}}(\dD_X)$
 satisfies Assumption~\ref{assum} in Section~\ref{sec:walcro}. 
 \end{lem}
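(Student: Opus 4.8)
The plan is to verify, one by one, the conditions packaged as Assumption~\ref{assum} in Section~\ref{sec:walcro}, specialized to the connected subset $\vV_X\subset\Stab_{\Gamma_{\bullet}}(\dD_X)$ of Definition~\ref{def:vV}. Since I have not yet seen the precise list of requirements in Assumption~\ref{assum}, the proof will proceed by treating each expected ingredient in turn: (a) that the relevant hearts are Noetherian (or at least that the moduli stacks of semistable objects with fixed class are of finite type), (b) that there are only finitely many classes $v\in\Gamma$ below a bounded class that can occur in Harder--Narasimhan or Jordan--H\"older filtrations along $\vV_X$ (a local-finiteness / support-type condition on walls), (c) that the objects of rank one we care about, namely $I_C$ and two-term complexes $(\oO_X\xrightarrow{s}F)$, all lie in the hearts $\aA_X$ for every $\sigma_\xi\in\vV_X$, and (d) any openness / constructibility properties needed to make the counting invariants $\widehat{\DT}_{n,\beta}(\sigma)$ well-defined.

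First I would record the structural facts already established. By Lemma~\ref{const:stab2} every $\sigma_\xi$ is a genuine point of $\Stab_{\Gamma_{\bullet}}(\dD_X)$, so the Harder--Narasimhan property, the support property, and local finiteness all hold; by Lemma~\ref{lem:cont} (via the continuity of $\xi\mapsto Z_\xi$) the family $\vV_X$ is continuous, hence connected, as remarked after Definition~\ref{def:vV}. The heart is $\aA_X=\langle\oO_X,\Coh_{\le 1}(X)[-1]\rangle_{\ex}$, and by Remark~\ref{Ox} the subcategory $\Coh_{\le 1}(X)[-1]$ is closed under subobjects and quotients, with $\oO_x[-1]$ simple. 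The key numerical observation is that for $E\in\aA_X$ the rank $\rk(E)=\ch_0(E)\ge 0$, and the classes appearing are tightly constrained: writing $\cl(E)=(-n,-\beta,r)$ one has $r>0$, or $r=0$ with $\beta\in\NE(X)$, or $r=\beta=0$ with $n>0$ (this is exactly the computation in the proof of Lemma~\ref{const:stab2}). This non-negativity, together with the fact that $\NE(X)$ is a finitely generated monoid and that $\omega\cdot\beta$ is bounded on a bounded set of curve classes, will be what drives the finiteness-of-walls statement: for a fixed total class $v=(-n,-\beta,1)$ there are only finitely many ways to split off sub/quotient classes, because the rank-zero part has $\beta'\in\NE(X)$ with $\beta'\le\beta$ and bounded $n'$.

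Next I would address boundedness of the moduli stacks $\mM^v(\sigma)$. Here I would use the description of semistable objects: any $E\in\aA_X$ with $\rk(E)=1$ sits in an exact sequence $0\to I_C\to E\to Q[-1]\to 0$ (Lemma~\ref{firstshow}(i)), and the ideal sheaves $I_C$ with bounded $(\beta,n)$ form a bounded family (they are parametrized by the projective Hilbert scheme $I_n(X,\beta)$), while the $0$-dimensional sheaves $Q$ of bounded length also form a bounded family; for rank-zero classes, semistable objects are shifts of pure/torsion sheaves in $\Coh_{\le1}(X)$ with fixed numerical invariants, again bounded. Hence $\mM^v(\sigma)$ is of finite type for every $\sigma\in\vV_X$ and every $v$, and is an Artin stack of finite type; this is what legitimizes forming Euler characteristics and the invariants $\widehat{\DT}_{n,\beta}(\sigma)$. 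Openness of semistability in families along $\vV_X$ would follow from Remark~\ref{rmk:closed} (the functions $\phi^\pm_\sigma(E)$ are continuous, so the locus where a given $E$ is semistable is closed, and a standard argument using boundedness promotes this to constructibility of the moduli stacks as $\sigma$ varies).

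The main obstacle I anticipate is matching my ad hoc list against the actual clauses of Assumption~\ref{assum}: Joyce-style wall-crossing frameworks typically demand a permissibility/Artin-stack hypothesis on the whole abelian category $\aA_X$ (existence of a good moduli-stack structure, finite stabilizers off the trivial $\mathbb{G}_m$, the Ext-group and deformation-theoretic behaviour needed for the Lie-algebra identities), plus a precise ``finiteness of the set of walls'' statement phrased in terms of the lattice $\Gamma$ and the filtration $\Gamma_\bullet$. Verifying permissibility for the tilted heart $\aA_X$, rather than for $\Coh(X)$ itself, is the delicate point: one must check that $\aA_X$ is Noetherian (which should follow from Noetherianity of $\Coh(X)$ together with the torsion-pair description and the closure property in Remark~\ref{clo}), and that the stack of objects in $\aA_X$ with fixed class is algebraic of finite type with affine geometric stabilizers --- here I would invoke that $\dD_X\subset D^b(\Coh(X))$ and that $\aA_X$ is the heart of a bounded t-structure obtained by tilting, so its objects are complexes with bounded, constructible cohomology, reducing the algebraicity to known results on moduli of complexes. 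Once permissibility and the finiteness of walls are in hand, the remaining clauses of Assumption~\ref{assum} are essentially the facts already proved in Lemma~\ref{const:stab2} and Lemma~\ref{firstshow}, and the verification is routine bookkeeping; accordingly I would structure the write-up so that the permissibility check occupies the bulk of the argument and everything else is dispatched by citation to the earlier lemmas.
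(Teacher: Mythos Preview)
Your proposal is largely speculative because, as you acknowledge, you have not read Assumption~\ref{assum}. In the paper that assumption is a list of seven concrete clauses, and the proof of Lemma~\ref{check} dismisses four of them (the first, second, third, and last) as obvious from the explicit form of $Z_\xi$ and of $\vV_X$, and then devotes three Steps to the remaining three: Step~1 proves that the stack $\oO bj^{v}(\aA_X)$ is an \emph{open} substack of the Lieblich moduli stack $\mM_0$; Step~2 proves that the semistable locus $\mM^{v}(\sigma_\xi)$ is open and of finite type; Step~3 constructs the subsets $T\subset S\subset N_{\le 1}(X)$ required by Assumption~\ref{assum2}. Your sketch of the boundedness argument (using the exact sequence $0\to I_C\to E\to Q[-1]\to 0$ from Lemma~\ref{firstshow}(i) and boundedness of Hilbert schemes) is essentially the paper's Step~2, so that part is on track.

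The genuine gap is that you neither identify nor address Steps~1 and~3. Step~1 is the most delicate: showing that the locus of $S$-points landing in $\aA_X$ is open in $\mM_0$ requires a spectral-sequence computation over a smooth curve base to show flatness of $\hH^0(\eE)$ and torsion-freeness of its fibers, followed by a noetherian induction to pass to general $S$; your one-line appeal to ``known results on moduli of complexes'' and tilting does not supply this. Step~3 is not hard but is specific: one must exhibit $S=\{(n,\beta):\beta\ge 0,\ n\ge m(\beta)\}$ and $T=\{(n,\beta):\beta\ge 0,\ n\ge 0\}$ (with $m(\beta)$ the minimum of $\ch_3(\oO_C)$ over curves in class $\beta$) and verify the four clauses of Assumption~\ref{assum2}, including building the cofinal family $\{S_\lambda\}$. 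Conversely, the items you emphasize --- Noetherianity of $\aA_X$, a general ``permissibility'' check in Joyce's sense, and a finiteness-of-walls statement --- are either already established earlier (Noetherianity is Lemma~\ref{Anoet}, used inside Lemma~\ref{const:stab2}) or are not part of Assumption~\ref{assum} at all (wall-finiteness is a \emph{consequence}, proved later as Lemmas~\ref{Sfin}--\ref{wacham}). So the write-up should be restructured: drop the permissibility discussion, cite the obvious clauses, keep your boundedness argument for Step~2, and supply actual proofs for the openness of $\oO bj^{v}(\aA_X)$ and the construction of $S,T$.
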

 
 \begin{rmk}
 It is also possible to construct 
 (usual) stability conditions on $\dD_X$. 
 For elements, 
 $$\alpha \in \mathbb{R}_{>0}, \ B+i\omega \in N^1(X)_{\mathbb{C}}, \
 \gamma \in \mathfrak{H}, $$
 with $\omega$ ample, we set
 $$Z\colon \Gamma \ni (s, l, r) \mapsto
 s\alpha -(B+i\omega)l+r\gamma \in \mathbb{C}.$$
 Then $(Z, \aA_X)$ satisfies (\ref{phase}), and 
 it determines an element of $\Stab_{\Gamma}(\dD_X)$
 if $\Imm \gamma>0$. 
 One can show that PT theory is 
 realized as stable objects with respect to
 such stability conditions.
 On the other hand, 
 DT theory does not appear as stable objects 
 with respect to the above 
 stability conditions. It might  
 exist stability conditions 
 in which DT-theory appears as 
 stable objects 
 after crossing the 
 wall $\Imm \gamma=0$. 
 However 
  we are unable to show
 the support property at the points $\Imm \gamma=0$,
 so the wall-crossing at such points cannot 
 be justified. 
 This
  is one of the reasons we work over 
 the space of weak stability conditions,
 rather than usual stability conditions.  
 \end{rmk}
 
\section{General framework}\label{sec:walcro}
\subsection{Moduli stacks}
In this paragraph, we give a framework 
to discuss the moduli problem of semistable 
objects in $\dD_X$. 
 Let us recall that 
there is an algebraic stack $\mM$
locally of finite type over $\mathbb{C}$, which
parameterizes $E\in D^b(\Coh(X))$ satisfying 
\begin{align}\label{Ext}
\Ext^i(E, E)=0, \quad \mbox{ for any }i<0.
\end{align}
(See~\cite{LIE}.)
Let $\mM_{0}$ be the fiber at $[0] \in \Pic(X)$
of the following morphism, 
$$\det \colon \mM \ni E \longmapsto \det E \in \Pic(X).$$
For any object $E\in \dD_X$, the corresponding 
$\mathbb{C}$-valued point $[E] \in \mM$ is 
contained in $\mM_0$.
Let $\aA \subset \dD_{X}$ be the heart of a bounded 
t-structure on $\dD_X$. 
We can consider the following (abstract) substack,
$$\oO bj(\aA) \subset \mM_{0},$$
which parameterizes objects $E\in \aA$.
The above stack decomposes as 
\begin{align*}
\oO bj(\aA)=\coprod _{v\in \Gamma} \oO bj^{v}(\aA),
\end{align*}
where $\oO bj^{v}(\aA)$
is the stack of 
objects $E\in \aA$ with $\cl(E)=v$. 

\subsection{Assumption}\label{subsection:assum}
Here we give a framework 
to discuss the wall-crossing formula under a general setting. 
Let $\Gamma_{\bullet}$ be a filtration (\ref{filt}) 
on $\Gamma=N_{\le 1}(X) \oplus \mathbb{Z}$, 
satisfying the following, 
\begin{align}\label{asfil}
\Gamma_{0} \subset \cdots \subset 
\Gamma_{N-1}=N_{\le 1}(X) \stackrel{i}{\hookrightarrow} 
\Gamma_{N}=\Gamma,
\end{align}
via the inclusion $i(s, l)=(s, l, 0)$. 
We assume that a connected subset 
$\vV\subset \Stab_{\Gamma_{\bullet}}(\dD_X)$ 
satisfies the following assumption. 
\begin{assum}\label{assum}
For any $\sigma=(Z=\{Z_i\}_{i=0}^{N}, \pP) \in \vV$ with 
$\aA=\pP((0, 1])$, the following conditions 
are satisfied. 
\begin{itemize}
\item There is $\psi \in \mathbb{R}$ which satisfies  
\begin{align}\label{phase1}
\oO_X \in \pP(\psi), \quad 
\frac{1}{2}<\psi < 1, 
\end{align}
and $\oO_X$ is the only object $E\in \pP(\psi)$
with $\cl(E)=(0, 0, 1)$. 
\item 
For any $1\le j\le N-1$, we have 
\begin{align}
Z_j(\mathbb{H}_j) \subset \mathbb{R}\cdot i.
\end{align}
\item For any $v, v'\in \Gamma_0$ and 
any other point $\tau=(W, \qQ) \in \vV$, 
 we have 
\begin{align}\label{extra}
Z(v)\in \mathbb{R}_{>0}Z(v') \quad 
\mbox{ if and only if } \quad 
W(v) \in \mathbb{R}_{>0}W(v').
\end{align}
\item For any $v\in \Gamma$ with $\rk(v)= 1$
or $v\in \Gamma_0$, the stack 
of objects 
$$\oO bj^{v}(\aA) \subset \mM_0,$$
is an open substack of $\mM_0$. In particular, 
$\oO bj^{v}(\aA)$ is an algebraic stack 
locally of finite type over $\mathbb{C}$. 
\item For any $v\in \Gamma$ with $\rk(v)=1$ or 
$v\in \Gamma_{0}$, the stack of 
$\sigma$-semistable objects $E\in \aA$
with $\cl(E)=v$, 
$$\mM^v(\sigma)\subset \oO bj^{v}(\aA), $$
is an open substack of finite type 
over $\mathbb{C}$. 
\item There 
are subsets $0\in T\subset S\subset N_{\le 1}(X)$, 
which satisfy Assumption~\ref{assum2} 
in the next paragraph.  
\item For any other point $\tau \in \vV$, 
there is a good path 
(see Definition~\ref{def:good} below) in $\vV$ which connects 
$\sigma$ and $\tau$. 
\end{itemize}
\end{assum}
The notion of good path is defined as follows. 
\begin{defi}\label{def:good}
\emph{
A path $[0, 1] \ni t \mapsto \sigma_t \in \vV$
is \textit{good} 
if for any $t\in (0, 1)$ and 
$v\in \Gamma_{0}$ satisfying 
$Z_t(v) \in \mathbb{R}_{>0}Z_t(\oO_X)$, we have 
\begin{align}
\label{good2}
&\arg Z_{t+\varepsilon}(v)< \arg Z_{t+\varepsilon}(\oO_X), \quad 
\arg Z_{t-\varepsilon}(v)> \arg Z_{t-\varepsilon}(\oO_X), \ \mbox{or} \\
\label{good1}
&\arg Z_{t+\varepsilon}(v)> \arg Z_{t+\varepsilon}(\oO_X), \quad 
\arg Z_{t-\varepsilon}(v)< \arg Z_{t-\varepsilon}(\oO_X), 
\end{align}
for $0<\varepsilon \ll 1$. }
\end{defi}
\begin{rmk}\label{rk}
For $\sigma=(Z, \pP)\in \vV$, 
the first condition of Assumption~\ref{assum}
implies that $\rk(E)\ge 0$ for any $E\in \pP((0, 1])$. 
\end{rmk}

\subsection{Completions of 
$\mathbb{C}[N_{\le 1}(X)]$}
Here we discuss about
completions of $\mathbb{C}[N_{\le 1}(X)]$
corresponding to subsets
 $0\in T\subset S\subset \mathbb{C}[N_{\le 1}(X)]$
 satisfying Assumption~\ref{assum2} below. 
 Note that the existence of such $T$, $S$
 is one of the conditions of Assumption~\ref{assum}.
 For subsets $S_1, S_2 \subset N_{\le 1}(X)$,  
we set
$$S_1+S_2 \cneq \{s_1+s_2 : s_i \in S_i \}\subset
N_{\le 1}(X).$$
The sixth condition of Assumption~\ref{assum} is stated 
as follows. 
\begin{assum}\label{assum2}
In the situation of Assumption~\ref{assum}, 
the subsets $0\in T\subset S\subset N_{\le 1}(X)$
satisfy the following conditions. 
\begin{itemize}
\item We have 
\begin{align}\label{assum:pro}
T+T\subset T, \quad S+T \subset S.
\end{align}
\item For any $x\in N_{\le 1}(X)$, 
there are only finitely many ways to write 
$x=y+z$ for $y, z\in S$. 
\item 
Let $\psi \in \mathbb{R}$ be as in (\ref{phase1})
for $\sigma \in \vV$.  
Then for $I=(\psi-\varepsilon, \psi+\varepsilon)$
with $0<\varepsilon \ll 1$, we have 
(see~(\ref{CI}) for $C_{\sigma}(I)$)
\begin{align}
\label{S1}
&\{(n, \beta)\in N_{\le 1}(X) : 
(-n, -\beta, 1)\in C_{\sigma}(I)\}
\subset S, \\
\label{T1}
&\{(n, \beta)\in \Gamma_{0} : 
(-n, -\beta, r)\in C_{\sigma}(I), \ 
r=0 \mbox{ or } 1\}
\subset T.
\end{align}
\item There is a family of sets
$\{S_{\lambda}\}_{\lambda \in \Lambda}$
with $S_{\lambda}\subset S$ such that 
$S\setminus S_{\lambda}$ is a finite set and 
\begin{align*}
S_{\lambda}+T\subset S_{\lambda}, \quad
 S=\bigcup_{\lambda\in \Lambda}(S\setminus S_{\lambda}).
\end{align*}
\end{itemize}
 \end{assum}
 For a possibly infinite sum, 
 $$f=\sum_{n, \beta}a_{n, \beta}x^n y^{\beta}, \quad 
 a_{n, \beta} \in \mathbb{C}, $$
 its support is defined by 
 $$\Supp(f)\cneq \{ (n, \beta)\in N_{\le 1}(X) :
 a_{n, \beta}\neq 0\}.$$
 The completions
are defined as follows. 
\begin{defi}\emph{
For a subset $S\subset N_{\le 1}(X)$, 
the vector space $\kakkoS$ 
is defined by 
$$\kakkoS\cneq \left\{ f=\sum_{n, \beta}a_{n, \beta}x^n y^{\beta}
: \Supp(f)\subset S \right\}.$$}
\end{defi}
Suppose that $0\in T\subset S$ satisfy Assumption~\ref{assum2}. 
The product on 
$\mathbb{C}[N_{\le 1}(X)]$ generalizes naturally to 
products on $\kakkoT$, 
and $\kakkoS$ is a $\kakkoT$-module
with $\kakkoT\subset \kakkoS$. 
Let $\{f_i\}_{i\in I}$ be 
a possibly infinite family
of elements of $\kakkoT$ with $f_i(0, 0)=0$. 
Then the infinite product 
 of the exponential
 makes sense,  
 $$\prod_{i\in I}\exp(f_i)\in \kakkoT, $$
 if the following condition holds
 for any $(n, \beta) \in N_{\le 1}(X)$,  
 \begin{align}\label{sharp}
 \sharp \{ i\in I : (n, \beta)\in \Supp(f_i)\}<\infty.
 \end{align}
  Also for $f\in \kakkoS$ and $g\in \kakkoT$ with 
  $g(0, 0)\neq 0$, 
  the quotient series makes sense, 
$$\frac{f}{g} \in \kakkoS.$$
Let $\{S_{\lambda}\}_{\lambda \in \Lambda}$ be
as in Assumption~\ref{assum2}. 
We write $\lambda' \preceq \lambda$ if 
$S_{\lambda}\subset S_{\lambda'}$. 
For $\lambda'\preceq \lambda$, we have
the surjection of $\kakkoT$-modules, 
$$\kakkoS /\kakkoSl \twoheadrightarrow 
\kakkoS/\kakkoSll.$$
In this way, we obtain 
the inductive system
of $\kakkoT$-modules
 $\{\kakkoS /\kakkoSl\}_{\lambda \in \Lambda}$, and
the isomorphism of $\kakkoT$-modules,
\begin{align}\label{prolim}
\kakkoS
\cong
\lim_{\begin{subarray}{c}\longleftarrow \\
\lambda \in \Lambda
\end{subarray}}
\kakkoS/
\kakkoSl.
\end{align}
Since each $\kakkoS/
\kakkoSl$
is a finite dimensional $\mathbb{C}$-vector space, 
its Euclid topology together with the isomorphism (\ref{prolim})
induce the topology on $\kakkoS$.

\subsection{Joyce invariants}\label{subsec:Joyce}
Take $\sigma \in \vV$, $v\in \Gamma$ with 
$\rk(v)=1$ or $v\in \Gamma_0$. 
Under Assumption~\ref{assum},  
we are able to construct the $\mathbb{Q}$-valued 
invariant, 
\begin{align}\notag
J^v(\sigma) \in \mathbb{Q}, 
\end{align}
such that if $\mM^v(\sigma)$ is written as
$[M/\mathbb{G}_m]$ for a scheme $M$
with $\mathbb{G}_m$ acting on $M$ trivially, 
then 
\begin{align}\label{Jeuler}
J^v(\sigma)=\chi(M).
\end{align}
Here $\chi(\ast)$ is the topological Euler characteristic. 
In general $\mM^v(\sigma)$ includes information of
the automorphisms of strictly semistable objects, and 
the denominator of $J^v(\sigma)$ 
is contributed by such non-trivial automorphisms. 
The invariant $J^v(\sigma)$ is introduced by 
D.~Joyce~\cite{Joy4}, using the 
notion of Hall-algebras. Here we briefly explain 
how to construct $J^v(\sigma)$. 

Suppose for instance that $\aA \subset \dD_{X}$ is the 
heart of bounded t-structure on $\dD_X$, 
such that the stack $\oO bj(\aA)$ is 
an algebraic stack locally of finite type. 
We denote by $\eE x(\aA)$ the stack of 
short exact sequences in $\aA$. There are morphisms 
of stacks, 
$$p_i \colon \eE x(\aA) \lr \oO bj(\aA), $$
sending a short exact sequence 
$$0 \lr A_1 \lr A_2 \lr A_3 \lr 0$$
to objects $A_i$ respectively. 

The $\mathbb{C}$-vector space $\hH(\aA)$ is defined 
to be spanned by symbols, 
$$[\xX \stackrel{f}{\lr} \oO bj(\aA)],$$
where $\xX$ is an algebraic stack of finite type 
with affine stabilizers, and $f$ is a morphism of 
stacks. The relations are generated of the form, 
\begin{align}\label{mot}
[\xX \stackrel{f}{\lr}\oO bj(\aA)]
-[\yY \stackrel{f|_{\yY}}{\lr} \oO bj(\aA)]-
[\uU \stackrel{f|_{\uU}}{\lr}\oO bj(\aA)], 
\end{align}
for a closed substack $\yY \subset \xX$ and 
$\uU=\xX\setminus \yY$. 

There is an associative product on $\hH(\aA)$
based on Ringel-Hall algebras, defined by 
$$[\xX \stackrel{f}{\lr}\oO bj(\aA)] \ast 
[\yY \stackrel{g}{\lr}\oO bj(\aA)]
=[\zZ \stackrel{p_2 \circ h}{\lr}\oO bj(\aA)],$$
where the morphism $h$ fits into the Cartesian square
$$\xymatrix{
\zZ \ar[r]^{h} \ar[d] & \eE x(\aA) \ar[d]^{(p_1, p_3)}
 \ar[r]^{p_2} & \oO bj(\aA). \\
\xX \times \yY  \ar[r]^{f\times g} & \oO bj(\aA)^{\times 2} &
}$$
The $\ast$-product is 
associative by~\cite[Theorem~5.2]{Joy2}. 
The algebra $\hH(\aA)$ is $\Gamma$-graded,
$$\hH(\aA)=\bigoplus_{v\in \Gamma}\hH_{v}(\aA),$$
where $\hH_v(\aA)$ is spanned by $[\xX \stackrel{f}{\to} \oO bj(\aA)]$
factoring via $\oO bj^{v}(\aA)\subset \oO bj(\aA)$. 

Let $\vV \subset \Stab_{\Gamma_{\bullet}}(\dD_X)$
be a subset satisfying Assumption~\ref{assum}, 
and take $\sigma=(Z, \pP) \in \vV$ 
with $\aA=\pP((0, 1])$. 
In Assumption~\ref{assum}, we do
not assume 
that $\oO bj^{v}(\aA)$ is algebraic 
for $\rk(v)>1$ or $\rk(v)=0$, $v\notin \Gamma_{0}$. 
 Instead we can discuss as follows. 
Under Assumption~\ref{assum}, we are able to 
define the following vector spaces, 
\begin{align*}\hH_{0}(\aA)&=\bigoplus_{v\in \Gamma_{0}}
\hH_{v}(\aA), \\
\hH_{N}(\aA)&=\bigoplus_{\rk(v)=1}
\hH_{v}(\aA).
\end{align*}
The similar $\ast$-product makes
$\hH_{0}(\aA)$ an associate algebra, 
and $\hH_{N}(\aA)$ a $\hH_{0}(\aA)$-bimodule. 
We define the elements $\delta^{v}(\sigma)$
and $\epsilon^{v}(\sigma)$ as follows. 
\begin{defi}\label{edel}
\emph{
Under the above situation, 
take $v\in \Gamma$ with $\rk(v)=1$
or $v\in \Gamma_{0}$. Suppose that 
$v\in C_{\sigma}(\phi)$ with $0<\phi \le 1$. 
We define $\delta^{v}(\sigma)$ to be 
\begin{align*}
\delta^{v}(\sigma)&=[\mM^v(\sigma)\hookrightarrow \oO bj(\aA)]
\in \hH_{\ast}(\aA),
\end{align*}
where $\ast=0$ if $v\in \Gamma_0$ and $\ast=N$
if $\rk(v)=1$. 
The element $\epsilon^{v}(\sigma)\in \hH_{\ast}(\aA)$
is defined to be 
\begin{align}\label{ep}
\epsilon^{v}(\sigma)=\sum_{\begin{subarray}{c}
v_1+\cdots +v_l=v, \\
v_i \in C_{\sigma}(\phi), \ 1\le i\le l.
\end{subarray}}
\frac{(-1)^{l-1}}{l}\delta^{v_1}(\sigma)\ast \cdots 
\ast \delta^{v_l}(\sigma). 
\end{align}}
\end{defi}
\begin{rmk}
It is possible to 
define $\delta^{v}(\sigma)$ by 
the fourth condition of Assumption~\ref{assum}. 
Take $v_1, \cdots, v_l \in C_{\sigma}(\phi)$ 
which appear in (\ref{ep}). 
By Remark~\ref{rk} and Lemma~\ref{Sfin} below, 
there is $1\le e \le l$ such that 
$\rk(v_e)=1$ and $v_i \in \Gamma_{0}$ for 
$i\neq e$, and there is a finite number of 
possibilities for such $v_i$.  
Therefore 
(\ref{ep}) is a finite sum and 
$\epsilon^{v}(\sigma)$ is well-defined. 
\end{rmk}
There is a map (cf.~\cite[Theorem~4.9]{Joy5}), 
$$\Upsilon \colon \hH(\aA) \lr \mathbb{Q}(t), $$
such that if $G$ is a special
algebraic 
group (cf.~\cite[Definition~2.1]{Joy5}) 
acting on a variety $Y$, we have 
$$\Upsilon([[Y/G] \stackrel{f}{\to}\oO bj(\aA)])
=P(Y, t)/P(G, t), $$
where $P(Y, t)$ is the virtual Poincar\'e polynomial of 
$Y$, i.e. if $Y$ is smooth and projective, we have 
$$P(Y, t)=\sum_{i\in \mathbb{Z}}(-1)^{i} \dim H^i(Y, \mathbb{C})t^i, $$
and $P(Y, t)$ is defined for any variety $Y$ using the motivic
relation (\ref{mot}) for varieties. 
\begin{thm}\emph{(\cite[Section~6.2]{Joy5})}
The element 
$$(t^2-1)\Upsilon(\epsilon^{v}(\sigma)) \in \mathbb{Q}(t),$$
is regular at $t=1$. 
\end{thm}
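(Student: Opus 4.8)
The statement is Joyce's no-poles theorem, and the plan is to deduce it from the fact that $\epsilon^{v}(\sigma)$ is a \emph{virtual indecomposable} stack function, for which the behaviour under $\Upsilon$ near $t=1$ is controlled. First I would localise to a single phase. Fix $\sigma=(Z,\pP)\in\vV$ and $\phi\in(0,1]$ with $v\in C_{\sigma}(\phi)$; then every class $v_{i}$ appearing in the sum (\ref{ep}) also lies in $C_{\sigma}(\phi)$, so that all the objects being parametrised are $\sigma$-semistable of the single phase $\phi$, i.e.\ objects of the abelian category $\pP(\phi)\subset\aA$. Hence $\mM^{v_{i}}(\sigma)=\oO bj^{v_{i}}(\pP(\phi))$ for all these classes, and since $\pP(\phi)$ is extension-closed, products of the $\delta^{v'}(\sigma)$ with $v'\in C_{\sigma}(\phi)$ stay among stack functions of objects of $\pP(\phi)$; thus the formula (\ref{ep}) makes sense within $\hH_{0}(\aA)$ when $v\in\Gamma_{0}$, and within the $\hH_{0}(\aA)$-bimodule $\hH_{N}(\aA)$ when $\rk(v)=1$. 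Inside this structure, (\ref{ep}) says precisely that $\sum_{v'}\epsilon^{v'}(\sigma)$ is the formal logarithm, graded by $\cl$, of the characteristic stack function $1+\sum_{v'}\delta^{v'}(\sigma)$ of $\pP(\phi)$; this is the graded form of Joyce's identity $\bar\epsilon=\log\bar\delta$ (see \cite{Joy3,Joy4}).

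The second step, and the substantive one, is to invoke Joyce's structure theorem that this logarithm is supported, in the weighted sense of stack functions, on indecomposable objects: $\epsilon^{v}(\sigma)$ lies in the subspace $\mathrm{SF}^{\mathrm{ind}}_{\mathrm{al}}(\oO bj(\pP(\phi)))$ of virtual indecomposables. The role of the coefficients $(-1)^{l-1}/l$ in (\ref{ep}) is exactly to cancel the contributions of direct summands, so that in a presentation $\epsilon^{v}(\sigma)=\sum_{a}c_{a}[\,\xX_{a}\to\oO bj\,]$ one may take each $\xX_{a}$ of the form $Y_{a}\times[\Spec\mathbb{C}/\mathbb{G}_{m}]$ with $Y_{a}$ a variety carrying the trivial $\mathbb{G}_{m}$-action; equivalently, the automorphism groups occurring have reductive part exactly the scalars $\mathbb{G}_{m}$ (together with a unipotent part). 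I would make this precise by stratifying $\oO bj^{v}(\pP(\phi))$ by the isomorphism type $E\cong\bigoplus_{i}E_{i}^{\oplus a_{i}}$ of the underlying object — on such a stratum $\Aut(E)$ has reductive rank $\sum_{i}a_{i}$ — and checking, by Joyce's Hall-algebraic computation, that the coefficients extracted from $\log\bar\delta$ vanish on every stratum with $\sum_{i}a_{i}\ge 2$. This cancellation is the heart of the matter, and it is the content of \cite[Section~6.2]{Joy5} together with the Hall-algebra formalism of \cite{Joy2}; rather than reproduce it I would cite those papers for the precise statements.

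The last step is a formal pole count for $\Upsilon$. For a quotient $[Y/G]$ with $G$ special, a filtration of $G$ with $\mathbb{G}_{a}$- and $\mathbb{G}_{m}$-quotients gives $P(G,t)=t^{2u}(t^{2}-1)^{m}$, where $m$ is the reductive rank of $G$, so $\Upsilon([\,[Y/G]\to\oO bj\,])=P(Y,t)/(t^{2u}(t^{2}-1)^{m})$ has a pole of order at most $m$ at $t=1$. By the previous step $\Upsilon(\epsilon^{v}(\sigma))$ is a $\mathbb{Q}$-combination of such terms with $m=1$, hence has at most a simple pole at $t=1$; therefore $(t^{2}-1)\Upsilon(\epsilon^{v}(\sigma))$ is regular at $t=1$, and its value there is the invariant $J^{v}(\sigma)\in\mathbb{Q}$ of Section~\ref{subsec:Joyce}. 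Note that $J^{v}(\sigma)$ need not be an integer — that is exactly the phenomenon caused by strictly semistable classes — but it is always rational, the only denominators being the finitely many factors $1/l$ in (\ref{ep}). The main obstacle, as indicated, is establishing the virtual-indecomposability of $\epsilon^{v}(\sigma)$; once that is in hand the pole count is purely formal.
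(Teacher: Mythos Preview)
The paper does not prove this theorem at all: it is stated with the attribution \cite[Section~6.2]{Joy5} and no proof is given, since the result is entirely due to Joyce. Your outline is a faithful sketch of Joyce's argument --- $\epsilon^{v}(\sigma)$ is the $\cl$-graded logarithm of the characteristic function of $\pP(\phi)$, it lies in the Lie subalgebra $\mathrm{SF}^{\mathrm{ind}}_{\mathrm{al}}$ of virtual indecomposables, and for such elements $\Upsilon$ has at most a simple pole at $t=1$ --- so there is nothing to compare against in this paper.

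One small caution on your presentation: the characterisation ``each $\xX_{a}$ of the form $Y_{a}\times[\Spec\mathbb{C}/\mathbb{G}_{m}]$ with $Y_{a}$ a variety'' and ``automorphism groups have reductive part exactly the scalars'' is an oversimplification of what virtual indecomposability means. Joyce's $\mathrm{SF}^{\mathrm{ind}}_{\mathrm{al}}$ is defined via the projection operators $\Pi^{\rm vi}_{n}$ (virtual rank decomposition), and a virtual indecomposable element is not literally supported on objects with $\Aut\cong\mathbb{G}_{m}\times(\text{unipotent})$; rather, after applying Joyce's projections one can express it as a $\mathbb{Q}$-linear combination of pieces $[U\times[\Spec\mathbb{C}/T^{1}]\to\oO bj(\aA)]$ with $T^{1}$ a rank-one torus and $U$ quasi-projective. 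That is what feeds into your pole count, and your conclusion is correct, but if you were writing this up you should state the virtual-rank formalism rather than the heuristic about automorphism groups.
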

The above theorem is used to define the invariant 
$J^v(\sigma)\in \mathbb{Q}$. 
\begin{defi}\label{Jinv}
\emph{
For $\sigma \in \vV$ and $v\in \Gamma$ with 
$\rk(v)=1$ or $v\in \Gamma_0$, 
we define $J^v(\sigma)\in \mathbb{Q}$ 
as follows. }
\begin{itemize}
\item \emph{If $v\in C_{\sigma}(\phi)$ for $0<\phi \le 1$, we define}
\begin{align*}
J^v(\sigma)\cneq
\lim_{t\to 1}
(t^2-1)\Upsilon(\epsilon^{v}(\sigma)).
\end{align*}
\item \emph{If $v\in C_{\sigma}(\phi)$ for $1<\phi \le 2$, we 
define $J^v(\sigma)\cneq J^{-v}(\sigma)$.}
\item \emph{Otherwise we define $J^v(\sigma)=0$. }
\end{itemize}
\end{defi}
\begin{rmk}
Suppose that $\mM^v(\sigma)=[M/\mathbb{G}_m]$ for a scheme $M$
with $\mathbb{G}_m$ acting on $M$ trivially. 
Then for any $\mathbb{C}$-valued point 
of $\mM^v(\sigma)$, 
the corresponding object
$E\in \aA$ is $\sigma$-stable. Hence we have 
$\epsilon^{v}(\sigma)=\delta^{v}(\sigma)=
([M/\mathbb{G}_m] \to \oO bj(\aA))$ and 
$$(t^2-1)\Upsilon(\epsilon^{v}(\sigma))=P(M, t).$$
Therefore we obtain (\ref{Jeuler}). 
\end{rmk}
Under the above situation, we introduce 
the following generating series. 
\begin{defi}\label{def:geDT}
\emph{
Let $\vV\subset \Stab_{\Gamma_{\bullet}}(\dD_X)$
be a subset satisfying Assumption~\ref{assum}. 
For $(n, \beta)\in N_{\le 1}(X)$
and $\sigma \in \vV$, 
we define $\widehat{\DT}_{n, \beta}(\sigma)$ to be
$$\widehat{\DT}_{n, \beta}(\sigma)\cneq 
J^{(-n, -\beta, 1)}(\sigma) \in \mathbb{Q}.$$
The generating series $\widehat{\DT}(\sigma)$
and $\widehat{\DT}_{0}(\sigma)$ are defined by 
\begin{align*}
\widehat{\DT}(\sigma)&\cneq
\sum_{n, \beta}\widehat{\DT}_{n, \beta}(\sigma)x^n y^{\beta}\in 
\kakkoS, \\
\widehat{\DT}_{0}(\sigma)&\cneq 
\sum_{(n, \beta)\in \Gamma_{0}}
\widehat{\DT}_{n, \beta}(\sigma)x^n y^{\beta}\in 
\kakkoT.
\end{align*}
The above series are elements of $\kakkoS$, $\kakkoT$
respectively by the third condition of Assumption~\ref{assum2}. 
Since $\widehat{\DT}_0(\sigma)=1+\cdots$ by the first 
condition of Assumption~\ref{assum}, 
the following reduced series is well-defined, 
\begin{align*}
\widehat{\DT}'(\sigma)\cneq \frac{\widehat{\DT}(\sigma)}{\widehat{\DT}_{0}(\sigma)}
\in \kakkoS.
\end{align*}}
\end{defi}

\section{Wall-crossing formula}\label{sec:Wcro}
The purpose of this section is 
to study how $\widehat{\DT}(\sigma)$ varies 
under change of $\sigma$. 
First let us introduce the 
pairing $\chi$ on $\Gamma$, 
\begin{align}\label{chi}
\chi((s, l, r), (s', l', r'))=rs'-r's.
\end{align}
By the Riemann-Roch theorem and the Serre duality, for 
$E, F \in \dD$ we have 
\begin{align*}\chi(\cl(E), \cl(F))=&
\dim \Hom(E, F)-\dim \Ext^1(E, F) \\
& \qquad \qquad +\dim \Ext^1(F, E)
-\dim \Hom(F, E).
\end{align*}
Below we fix a subset $\vV \subset \Stab_{\Gamma_{\bullet}}(\dD_X)$
satisfying Assumption~\ref{assum}. 
\subsection{Wall and chamber structure}
In this paragraph, 
we show the existence of wall and 
chamber structure in a neighborhood of 
$\sigma \in \vV$. 
For
$v\in \Gamma$ and $\varepsilon>0$, we set 
\begin{align}\label{S_e}
S_{\varepsilon, v}(\sigma)\cneq
\left\{v'\in \Gamma : \begin{array}{l}
\mbox{there is }\phi \in \mathbb{R} \mbox{ such that }\\
v', v-v' \in C_{\sigma}((\phi-\varepsilon, \phi+\varepsilon)).
\end{array}
\right\}.
\end{align}
We show the following lemma. 
\begin{lem}\label{Sfin}
(i) Suppose that $v\in \Gamma_0$.  Then for 
$0<\varepsilon\ll 1$, we have 
$S_{\varepsilon, v}(\sigma)\subset \Gamma_0$ and 
$S_{\varepsilon, v}(\sigma)$ is a finite set. 

(ii) Suppose that $\rk(v)=1$. Then for 
$0<\varepsilon \ll 1$, we have 
$S_{\varepsilon, v}(\sigma)\cap N_{\le 1}(X)\subset
\Gamma_{0}$ and $S_{\varepsilon, v}(\sigma)$ is a finite set. 
\end{lem}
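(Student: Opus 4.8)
The plan is to reduce the whole statement to one structural fact about weak stability conditions and then argue by phases in each of the two cases.

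\emph{Preliminary observation.} For $\sigma=(Z,\pP)\in\Stab_{\Gamma_{\bullet}}(\dD_X)$ and any interval $I$ of length $2\varepsilon<1$, I would first show that every nonzero $E\in\pP(I)$ has $\cl(E)\neq0$, that $\arg Z(\cl(E))$ has a representative in $\pi I$, and that $\lVert\cl(E)\rVert\le C\cos(\pi\varepsilon)^{-1}\lvert Z(\cl(E))\rvert$, with $C$ the support constant. Since $Z\circ\cl$ is only piecewise linear along $\Gamma_{\bullet}$ none of this is formal: I would take the Harder--Narasimhan factors $F_k\in\pP(\phi_k)$ of $E$ with $\phi_k\in I$, let $m$ be the largest $\Gamma$-index occurring among the $\cl(F_k)$, and observe that $[\cl(E)]=\sum_{k\in K}[\cl(F_k)]$ in $\mathbb{H}_m$, where $K=\{k:\cl(F_k)\notin\Gamma_{m-1}\}\neq\emptyset$; for $k\in K$ the phase axiom (\ref{phase}) gives $Z(\cl(F_k))=Z_m([\cl(F_k)])\in\mathbb{R}_{>0}\exp(i\pi\phi_k)$, and since the $\phi_k$ span an interval of length $<1$ these lie in an open half-plane, so their positive sum is nonzero — forcing $[\cl(E)]\neq0$ and $Z(\cl(E))=\sum_{k\in K}Z(\cl(F_k))$ — with argument in $\pi[\min_{k\in K}\phi_k,\max_{k\in K}\phi_k]$; the modulus bound then follows from (\ref{support}) and the triangle inequality in $\mathbb{H}_m\otimes_{\mathbb{Z}}\mathbb{R}$. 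The same computation yields, when in addition all $\cl(F_k)$ lie in $\Gamma_0$, the sharper estimate $\sum_k\lVert\cl(F_k)\rVert_0\le C\cos(\pi\varepsilon)^{-1}\lvert Z(\cl(E))\rvert$.

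\emph{Case (i): $v\in\Gamma_0$.} Let $v'\in S_{\varepsilon,v}(\sigma)$ (see (\ref{S_e})) be witnessed by nonzero $E_1,E_2\in\pP(I)$, $I=(\phi-\varepsilon,\phi+\varepsilon)$, with $\cl(E_1)=v'$, $\cl(E_2)=v-v'$; the cases $v'\in\{0,v\}$ give only elements of $\Gamma_0$. If $v'\notin\Gamma_0$, say $v'\in\Gamma_j\setminus\Gamma_{j-1}$ with $j\ge1$, then $v\in\Gamma_0\subset\Gamma_{j-1}$ forces $v-v'\in\Gamma_j\setminus\Gamma_{j-1}$ with $[v-v']=-[v']$ in $\mathbb{H}_j$, so $Z(v-v')=-Z(v')$; but by the preliminary observation the arguments of $Z(v')$ and $Z(v-v')$ have representatives in $\pi I$, an interval of length $2\pi\varepsilon<\pi$ — impossible. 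Hence $v',v-v'\in\Gamma_0$ and $S_{\varepsilon,v}(\sigma)\subset\Gamma_0$. For finiteness, $Z$ is now additive on $\Gamma_0=\mathbb{H}_0$ and $Z(v'),Z(v-v')$ lie in a common cone of angular width $<\pi$ summing to $Z(v)$, so $\lvert Z(v')\rvert\le\cos(\pi\varepsilon)^{-1}\lvert Z(v)\rvert$; as all Harder--Narasimhan factors of $E_1$ have class in $\Gamma_0$, the sharper estimate gives $\lVert v'\rVert_0\le C\cos(\pi\varepsilon)^{-2}\lvert Z(v)\rvert$, a bound independent of $v'$, so $S_{\varepsilon,v}(\sigma)$ is a finite subset of the lattice $\Gamma_0$.

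\emph{Case (ii): $\rk(v)=1$.} To prove $S_{\varepsilon,v}(\sigma)\cap N_{\le1}(X)\subset\Gamma_0$, take $v'\in S_{\varepsilon,v}(\sigma)$ with $\rk(v')=0$, witnessed by $E_1,E_2\in\pP(I)$ as above; if $v'\in\{0,v\}$ then $v'=0\in\Gamma_0$. Otherwise $v-v'=\cl(E_2)$ has rank $1$, so $E_2$ has a Harder--Narasimhan factor $F$ of rank $r\neq0$; since $[v]=[\cl(\oO_X)]$ in $\mathbb{H}_N$ for every rank-one class, $Z(\cl(F))=rZ(\oO_X)$, which is a positive multiple of $\exp(i\pi\psi)$ up to sign, where $\tfrac12<\psi<1$ by the first condition of Assumption~\ref{assum}; comparing with $F\in\pP(\phi_F)$ forces $\phi_F\equiv\psi\pmod1$, with $\phi_F\in I$. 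If moreover $v'\notin\Gamma_0$, then $v'\in\Gamma_l\setminus\Gamma_{l-1}$ with $1\le l\le N-1$, so by the second condition of Assumption~\ref{assum} $Z(v')\in i\mathbb{R}\setminus\{0\}$, giving a representative of $\arg Z(v')$ in $\tfrac\pi2+\pi\mathbb{Z}$; since (preliminary observation) that representative lies in $\pi I$ and $\phi_F\in I$, we get $\lvert\psi-\tfrac12\rvert<2\varepsilon$, impossible for $\varepsilon$ small. Hence the rank-zero part of $S_{\varepsilon,v}(\sigma)$ lies in $\Gamma_0$. For finiteness I would next check that every Harder--Narasimhan factor of $E_1$ and of $E_2$ is either the unique rank-one factor $F_{\ast}$ of $E_2$ or is rank-zero with class in $\Gamma_0$ (two factors of opposite nonzero rank have phases differing by an odd integer inside an interval of length $<1$; a rank-zero factor with class outside $\Gamma_0$ again forces $\lvert\psi-\tfrac12\rvert<2\varepsilon$); so $v=\cl(F_{\ast})+U$ with $\cl(F_{\ast})$ of rank one and $U\in\Gamma_0$ a sum of rank-zero classes, all the relevant classes lying in $C_{\sigma}$ of a small interval which, up to an even shift, is centred at $\psi$. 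By conditions (\ref{S1}),(\ref{T1}) and the finiteness clause of Assumption~\ref{assum2}, applied to this decomposition, the classes of all these factors — hence $v-v'$ and $v'$ — range over finite sets. Since $S_{\varepsilon,v}(\sigma)$ is symmetric under $v'\mapsto v-v'$, it is contained in $\Gamma_0\cup(v+\Gamma_0)$ and is finite.

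\emph{Main obstacle.} The genuinely delicate point is the non-additivity of $Z\circ\cl$ for a weak stability condition: the central charge of an extension is not the sum of the pieces and ``a subobject has smaller phase'' can fail, so the preliminary observation must be proved by the half-plane argument above, and every modulus estimate must be carried out on the individual semistable Harder--Narasimhan factors — where (\ref{phase}) and (\ref{support}) do apply — rather than on $E_1,E_2$ directly. Once this is in place, bookkeeping of the finitely many smallness conditions on $\varepsilon$, essentially $2\pi\varepsilon<\pi$ together with $\varepsilon<\min(\psi-\tfrac12,\,1-\psi)$, is routine.
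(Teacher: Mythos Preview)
Your proof is correct and follows essentially the same strategy as the paper's: localise the interval near the phase $\psi$ of $\oO_X$, use the second condition of Assumption~\ref{assum} to force rank-zero classes into $\Gamma_0$, and then invoke (\ref{S1}), (\ref{T1}) together with the two-term finiteness in Assumption~\ref{assum2}. Your ``preliminary observation'' is a careful justification of something the paper states without proof (namely the claim implicit in (\ref{def:phase}) that $v\in C_\sigma(I)$ forces $\tfrac1\pi\arg Z(v)\in I$); this is genuinely useful, since $Z\circ\cl$ is only filtration-wise linear.

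Two places where the paper is more economical. First, rather than analysing Harder--Narasimhan factors of $E_2$ to find a rank-one piece, the paper simply invokes Remark~\ref{rk}: once the interval sits inside $(1/2,1)$, every object in $\pP(I)$ has nonnegative rank, so $\rk(v')+\rk(v-v')=1$ immediately gives the $\{0,1\}$ split. Second, for the finiteness in (ii) your route through ``the classes of all HN factors range over finite sets'' is more than you need and, as stated, would require controlling multi-term decompositions in $S$ and $T$, which Assumption~\ref{assum2} does not directly provide; the paper instead applies (\ref{S1}) and (\ref{T1}) to $v'$ and $v-v'$ themselves (both lie in $C_\sigma$ of the small interval centred at $\psi$), obtaining a single two-term decomposition $(n,\beta)=(n',\beta')+(n'',\beta'')$ with $(n',\beta')\in S$, $(n'',\beta'')\in T\subset S$, and then the finiteness clause of Assumption~\ref{assum2} applies on the nose. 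Replacing your last sentence on finiteness with this direct argument closes the only loose end.
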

\begin{proof}
(i) The first assertion is obvious. 
The finiteness of 
$S_{\varepsilon, v}(\sigma)$ easily follows 
from the support property (\ref{support}). 

(ii) 
Suppose that $S_{\varepsilon, v}(\sigma)\neq \emptyset$. 
Then we have $v\in C_{\sigma}((\phi-\varepsilon, \phi+\varepsilon))$
for some $0\le \phi<2$. Since $\phi_{\sigma}(v)=\psi$, 
where $\psi$ is given in~(\ref{phase1}), 
we have $\psi\in (\phi-\varepsilon, \phi+\varepsilon)$, 
hence $v\in C_{\sigma}((\psi-2\varepsilon, \psi+2\varepsilon))$. 
By choosing $\varepsilon>0$ sufficiently small, 
we may assume that $(\psi-2\varepsilon, \psi+2\varepsilon)
\subset (1/2, 1)$. 
Then if $v=v'+v''$ in  
$C_{\sigma}((\psi-2\varepsilon, \psi+2\varepsilon))$, 
we may assume that $v'$ and $v''$ 
are written as $v'=(-n', -\beta', 1)$
and $v''=(-n'', -\beta'', 0)$
by Remark~\ref{rk}. 
Then $v''\in \Gamma_0$ follows from the second 
condition of Assumption~\ref{assum}, 
which implies the first assertion. 
By (\ref{S1}) and (\ref{T1}), 
we have $(n', \beta')\in S$ and $(n'', \beta'') \in T$. 
Therefore the finiteness of $S_{\varepsilon, v}(\sigma)$
follows from the second condition of Assumption~\ref{assum2}. 
\end{proof}
We have the following lemma. 

\begin{lem}\label{wacham}
Take $\sigma \in \vV$, 
$v\in \Gamma$ with $\rk(v)=1$ or $v\in \Gamma_{0}$, 
and $0<\varepsilon \ll 1$
such that $S_{\varepsilon, v}(\sigma)$ is a finite 
set. (cf.~Lemma~\ref{Sfin}.)
Let $\sigma \in U_{\varepsilon}
 \subset \Stab_{\Gamma_{\bullet}}(\dD_X)$
 be an open neighborhood of $\sigma$ such that any
 $\tau=(W, \qQ)\in U_{\varepsilon}$
  satisfies $d(\pP, \qQ)<\varepsilon$. 
Then 
there are finitely many real codimension 
one submanifolds $\{\wW_{\lambda}\}_{\lambda \in \Lambda}$
in $U_{\varepsilon}$,
such that if $\sigma_1, \sigma_2\in \vV$ are 
contained in the same connected component of 
$U_{\varepsilon}
\setminus\{\wW_{\lambda}\}_{\lambda \in \Lambda}$, then 
$$\mM^{v'}(\sigma_1)=\mM^{v'}(\sigma_2),$$
for any $v'\in S_{\varepsilon, v}(\sigma)$. 
\end{lem}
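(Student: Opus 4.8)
The plan is to produce the walls $\{\wW_\lambda\}$ as the loci where two of the finitely many classes in $S_{\varepsilon,v}(\sigma)$ have the same phase, and then to argue that stability of any object with class in $S_{\varepsilon,v}(\sigma)$ depends only on which connected component of the complement of these walls we sit in. First I would fix $\sigma=(Z,\pP)$ and $0<\varepsilon\ll 1$ so that $S_{\varepsilon,v}(\sigma)$ is finite (Lemma~\ref{Sfin}), and choose the neighbourhood $U_\varepsilon$ as in the statement, shrinking it if necessary so that for every $\tau=(W,\qQ)\in U_\varepsilon$ one has $d(\pP,\qQ)<\varepsilon$. For each pair $w_1,w_2\in S_{\varepsilon,v}(\sigma)$ with $Z(w_1),Z(w_2)\ne 0$ and $Z(w_1)\notin\mathbb{R}_{>0}Z(w_2)$, define
\begin{align*}
\wW_{w_1,w_2}\cneq\{\tau=(W,\qQ)\in U_\varepsilon : W(w_1)\in\mathbb{R}_{>0}W(w_2)\}.
\end{align*}
By Theorem~\ref{thm:stab} the map $\Pi\colon\Stab_{\Gamma_\bullet}(\dD_X)\to\prod_i\mathbb{H}_i^\vee$ is a local homeomorphism, so near $\sigma$ the condition $\arg W(w_1)=\arg W(w_2)$ cuts out a real codimension one submanifold (the equation is one real-analytic condition on $W$, nondegenerate because $w_1,w_2$ are not already proportional under $Z$). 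Taking $\Lambda$ to index all such admissible pairs gives finitely many walls $\{\wW_\lambda\}_{\lambda\in\Lambda}$.

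Next I would show that $\mM^{v'}(\tau)$ is locally constant on $U_\varepsilon\setminus\bigcup_\lambda\wW_\lambda$ for each $v'\in S_{\varepsilon,v}(\sigma)$. Fix such a component, pick $\tau_1,\tau_2$ in it, and suppose $E\in\aA$ with $\cl(E)=v'$ is $\tau_1$-semistable. Because $d(\pP_{\tau_i},\pP)<\varepsilon$, any subobject or quotient $E'\hookrightarrow E$ in the relevant quasi-abelian category has $\cl(E')$ and $\cl(E)-\cl(E')$ lying in $C_\sigma((\phi-\varepsilon,\phi+\varepsilon))$ for the appropriate $\phi$, hence $\cl(E')\in S_{\varepsilon,v}(\sigma)$; so only the finitely many classes in $S_{\varepsilon,v}(\sigma)$ can destabilize $E$, and moreover the hearts $\aA_{\tau_i}$ coincide with $\aA$ on the subcategory generated by objects with those classes (here one uses the first and second conditions of Assumption~\ref{assum}, which force $\rk\ge 0$ and pin down $\oO_X$, together with Remark~\ref{rk}). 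For each potentially destabilizing class $w\in S_{\varepsilon,v}(\sigma)$, whether $\arg W(w)\le\arg W(v'-w)$ or the reverse is determined by the sign of a continuous function on $U_\varepsilon$ that vanishes exactly on $\wW_{w,v'-w}$ (or is identically one sign if $w$ and $v'$ are already $Z$-proportional, in which case the extra condition~(\ref{extra}) of Assumption~\ref{assum} keeps the comparison constant across $\vV$). Since $\tau_1,\tau_2$ lie in the same component, none of these signs changes, so the semistable objects agree: $M^{v'}(\tau_1)=M^{v'}(\tau_2)$ as sets, and then $\mM^{v'}(\tau_1)=\mM^{v'}(\tau_2)$ as substacks of $\oO bj^{v'}(\aA)$ by the openness/finiteness statements in Assumption~\ref{assum}.

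The main obstacle I expect is the bookkeeping in the second step: one must be careful that ``subobject in the quasi-abelian category $\pP_\tau(I)$'' is controlled by the metric bound $d(\pP_\tau,\pP)<\varepsilon$ in the way claimed — i.e. that $\phi_{\pP_\tau}^\pm$ of any strict sub/quotient stays within $\varepsilon$ of the corresponding $\phi_\pP^\pm$ — and that the Harder--Narasimhan factors of such an object again have classes in $S_{\varepsilon,v}(\sigma)$, which is where the precise choice of $\varepsilon$ (and the reduction in Lemma~\ref{Sfin}(ii) to the window $(\psi-2\varepsilon,\psi+2\varepsilon)\subset(1/2,1)$) must be invoked. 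Everything else is a finiteness count plus the local-homeomorphism property from Theorem~\ref{thm:stab}; the only genuinely delicate point is checking that the finite list $S_{\varepsilon,v}(\sigma)$ really does capture all destabilizers uniformly over $U_\varepsilon$, after which the wall structure is just the arrangement of the finitely many hypersurfaces $\wW_{w_1,w_2}$.
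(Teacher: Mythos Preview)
Your approach is essentially the same as the paper's: the paper also sets $\Lambda=S_{\varepsilon,v}(\sigma)\times S_{\varepsilon,v}(\sigma)$ and defines $\wW_{(v_1,v_2)}=\{\tau=(W,\qQ)\in U_\varepsilon : W(v_1)/W(v_2)\in\mathbb{R}_{>0}\}$, then declares the verification routine and refers to \cite[Proposition~9.3]{Brs2}. The only cosmetic difference is that the paper does not bother to exclude pairs already $Z$-proportional (it simply allows degenerate ``walls'' in the finite list), whereas you separate those out and invoke condition~(\ref{extra}); your more detailed second paragraph is exactly the argument the paper leaves implicit under ``it is easy to see.''
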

\begin{proof}
We set $\Lambda=S_{\varepsilon, v}(\sigma)\times 
S_{\varepsilon, v}(\sigma)$. For each $\gamma=(v_1, v_2)
\in \Lambda$, 
we define $\wW_{\lambda}$ to be
$$\wW_{\lambda}\cneq \{\tau=(W, \qQ) \in U_{\varepsilon} :
W(v_1)/W(v_2) \in \mathbb{R}_{>0}\}.$$
Then it is easy to see that
$\{\wW_{\lambda}\}_{\gamma\in \Lambda}$ gives 
a desired set of submanifolds. 
(Also see the proof of~\cite[Proposition~9.3]{Brs2}.)
\end{proof}

\subsection{Joyce's formula}
Take $\sigma\in \vV$ and
$v\in \Gamma$ with $\rk(v)=1$ or $v\in \Gamma_0$. 
Our setting in this paragraph is as follows. 
\begin{itemize}
\item 
We choose
$\varepsilon>0$ and an open
neighborhood $\sigma \in U_{\varepsilon}$ as in Lemma~\ref{wacham}. 
By choosing $\varepsilon>0$ sufficiently small, 
we may assume that any connected 
component $\cC \subset U_{\varepsilon}
\setminus \{\wW_{\lambda}\}_{\lambda \in \Lambda}$
satisfies $\sigma \in \overline{\cC}$.
Below we denote by $V_{\varepsilon}$ the connected component of 
$U_{\varepsilon}\cap \vV$ which contains $\sigma$. 
We take two weak stability conditions 
$\tau, \tau'\in V_{\varepsilon}$. 
\end{itemize}
The wall-crossing formula enables us to 
describe $J^v(\tau)$ in
terms of $J^{v'}(\tau)$ with 
$v'\in S_{\varepsilon, v}(\sigma)$. 
The transformation coefficients are purely 
combinatorial.
In what follows, $I\subset \mathbb{R}$ is 
a sufficiently small interval, i.e. 
$I=(a, b)$ with $0<b-a \ll 1$. 
Note that for $v\in C_{\sigma}(I)$, 
we have, (see~(\ref{def:phase}),)
$$\phi_{\tau}(v) \in I\pm \varepsilon.$$
\begin{defi}\label{def:S}
\emph{\bf{\cite[Definition~4.2]{Joy4}}}\emph{
For non-zero $v_1, \cdots, v_l \in C_{\sigma}(I)$, 
we define 
$$S(\{v_1, \cdots, v_l\}, \tau, \tau')
\in \{0, \pm 1\}, $$
as follows. 
If for each $i=1, \cdots, l-1$, we have either 
(\ref{eith1}) or (\ref{eigh2}), 
\begin{align}
\label{eith1}
\phi_{\tau}(v_i) \le \phi_{\tau}(v_{i+1}) 
& \mbox{ and } \phi_{\tau'}(v_1 +\cdots + v_i) 
> \phi_{\tau'}(v_{i+1}+
\cdots +v_l), \\
\label{eigh2}
\phi_{\tau}(v_i)> \phi_{\tau}(v_{i+1}) 
& \mbox{ and } \phi_{\tau'}(v_1+\cdots +v_i) \le
 \phi_{\tau'}(v_{i+1}+
\cdots +v_l),
\end{align}
then define 
$S(\{v_1, \cdots, v_l\}, \tau, \tau')$ to be $(-1)^r$, where 
$r$ is the number of $i=1, \cdots, l-1$ satisfying (\ref{eith1}). 
Otherwise we define 
$S(\{v_1, \cdots, v_l\}, \tau, \tau')=0$.} 
\end{defi}
Another combinatorial coefficient is 
defined as follows. 
\begin{defi}\emph{\bf{\cite[Definition~4.4]{Joy4}}}\label{defi:U}
\emph{
For non-zero $v_1, \cdots, v_l \in C_{\sigma}(I)$,
 we
define 
\begin{align}\label{def:U}
&U(\{v_1, \cdots, v_l\}, \tau, \tau')=  
\sum_{1\le l'' \le l' \le l}
\sum_{\begin{subarray}{c}
\psi \colon \{1, \cdots, l\} \to \{1, \cdots, l'\},
\\
 \xi \colon \{1, \cdots, l'\} \to \{1, \cdots, l''\}.
\end{subarray}}\\
&\qquad \qquad \qquad \prod_{a=1}^{l''}
S(\{ w_i \}_{i\in \xi^{-1}(a)}, \tau, \tau') 
 \frac{(-1)^{l''}}{l''}
\prod_{b=1}^{l'}\frac{1}{\lvert \psi^{-1}(b)\rvert!}. \label{U}
\end{align}
Here $\psi$, $\xi$
satisfy the following. }
\begin{itemize}
\item \emph{$\psi$ and $\xi$ are non-decreasing surjective maps. }
\item \emph{  
We have 
\begin{align}\label{tau0}
\phi_{\tau}(v_i)=\phi_{\tau}(v_j),
\end{align}
for $1\le i, j\le l$ with $\psi(i)=\psi(j)$. }
\item \emph{For $1\le i, j\le l''$, we have} 
\begin{align}\label{tau}
\phi_{\tau'}(\sum_{k\in \psi^{-1}\xi^{-1}(i)}v_k)= 
\phi_{\tau'}(\sum_{k\in\psi^{-1}\xi^{-1}(j)}v_{k}).
\end{align}
\end{itemize}
\emph{Also $w_i$ for $1\le i\le l'$ is defined as} 
\begin{align}\label{wi}
w_i=\sum_{j\in \psi^{-1}(i)}v_j\in
C_{\sigma}(I).
\end{align}
\end{defi}
Note that if $S_{\varepsilon, v}(\sigma)\neq \{0\}$, 
then $v\in C_{\sigma}((\phi-\varepsilon, \phi+\varepsilon))$
for some $\phi \in \mathbb{R}$ and 
$S_{\varepsilon, v}(\sigma)\subset C_{\sigma}((\phi-\varepsilon, 
\phi+\varepsilon))$. By choosing $\varepsilon>0$ sufficiently small, 
we can take $I$ to be $I=(\phi-\varepsilon, \phi+\varepsilon)$
in Definition~\ref{def:S}
and Definition~\ref{defi:U}. 
In our situation, Joyce's
wall-crossing formula~\cite{Joy4} is applied to 
show the following. 
(See the discussion in~\cite[Section~2]{Tolim2}.) 
\begin{thm}\emph{\bf{\cite[Theorem~6.28, Equation (130)]{Joy4}}}
\label{prop:trans}
We have 
\begin{align}
\notag
J^v(\tau')=&\sum_{\begin{subarray}{c}l\ge 1, \ v_i \in 
S_{\varepsilon, v}(\sigma), \\
v_1+\cdots +v_l=v\end{subarray}}
\sum _{\begin{subarray}{c}
G \text{ \rm{is a connected, simply connected oriented}} \\
\text{\rm{graph with vertex} }\{1, \cdots, l\}, \
\stackrel{i}{\bullet}\to \stackrel{j}{\bullet}\text{ \rm{implies} }
i< j
\end{subarray}} \\
\label{Trans2}
&\qquad \frac{1}{2^{l-1}}U(\{v_1, \cdots, v_l\}, \tau, \tau')
\prod_{\stackrel{i}{\bullet} \to \stackrel{j}{\bullet}\text{ \rm{in} }G}
\chi(v_i, v_j)\prod_{i=1}^{l}J^{v_i}(\tau).
\end{align} 
\end{thm}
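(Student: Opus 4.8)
The plan is to deduce the formula from Joyce's wall-crossing machinery \cite{Joy4, Joy5} by lifting it to the Ringel--Hall algebra and then pushing it forward through the integration map $\Upsilon$; the reasoning runs parallel to~\cite[Section~2]{Tolim2}, the only genuinely new point being the restriction to the ``rank $\le 1$'' part of the Hall algebra forced by Assumption~\ref{assum}, since in our situation $\oO bj^{v}(\aA)$ is only known to be algebraic for $v\in\Gamma_0$ or $\rk(v)=1$.

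First I would fix $\sigma=(Z,\pP)\in\vV$, set $\aA=\pP((0,1])$, and choose $\varepsilon>0$ small enough that $S_{\varepsilon,v}(\sigma)$ is finite (Lemma~\ref{Sfin}) and that the open neighbourhood $U_\varepsilon$ and its wall-and-chamber decomposition of Lemma~\ref{wacham} are available; I work throughout inside the connected component $V_\varepsilon$ of $U_\varepsilon\cap\vV$ containing $\sigma$, with $\tau,\tau'\in V_\varepsilon$. Every class occurring in a decomposition $v=v_1+\cdots+v_l$ relevant to the wall-crossing lies in the finite set $S_{\varepsilon,v}(\sigma)$, and by the fourth, fifth and sixth conditions of Assumption~\ref{assum} together with Assumption~\ref{assum2} the elements $\delta^{v'}(\tau)$ and $\epsilon^{v'}(\tau)$ of $\hH_0(\aA)\oplus\hH_N(\aA)$ are well defined for all such $v'$ and all $\tau\in V_\varepsilon$, with every $\ast$-product below a finite sum.

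Next I would establish the purely combinatorial identity in the Hall algebra
\begin{align}\label{eq:hallwc}
\epsilon^v(\tau')=\sum_{\begin{subarray}{c}l\ge 1,\ v_i\in S_{\varepsilon,v}(\sigma),\\ v_1+\cdots+v_l=v\end{subarray}} U(\{v_1,\cdots,v_l\},\tau,\tau')\,\epsilon^{v_1}(\tau)\ast\cdots\ast\epsilon^{v_l}(\tau),
\end{align}
exactly as in \cite[Theorem~6.28]{Joy4}. This follows formally: one expands $\delta^{v'}(\tau')$ in terms of the $\delta$'s for $\tau$ supported in a small interval $I=(\phi-\varepsilon,\phi+\varepsilon)$ using the Harder--Narasimhan behaviour of semistable objects, introduces the $S$-coefficients of Definition~\ref{def:S} to record which filtration steps switch stability sign, passes from the $\delta$'s to the $\epsilon$'s, and then checks that the combinatorial identities satisfied by the $U$-coefficients in Definition~\ref{defi:U} collapse the resulting multiple sum to the right-hand side of (\ref{eq:hallwc}). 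The one deviation from Joyce's original setting is that $\hH_N(\aA)$ is a bimodule over the algebra $\hH_0(\aA)$ rather than a subalgebra; by Remark~\ref{rk} and Lemma~\ref{Sfin} each product in (\ref{eq:hallwc}) has exactly one factor of rank one and the rest in $\Gamma_0$, so the $\ast$-product used there is precisely the bimodule action and the manipulations go through verbatim.

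Finally I would apply the integration map. By~\cite[Section~6.2]{Joy5} the element $(t^2-1)\Upsilon(\epsilon^{v'}(\tau))$ is regular at $t=1$ with value $J^{v'}(\tau)$, and $\Upsilon$ carries the $\ast$-commutator to the antisymmetrised Euler pairing, so that in the limit $t\to 1$ only the totally antisymmetrised part of an iterated product $\epsilon^{v_1}(\tau)\ast\cdots\ast\epsilon^{v_l}(\tau)$ survives, contributing a factor $\chi(v_i,v_j)$ for each edge of a connected, simply connected oriented graph on $\{1,\cdots,l\}$ with $\stackrel{i}{\bullet}\to\stackrel{j}{\bullet}$ forcing $i<j$, together with the normalisation $2^{-(l-1)}$. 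Substituting into (\ref{eq:hallwc}) and taking $t\to1$ yields (\ref{Trans2}); convergence of the resulting series in $\kakkoS$ is controlled by the third condition of Assumption~\ref{assum2} and the finiteness in Lemma~\ref{Sfin}. I expect the main obstacle to be exactly the bookkeeping of the truncated Hall algebra: verifying that $\hH_0(\aA)\oplus\hH_N(\aA)$, the elements $\epsilon^v$, the Lie subalgebra they generate and the map $\Upsilon$ all make sense and retain Joyce's formal properties when only the $\Gamma_0$- and rank-one strata of $\oO bj(\aA)$ are known to be algebraic and of finite type --- once this is in place, (\ref{eq:hallwc}) and the Lie-algebra property of $\Upsilon$ are as in \cite{Joy4, Joy5}.
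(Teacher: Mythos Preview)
Your proposal is correct and matches the paper's treatment: the paper does not give an independent proof of this theorem but simply cites \cite[Theorem~6.28, Equation~(130)]{Joy4} together with the adaptation in \cite[Section~2]{Tolim2}, and your sketch faithfully outlines precisely that argument, including the one genuine point of care --- that under Assumption~\ref{assum} only the $\Gamma_0$- and rank-one pieces of the Hall algebra are available, so the relevant identity \eqref{eq:hallwc} must be read in the $\hH_0(\aA)$-bimodule $\hH_N(\aA)$ rather than in a full graded algebra.
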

\begin{rmk}
The property that the set of $\sigma \in \vV$ in which a
fixed $E\in \dD_X$ is semistable 
is closed (cf.~Remark~\ref{rmk:closed}) 
corresponds to the dominant condition in the sense 
of~\cite[Definition~3.16]{Joy4}. 
This property is not true for another 
generalized stability conditions 
whose central charges are polynomials~\cite{Bay}, \cite{Tolim}. 
This is one of the reasons we work over weak 
stability conditions rather than polynomial 
stability conditions. 
\end{rmk}
The above theorem immediately yields the 
following. 
\begin{prop-defi}\label{strict}
\emph{
For $v=(-n, -\beta, 0) \in \Gamma_0$, 
the value $J^v(\tau)$ does not depend on $\tau\in \vV$. 
We define} 
\begin{align}\label{defN}
\widehat{N}_{n, \beta}\cneq J^{v}(\tau),
\end{align}
\emph{for $\tau \in \vV$.} 
\end{prop-defi}
\begin{proof}
Since $\vV$ is connected by our assumption, the 
problem is local on $\vV$. 
Noting Lemma~\ref{Sfin} (i) and $\chi(\ast, \ast)=0$ 
on $\Gamma_0$, the formula 
(\ref{Trans2}) implies 
$J^v(\tau)=J^v(\tau')$
for any $\tau, \tau'\in V_{\varepsilon}$. 
\end{proof}
\subsection{Wall-crossing formula of  
generating functions}
Take $\sigma=(Z, \pP) \in \vV$ and a 
continuous family in $\vV$, 
$$(-\delta, \delta) \ni t \mapsto \sigma_t \in \vV,$$
with $\sigma_0=\sigma$
and $\delta >0$. 
By Lemma~\ref{wacham},
the following limiting series makes sense, 
\begin{align*}\widehat{\DT}(\sigma_{\pm}) &\cneq 
\lim_{t\to \pm 0}\widehat{\DT}(\sigma_{t}), \\
&=\sum_{n, \beta}\widehat{\DT}_{n, \beta}(\sigma_{\pm})x^n y^{\beta} \in 
\kakkoS,
\end{align*}
where $S$ is given in Assumption~\ref{assum2}. 
The series 
$$\widehat{\DT}_0(\sigma_{\pm}) \cneq 
\lim_{t\to \pm 0}\widehat{\DT}_0(\sigma_{t})
\in \kakkoT,$$
is also defined.
We set $W$ to be 
$$W\cneq \{v\in \Gamma_{0} :
Z(v)\in \mathbb{R}_{>0} Z(\oO_X)\}. $$
For any $v\in W$, we assume that 
\begin{align}\label{ass1}
&\arg Z_t(v)>\arg Z_{t}(\oO_X), \quad 0<t \ll 1, \\
\label{ass2}
&\arg Z_t(v)<\arg Z_{t}(\oO_X), \quad 0<-t \ll 1,
\end{align}
i.e. 
(\ref{good1}) happens 
at $t=0$. 
The following theorem is a generalization of
the result in~\cite{Tolim2}. 
\begin{thm}\label{main:DT}
We have the following equalities of the generating series, 
\begin{align}\label{thm:gen1}
\widehat{\DT}(\sigma_{-})&=
\widehat{\DT}(\sigma_{+})\cdot\prod_{-(n, \beta)\in W}
\exp(n\widehat{N}_{n, \beta}x^n y^{\beta}), \\
\label{thm:gen2}
\widehat{\DT}_{0}(\sigma_{-})&=
\widehat{\DT}_{0}(\sigma_{+})\cdot\prod_{-(n, \beta)\in W}
\exp(n\widehat{N}_{n, \beta}x^n y^{\beta}). 
\end{align}
\end{thm}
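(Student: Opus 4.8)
The plan is to localise the problem at $\sigma=\sigma_0$ and then run Joyce's wall-crossing formula (Theorem~\ref{prop:trans}). For each $v=(-n,-\beta,1)$ the set $S_{\varepsilon, v}(\sigma)$ is finite by Lemma~\ref{Sfin}~(ii), so Lemma~\ref{wacham} shows that $t\mapsto\widehat{\DT}_{n,\beta}(\sigma_t)$ is constant on $(0,\delta)$ and on $(-\delta,0)$ for $0<\delta\ll1$; together with the description (\ref{prolim}) of the topology this guarantees that the limits $\widehat{\DT}(\sigma_{\pm})$ and $\widehat{\DT}_0(\sigma_{\pm})$ exist in $\kakkoS$ and $\kakkoT$, and reduces the proof of (\ref{thm:gen1}) and (\ref{thm:gen2}) to comparing $J^{(-n,-\beta,1)}(\sigma_{-})$ with $J^{(-n,-\beta,1)}(\sigma_{+})$, where by local constancy $\sigma_{\pm}$ may be taken to be actual members $\sigma_{\pm t}$, $t>0$ small, of the family lying in the two chambers adjacent to $\sigma$. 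I would then invoke (\ref{Trans2}) with $\tau'=\sigma_{-}$ and $\tau=\sigma_{+}$.

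Next I would pin down which terms of (\ref{Trans2}) survive. Because $Z$ only detects the leading component of a class, $\phi_{\sigma}(v)$ equals the phase $\psi$ of $\oO_X$ in (\ref{phase1}) for every class $v$ of rank one; hence by Lemma~\ref{Sfin}~(ii) each decomposition $v=v_1+\cdots+v_l$ appearing in the sum has exactly one rank-one summand $v_e$, with all other $v_i$ lying in $\Gamma_0$, and for $\varepsilon$ small the constraint $\psi\in(1/2,1)\subset(0,1]$ together with the support property (\ref{support}) forces these $\Gamma_0$-summands to lie in the set $W$ of classes aligned with $\oO_X$ at $\sigma$. For such $v_i\in W$ one has $J^{v_i}(\sigma_{\pm})=\widehat{N}_{n_i,\beta_i}$ by Proposition-Definition~\ref{strict}. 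Finally, the pairing $\chi$ of (\ref{chi}) vanishes identically on $\Gamma_0\times\Gamma_0$, so the only graphs $G$ contributing are the stars centred at the vertex $e$, and then $\prod_{i\to j\text{ in }G}\chi(v_i,v_j)$ reduces to $\prod_{i\ne e}(\pm n_i)$, since $\chi(v_e,v_i)=-n_i$ and $\chi(v_i,v_e)=n_i$.

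After these reductions, proving (\ref{thm:gen1}) amounts to a purely combinatorial identity: one checks that the surviving coefficients in (\ref{Trans2}) --- the prefactor $2^{-(l-1)}$, the $U$-coefficients of Definition~\ref{defi:U} (built from the $S$-coefficients of Definition~\ref{def:S}), and the signs carried by the star orientations --- telescope to $1/(l-1)!$, so that summing over $l$ and over ordered decompositions turns (\ref{Trans2}) into
\begin{align*}
\widehat{\DT}(\sigma_{-})=\widehat{\DT}(\sigma_{+})\cdot\exp\left(\sum_{-(n,\beta)\in W}n\,\widehat{N}_{n,\beta}\,x^{n}y^{\beta}\right),
\end{align*}
which is (\ref{thm:gen1}) once the exponential of a sum is expanded as a product. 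A conceptually cleaner route to the same identity is to pass to the pro-nilpotent Lie algebra underlying Joyce's Hall-algebra construction: the classes in $\Gamma_0$ span an abelian Lie subalgebra (as $\chi$ vanishes there), the rank-one part is a module over it, and moving from $\sigma_{+}$ to $\sigma_{-}$ acts on the rank-one generators by the adjoint action of the single element $\sum_{v\in W}\widehat{N}\cdot(\text{generator of }v)$; bracketing with the generator of a rank-one class is multiplication by $\chi$, i.e. by the integer $n$, which reproduces exactly the claimed factor. Equation (\ref{thm:gen2}) is then obtained by restricting the identity to the subgroup $\Gamma_0\subset N_{\le1}(X)$: the exponential factor is supported there, so extracting the $\Gamma_0$-graded part commutes with multiplying by it, and $\widehat{\DT}_0(\sigma_{\pm})=\widehat{\DT}(\sigma_{\pm})|_{\Gamma_0}$ transforms by the same factor.

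The main obstacle is exactly this combinatorial collapse. A priori (\ref{Trans2}) ranges over all connected, simply-connected oriented graphs and involves the intricate $U$- and $S$-coefficients, and one has to verify that in the present very rigid configuration --- one distinguished rank-one class joined to several rigid classes all aligned with $\oO_X$ --- everything degenerates to a plain exponential; the vanishing of $\chi$ on $\Gamma_0$ is precisely what makes this work, and the argument runs parallel to~\cite[Section~2]{Tolim2} and to the proof of~\cite[Theorem~6.28]{Joy4}. A secondary, routine point is to check that all the infinite sums and products occurring along the way genuinely lie in $\kakkoS$ and $\kakkoT$, which follows from Lemma~\ref{Sfin}, Assumption~\ref{assum2}, and the identification (\ref{prolim}).
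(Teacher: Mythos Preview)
Your outline matches the paper's proof: apply (\ref{Trans2}) with $\tau'=\sigma_{-}$, $\tau=\sigma_{+}$, use the vanishing of $\chi$ on $\Gamma_0\times\Gamma_0$ to reduce the graphs to stars centred at the unique rank-one vertex $e$, and then collapse the combinatorics to the factor $1/(l-1)!$ (the paper does this explicitly: $S=(-1)^{l-e}$ with $e\in\{1,2\}$, hence $U=(-1)^{l-e}/((e-1)!(l-e)!)$, the star contributes $(-1)^{l-e}\prod_{i\ne e}n_i$, and $\sum_{e}2^{-(l-1)}/((e-1)!(l-e)!)=1/(l-1)!$).

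One point in your second paragraph is misattributed and would mislead you if you tried to carry it out. You write that the support property forces the $\Gamma_0$-summands $v_i$ to lie in $W$; this is false. The set $S_{\varepsilon,v}(\sigma)\cap\Gamma_0$ genuinely contains classes whose $\sigma$-phase is merely within $2\varepsilon$ of $\psi$, not equal to it, and (\ref{Trans2}) sums over all of them. The restriction to $W$ is the \emph{output} of the combinatorial analysis, not an input: one shows (this is the paper's Step~1(ii)) that $S(\{v_1,\ldots,v_l\},\sigma_+,\sigma_-)=0$ unless every $v_i$ with $i\ne e$ lies in $W$ and $e\in\{1,2\}$, using the specific wall-crossing hypotheses (\ref{ass1})--(\ref{ass2}) and the constancy condition (\ref{extra}); this is then upgraded to the same vanishing for $U$. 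So the ``combinatorial collapse'' you correctly flag as the main obstacle already contains the reduction to $v_i\in W$, and you should not expect to get it for free beforehand. Your Lie-algebra reformulation and your restriction argument for (\ref{thm:gen2}) are fine; the paper simply says (\ref{thm:gen2}) is proved in the same way as (\ref{thm:gen1}).
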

\begin{proof}
We only show (\ref{thm:gen1}), as
(\ref{thm:gen2}) is similarly proved.  
The proof goes along with the same
argument of~\cite[Theorem~4.7]{Tolim2}. 
Take $v\in \Gamma$ with $\rk(v)=1$  
and $\varepsilon>0$ so that 
$S_{\varepsilon, v}(\sigma)$ is a finite set. 
For elements 
$v', v''\in S_{\varepsilon, v}(\sigma)$, we write 
$$\phi_{\pm}(v') \le \phi_{\pm}(v''),$$
if 
$\phi_{\sigma_t}(v') \le \phi_{\sigma_t}(v'')$
holds for $0<\pm t \ll 1$. 
 Note that for $v'\in S_{\varepsilon, v}(\sigma)$, 
 we have 
 \begin{align}\label{iff}
 \phi_{\pm}(v')=\phi_{\pm}(\oO_X) \quad \mbox{ if and only if }
 \rk(v')=1, 
 \end{align}
 by (\ref{ass1}) and (\ref{ass2}). 
 For $v_1, \cdots, v_l \in S_{\varepsilon, v}(\sigma)$, 
  we can take the limit of the combinatorial 
 coefficients, 
 \begin{align*}
 S(\{v_1, \cdots, v_l\}, \sigma_{+}, \sigma_{-})
 &\cneq \lim_{t\to +0}S(\{v_1, \cdots, v_l\}, \sigma_{t}, \sigma_{-t}), \\
 U(\{v_1, \cdots, v_l\}, \sigma_{+}, \sigma_{-})
 &\cneq \lim_{t\to +0}U(\{v_1, \cdots, v_l\}, \sigma_{t}, \sigma_{-t}).
 \end{align*}
 \begin{step}\label{step1}
(i) If $v_i \in W$ for all $i$, we have 
\begin{align}\label{10}
S(\{v_1, \cdots, v_l\}, \sigma_{+}, \sigma_{-})= 
\left\{ \begin{array}{cc}1, & l=1, \\
0, & l\ge 2. \end{array} \right. 
\end{align}

(ii)
Suppose that
there is $1\le e\le l$ such that $\rk(v_e)=1$
and $v_i \in \Gamma_{0}$ for $i\neq e$. 
If
$S(\{v_1, \cdots, v_l\}, \sigma_{+}, \sigma_{-})\neq 0$, then 
$v_i \in W$ for all $i\neq e$.
Moreover in this case, we have $e=1$ or $2$ and 
\begin{align}\label{l-e}
S(\{v_1, \cdots, v_l\}, \sigma_{+}, \sigma_{-})=(-1)^{l-e}.
\end{align}
\end{step}
\begin{proof}
(i) By the assumption (\ref{extra}), 
$\phi_{\sigma_t}(v)\ge \phi_{\sigma_t}(v')$ holds
for $v, v'\in W$ if 
and only if this holds at $t=0$. 
Then (\ref{10}) follows easily from the definition of 
$S(\{v_1, \cdots, v_l\}, \sigma_{+}, \sigma_{-})$. 
(See the proof of~\cite[Theorem~4.5]{Joy1}.)

(ii) Suppose that there is $2\le i<e$ such that 
 \begin{align}\label{st1}
 \phi_{+}(v_1)>\cdots >\phi_{+}(v_i)\le \phi_{+}(v_{i+1}),
 \end{align}
 holds. 
 By the definition of 
 $S(\{v_1, \cdots, v_l\}, \sigma_{+}, \sigma_{-})$
 and $\phi_{+}(v_1)>\phi_{+}(v_2)$, we have 
 \begin{align}\label{hold}
 \phi_{-}(v_1) \le \phi_{-}(v_2+\cdots +v_l)=\phi_{-}(v_e). 
 \end{align}
 On the other hand, by 
 $\phi_{+}(v_{i-1})>\phi_{+}(v_i)\le \phi_{+}(v_{i+1})$
 we have 
 \begin{align*}
 \phi_{-}(v_1+\cdots +v_{i-1}) \le 
 \phi_{-}(v_{i}+\cdots +v_l)=\phi_{-}(v_e), \\
 \phi_{-}(v_1+\cdots +v_i)> \phi_{-}(v_{i+1}+\cdots +v_l)=\phi_{-}(v_e),
 \end{align*}
 which implies 
  $\phi_{-}(v_i)>\phi_{-}(v_e)$
  and contradicts that (\ref{hold}). 
  Hence a sequence (\ref{st1}) does not happen. 
  Similarly a sequence 
  \begin{align*}
 \phi_{+}(v_1)\le \cdots \le \phi_{+}(v_i)> \phi_{+}(v_{i+1}),
 \end{align*}
  does not happen for $2\le i<e$. 
  Therefore we have two possibilities, 
  \begin{align}\label{two1}
  \phi_{+}(v_1)>\cdots >\phi_{+}(v_{e-1})>\phi_{+}(v_e), \\
  \label{two2}
  \phi_{+}(v_1)\le \cdots \le \phi_{+}(v_{e-1}) \le \phi_{+}(v_e).
  \end{align}
  In the case of (\ref{two1}), 
  (resp.~(\ref{two2}),)
  the definition of 
  $S(\{v_1, \cdots, v_l\}, \sigma_{+}, \sigma_{-})$ implies
  that 
   $\phi_{-}(v_1) \le \phi_{-}(v_e)$, 
  (resp.~$\phi_{-}(v_1)>\phi_{-}(v_{e})$.)
  Hence $v_1 \in W$,
   and by our assumptions (\ref{ass1}) and (\ref{ass2}), 
  the inequality (\ref{two2}) 
  does not happen, hence we have (\ref{two1}). 
  Suppose that $e\ge 3$. Then we have
  $\phi_{-}(v_1+v_2)\le \phi_{-}(v_e)$, hence 
  $v_1+v_2 \in W$. Since $v_1 \in W$, we also have 
  $v_2 \in W$, which contradicts to $\phi_{+}(v_1)>\phi_{+}(v_2)$. 
  Hence we have either $e=1$ or $e=2$. 
   A similar argument 
  also shows $v_i \in W$ for $i>e$. 
  
  Conversely if $e=1$ or $e=2$ and $v_i \in W$ for
  $i\neq e$, 
  it is easy to see that   
 one of
  (\ref{eith1}) or (\ref{eigh2})
  holds for each $i$. 
  Hence (\ref{l-e}) holds by the definition
  of $S((\{v_1, \cdots, v_l\}, \sigma_{+}, \sigma_{-})$.   
  \end{proof}
\begin{step}\label{step2}
Take $v_1, \cdots, v_l\in S_{\varepsilon, v}(\sigma)$ 
satisfying  
$\rk(v_{e})=1$ and $v_i \in \Gamma_{0}$ for 
$i\neq e$. 
 Then 
 $U(\{v_1, \cdots, v_l\}, \sigma_{+}, \sigma_{-})$
 is non-zero only if $v_i \in W$ for $i\neq e$. 
 In this case, we have 
 \begin{align}\label{Uex}
U(\{v_1, \cdots, v_l\}, \sigma_{+}, \sigma_{-})
=\frac{(-1)^{l-e}}{(e-1)!(l-e)!}.
\end{align}
\end{step}
\begin{proof}
Let 
\begin{align*}
&\psi \colon \{1, \cdots, l\} \to \{1, \cdots, l'\}, \\
&\xi \colon \{1, \cdots, l'\} \to \{1, \cdots, l''\},
\end{align*}
be maps which appear in a non-zero term of 
(\ref{def:U}). 
By (\ref{tau})
and (\ref{iff}), we have $l''=1$. 
Also by Step~\ref{step1} (ii), 
$\psi(e)$ is either $1$ or $2$, and
$\psi(e)=1$ (resp.~$\psi(e)=2$) is 
equivalent to $e=1$, (resp.~$e\ge 2$,)
by (\ref{tau0}) and (\ref{iff}), 
and $\psi^{-1}\psi(e)=\{e\}$ holds. 
By Step~\ref{step1} (ii), each $w_i$ 
defined by (\ref{wi}) is contained in $W$
for $i\neq \psi(e)$. Noting the condition
(\ref{tau0}), 
we conclude that $v_i \in W$ for any $i\neq e$. 
  
Now
substituting (\ref{l-e}) yields, 
\begin{align}\label{Uex2}
&U(\{v_1, \cdots, v_l\}, \sigma_{+}, \sigma_{-})
= \\
&\qquad \qquad 
\sum_{\psi \colon \{e+1, \cdots, l\} \to 
\{1, \cdots, l'\}}
(-1)^{l'}\frac{1}{(e-1)!}
\prod_{b=1}^{l'}\frac{1}{\lvert \psi^{-1}(b)\rvert!},
\end{align}
where $\psi$ are non-decreasing surjective maps. 
For a fixed $l$, we have  
\begin{align}\label{element}
\sum_{\psi \colon \{1, \cdots, l\}
 \to \{1, \cdots, l'\}}(-1)^{l-l'}
 \prod_{b=1}^{l'}
 \frac{1}{\lvert \psi^{-1}(b)\rvert !}
=\frac{1}{l!},
\end{align}
for non-decreasing surjective maps $\psi$. 
(See~\cite[Proposition~4.9]{Joy4}.)
Hence we obtain (\ref{Uex}). 
\end{proof}
\begin{step}
For $v=(-n, -\beta, 1) \in \Gamma$, 
we have the formula, 
\begin{align}\label{DTform}
\widehat{\DT}_{n, \beta}(\sigma_{-})=
\sum_{\begin{subarray}{c}
 n_1+ \cdots +n_l=n, \\
\beta_1+\cdots +\beta_l=\beta, \\
-(n_i, \beta_i)\in W, \ 1\le i\le l-1.
\end{subarray}}
\frac{1}{(l-1)!}\prod_{i=1}^{l-1}n_i \widehat{N}_{n_i, \beta_i}
\widehat{\DT}_{n_l, \beta_l}(\sigma_{+}).
\end{align}
\begin{proof}
We apply the formula (\ref{Trans2}) for 
$\tau'=\sigma_{-t}$, $\tau=\sigma_{t}$ for $0<t \ll 1$. 
Take $v_1, \cdots, v_l \in S_{\varepsilon, v}(\sigma)$ which 
appear in a non-zero term of (\ref{Trans2}). 
By Step~\ref{step2},
Lemma~\ref{Sfin} (ii) and Remark~\ref{rk},  
there is $1\le e \le l$ such that 
$v_i \in W$ for $i\neq e$ and $\rk(v_e)=1$. 
Let us write $v_i=(-n_i, -\beta_i, 0)$
for $i\neq e$ and $v_e=(-n_e, -\beta_e, 1)$. 
Since we have 
\begin{align*}
\chi(v_i, v_j)=0, \quad (i, j \neq e), 
\quad \chi(v_i, v_e)=n_i,
\quad \chi(v_e, v_i)=-n_i, 
\end{align*}
an oriented graph $G$ which appears in (\ref{Trans2}) 
is of the following form, 
$$\xymatrix{
1 \bullet \ar[dr] &   & \bullet e+1 \\
\vdots \ar[r] & \bullet e \ar[ur] \ar[dr] \ar[r] &\vdots \\
e-1 \bullet \ar[ur] & & \bullet l.
}$$
Hence substituting (\ref{Uex}) to (\ref{Trans2}), we obtain 
\begin{align*}
\widehat{\DT}_{n, \beta}(\sigma_{-})=\sum_{1\le e\le l}
\sum_{\begin{subarray}{c}
 n_1+ \cdots +n_l=n, \\
\beta_1+\cdots +\beta_l=\beta, \\
-(n_i, \beta_i)\in W, \ i\neq e.
\end{subarray}}
&\frac{1}{2^{l-1}(e-1)!(l-e)!} \\
&\quad \prod_{i\neq e}n_i \widehat{N}_{n_i, \beta_i}
\widehat{\DT}_{n_e, \beta_e}(\sigma_{+}).
\end{align*}
Noting that 
 $$\sum_{1\le e\le l} \frac{1}{2^{l-1}(e-1)!(l-e)!}=\frac{1}{(l-1)!}, $$
 we obtain the formula (\ref{DTform}).
\end{proof}
\end{step}
Obviously (\ref{DTform}) implies (\ref{thm:gen1}) as expected. 
\end{proof}
\begin{rmk}
Since $\sum_{-(n, \beta)\in W}n\widehat{N}_{n, \beta}x^n y^{\beta}$ belongs to 
$\kakkoT$ by (\ref{T1}), the formulas (\ref{thm:gen1}),
(\ref{thm:gen2}) make sense. 
\end{rmk}
Next we compare $\widehat{\DT}(\sigma)$
and $\widehat{\DT}(\tau)$ for 
two weak stability conditions $\sigma, \tau \in \vV$. 
We introduce the notion of general points in $\vV$. 
\begin{defi}\label{defi:general}
\emph{
We say $\sigma=(Z, \pP) \in \vV$ is \textit{general}
if there is no $v\in \Gamma_{0}$ which 
satisfies $Z(v)\in \mathbb{R}_{>0}Z(\oO_X)$.} 
\end{defi}
For general $\sigma, \tau \in \vV$, take 
a good path, (cf.~Definition~\ref{def:good},) 
$$[0, 1] \ni t \mapsto \sigma_t =(Z_t, \pP_t)
\in \vV, $$
which satisfies $\sigma_0=\sigma$ and $\sigma_1=\tau$. 
For $c\in [0, 1]$, let
$W_c$ be the set, 
$$W_c=\{v\in \Gamma_{0} :
Z_{c}(v)\in \mathbb{R}_{>0}Z_{c}(\oO_X)\}.$$  
For $c\in [0, 1]$, we set $\epsilon(c)=1$ 
(resp.~$\epsilon(c)=-1$,)
if
(\ref{good2}) (resp.~(\ref{good1})) happens at $t=c$. 
As a corollary of Theorem~\ref{main:DT}, we obtain the following. 
\begin{cor}\label{path}
We have the equalities of the generating series, 
\begin{align}\label{cor:gen}
\widehat{\DT}(\tau)&=
\widehat{\DT}(\sigma)\cdot \prod_{\begin{subarray}{c}
-(n, \beta) \in W_c, \\
c \in (0, 1).
\end{subarray}}
\exp(n\widehat{N}_{n, \beta}x^n y^{\beta})^{\epsilon(c)}, \\
\label{cor:gen2}
\widehat{\DT}_{0}(\tau)&=
\widehat{\DT}_{0}(\sigma)\cdot \prod_{\begin{subarray}{c}
-(n, \beta) \in W_c, \\
c \in (0, 1).
\end{subarray}}
\exp(n\widehat{N}_{n, \beta}x^n y^{\beta})^{\epsilon(c)}.
\end{align}
\end{cor}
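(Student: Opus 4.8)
The plan is to derive Corollary~\ref{path} from Theorem~\ref{main:DT} by patching the single-wall jump formula along the chosen good path $[0,1]\ni t\mapsto\sigma_t=(Z_t,\pP_t)$, $\sigma_0=\sigma$, $\sigma_1=\tau$. The first step is bookkeeping about orientations. Fix $c\in(0,1)$ with $W_c\neq\{0\}$. If (\ref{good1}) holds at $t=c$, then the sub-family $\{\sigma_{c+t}\}_{|t|\ll 1}$ satisfies (\ref{ass1}) and (\ref{ass2}), so Theorem~\ref{main:DT} applied with base point $\sigma_c$ and $W=W_c$ gives $\widehat{\DT}(\sigma_{c-})=\widehat{\DT}(\sigma_{c+})\cdot P_c$ with $P_c=\prod_{-(n,\beta)\in W_c}\exp(n\widehat{N}_{n,\beta}x^ny^\beta)$, and here $\epsilon(c)=-1$; if instead (\ref{good2}) holds at $t=c$, applying the same theorem to the time-reversed family $\{\sigma_{c-t}\}_{|t|\ll 1}$ gives $\widehat{\DT}(\sigma_{c+})=\widehat{\DT}(\sigma_{c-})\cdot P_c$, and here $\epsilon(c)=+1$. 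In both cases one obtains the uniform identity
\[
\widehat{\DT}(\sigma_{c+})=\widehat{\DT}(\sigma_{c-})\cdot P_c^{\,\epsilon(c)},\qquad
\widehat{\DT}_0(\sigma_{c+})=\widehat{\DT}_0(\sigma_{c-})\cdot P_c^{\,\epsilon(c)},
\]
the second from (\ref{thm:gen2}); that $P_c\in\kakkoT$ is part of Theorem~\ref{main:DT}.

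Next I would show that $t\mapsto\widehat{\DT}(\sigma_t)$ and $t\mapsto\widehat{\DT}_0(\sigma_t)$ are locally constant on $(0,1)\setminus\Sigma$, where $\Sigma\cneq\{c\in(0,1):W_c\neq\{0\}\}$, and that one-sided limits exist at each point of $\Sigma$. The existence of limits is immediate from Lemma~\ref{wacham}, which makes each $\mM^{v'}(\sigma_t)$, hence each coefficient of $\widehat{\DT}(\sigma_t)$, constant on the connected components of a punctured neighborhood. For local constancy, fix $t_0$ with $W_{t_0}=\{0\}$ and $v=(-n,-\beta,1)$; by Lemma~\ref{Sfin} only finitely many $v'\in\Gamma_0$ are relevant to $v$, so $W_{t_0}=\{0\}$ together with continuity of $t\mapsto Z_t$ shows that none of them has the phase of $\oO_X$ for $t$ near $t_0$, while (\ref{extra}) keeps the relative phase order of any two classes of $\Gamma_0$ fixed throughout $\vV$. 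Since $\chi$ vanishes on $\Gamma_0\times\Gamma_0$, every $l\ge 2$ term in Joyce's formula (Theorem~\ref{prop:trans}) then vanishes for $\tau,\tau'$ near $\sigma_{t_0}$, so $J^{(-n,-\beta,1)}(\sigma_t)$ is locally constant near $t_0$; combined with the $\tau$-independence of $\widehat{N}_{n,\beta}$ (Proposition-Definition~\ref{strict}), this gives local constancy of both series on $(0,1)\setminus\Sigma$.

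Finally I would control the walls and compose. By Definition~\ref{def:good}, whenever $Z_t(v)\in\mathbb{R}_{>0}Z_t(\oO_X)$ for $v\in\Gamma_0$ the crossing is transversal — one of (\ref{good2}), (\ref{good1}) holds on both sides — so by continuity of $t\mapsto Z_t$ such $t$ are isolated; hence for each fixed $(n,\beta)$ the set $\{c\in\Sigma:-(n,\beta)\in W_c\}$ is finite, and, using also (\ref{T1}) and the finiteness of decompositions in Assumption~\ref{assum2}, the infinite products in (\ref{cor:gen}), (\ref{cor:gen2}) satisfy the convergence condition (\ref{sharp}) and lie in $\kakkoT$. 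Working in $\kakkoS$ through the inverse-limit presentation (\ref{prolim}), the image of $\widehat{\DT}(\sigma_t)$ in $\kakkoS/\kakkoSl$ is affected by only finitely many walls $c_1<\cdots<c_k$ of $(0,1)$; it is constant on the complementary subintervals and jumps by $P_{c_j}^{\epsilon(c_j)}$ across $c_j$, so composing gives (\ref{cor:gen}), and the same argument with (\ref{thm:gen2}) gives (\ref{cor:gen2}). I expect the only real subtlety to be this last organizational step — turning the finitely many local jumps of Theorem~\ref{main:DT} into the global product and checking independence of the chosen good path, which is forced by Proposition-Definition~\ref{strict} and (\ref{extra}); the substantive geometry is already contained in Lemma~\ref{wacham} and Theorem~\ref{main:DT}, so no new estimates are needed.
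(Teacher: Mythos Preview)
Your approach is essentially the paper's: project to $\kakkoS/\kakkoSl$ via (\ref{prolim}), use Lemma~\ref{wacham} to reduce to finitely many wall-crossings along the path, apply Theorem~\ref{main:DT} at each, and compose; your orientation bookkeeping for $\epsilon(c)$ makes explicit what the paper leaves implicit.

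One caveat: the sentence ``since $\chi$ vanishes on $\Gamma_0\times\Gamma_0$, every $l\ge 2$ term in Joyce's formula then vanishes for $\tau,\tau'$ near $\sigma_{t_0}$'' is not a valid argument for rank-one $v$. In (\ref{Trans2}) exactly one $v_e$ has $\rk(v_e)=1$, and the edges of $G$ incident to $v_e$ carry $\chi(v_i,v_e)=n_i$, which need not vanish; the vanishing of $\chi$ on $\Gamma_0\times\Gamma_0$ only forces the graph to be a star centered at $v_e$, not the whole term to be zero. Fortunately this step is redundant: local constancy of each coefficient $J^v(\sigma_t)$ away from $\Sigma$ already follows from Lemma~\ref{wacham}, since constancy of the moduli stacks $\mM^{v'}(\sigma_t)$ for all $v'\in S_{\varepsilon,v}(\sigma_{t_0})$ gives constancy of $\delta^{v'}$, hence of $\epsilon^v$ and $J^v$---and that is exactly how both you (in the preceding sentence and the final paragraph) and the paper argue.
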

\begin{proof}
We only show (\ref{cor:gen}), as (\ref{cor:gen2}) is 
similarly proved. 
It is enough to show the equality (\ref{cor:gen})  
after the projection, 
$$\pi_{\lambda}\colon \kakkoS\to 
\kakkoS/\kakkoSl,$$
where $\{S_{\lambda}\}_{\lambda \in \Lambda}$
is given in Assumption~\ref{assum2}. 
By Lemma~\ref{wacham},  
there is a finite number of points 
$$c_0=0 <c_1 <\cdots < c_{k-1}< c_k=1,$$
 such that
 $\pi_{\lambda}\widehat{\DT}({\sigma_{t}})$ 
 is constant on $t\in (c_{i-1}, c_i)$ 
 for $1<i <k-1$, and constant on $[0, c_1)$, 
 $(c_{k-1}, 1]$ since $\sigma$ and $\tau$ are general. 
Applying Theorem~\ref{main:DT}
at each $t=c_i$, we obtain 
$$
\pi_{\lambda}\widehat{\DT}(\tau)=
\pi_{\lambda}\widehat{\DT}(\sigma)\cdot \pi_{\lambda}
\prod_{\begin{subarray}{c}
-(n, \beta) \in W_{c_i}, \\
0<i<k.
\end{subarray}}
\exp(n\widehat{N}_{n, \beta}x^n y^{\beta})^{\epsilon(c_i)}.
$$
On the other hand, since $S\setminus S_{\lambda}$ is a finite set, 
there is only finitely many $c\in (0, 1)$ 
such that 
$$\pi_{\lambda} \prod_{-(n, \beta)\in W_c}
\exp(n\widehat{N}_{n, \beta}x^n y^{\beta})
\neq 1.$$
Also such $c\in (0, 1)$ must be equal to one of $c_i$, 
since otherwise 
$\pi_{\lambda}\widehat{\DT}(\sigma_t)$ is constant 
near $t=c$, 
and contradicts to Theorem~\ref{main:DT}.  
Hence (\ref{cor:gen}) holds after the projection. 
\end{proof}
\begin{rmk}
For $c\in [0, 1]$, set
$f_c=\sum_{-(n, \beta)\in W_c}n\widehat{N}_{n, \beta}x^n y^{\beta}$. 
Then $f_c \in \kakkoT$ and 
$\{f_c\}_{c\in [0, 1]}$ satisfy the condition (\ref{sharp})
since $t\mapsto \sigma_t$ is a good path. Therefore 
the formulas (\ref{cor:gen}), (\ref{cor:gen2}) make sense. 
\end{rmk}
Another corollary is the following. 
\begin{cor}\label{cor:inde}
The series 
\begin{align}\label{frac:DT}
\widehat{\DT}'(\sigma)=
\frac{\widehat{\DT}(\sigma)}{\widehat{\DT_0}(\sigma)}
\in \kakkoS,
\end{align}
does not depend on general 
$\sigma\in \vV$. 
\end{cor}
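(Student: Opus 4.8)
The plan is to obtain this as an immediate consequence of the wall-crossing formulas in Corollary~\ref{path}, using the key observation that the multiplicative correction factor appearing there is \emph{the same} for $\widehat{\DT}$ and for $\widehat{\DT}_0$, so that it cancels in the quotient. Concretely, given two general points $\sigma, \tau \in \vV$ in the sense of Definition~\ref{defi:general}, the last condition of Assumption~\ref{assum} supplies a good path $[0,1]\ni t\mapsto \sigma_t = (Z_t, \pP_t) \in \vV$ with $\sigma_0 = \sigma$ and $\sigma_1 = \tau$, which is exactly the hypothesis of Corollary~\ref{path}.

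Applying (\ref{cor:gen}) and (\ref{cor:gen2}) along this path I would write $\widehat{\DT}(\tau) = \widehat{\DT}(\sigma)\cdot P$ and $\widehat{\DT}_0(\tau) = \widehat{\DT}_0(\sigma)\cdot P$, where
$$P = \prod_{\begin{subarray}{c}-(n,\beta)\in W_c, \\ c\in(0,1)\end{subarray}}\exp(n\widehat{N}_{n,\beta}x^n y^{\beta})^{\epsilon(c)} \in \kakkoT$$
is one and the same element in both formulas (the product making sense by the remark after Corollary~\ref{path}). I would then observe that $P$ is invertible in $\kakkoT$: no $v\in W_c$ equals $0$, since $Z_c(0)=0\notin\mathbb{R}_{>0}Z_c(\oO_X)$, so every exponent $n\widehat{N}_{n,\beta}x^n y^{\beta}$ has zero constant term and $P = 1+\cdots$. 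Since $\widehat{\DT}_0(\sigma)$ and $\widehat{\DT}_0(\tau)$ are likewise of the form $1+\cdots$ by the first condition of Assumption~\ref{assum}, the quotients defining $\widehat{\DT}'(\sigma)$ and $\widehat{\DT}'(\tau)$ are well-defined in $\kakkoS$, and cancelling $P$ gives
$$\widehat{\DT}'(\tau) = \frac{\widehat{\DT}(\tau)}{\widehat{\DT}_0(\tau)} = \frac{\widehat{\DT}(\sigma)\cdot P}{\widehat{\DT}_0(\sigma)\cdot P} = \frac{\widehat{\DT}(\sigma)}{\widehat{\DT}_0(\sigma)} = \widehat{\DT}'(\sigma).$$
As $\sigma, \tau$ range over all general points of $\vV$, independence follows. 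Alternatively, one can avoid inverting $P$ altogether by cross-multiplying to $\widehat{\DT}(\tau)\,\widehat{\DT}_0(\sigma) = \widehat{\DT}(\sigma)\,\widehat{\DT}_0(\tau)$ and then dividing by the unit $\widehat{\DT}_0(\sigma)\,\widehat{\DT}_0(\tau)$.

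I do not expect a serious obstacle here once Corollary~\ref{path} is available: the entire substance — the existence of good paths, the wall-and-chamber structure controlling where $\widehat{\DT}$ jumps, and above all the fact that $\widehat{\DT}$ and $\widehat{\DT}_0$ transform by the \emph{identical} factor across each wall — has already been established in Sections~\ref{sec:walcro} and~\ref{sec:Wcro}. The only points to watch are the bookkeeping in the completed ring $\kakkoS$: that the relevant quotients and infinite products converge (guaranteed by Assumption~\ref{assum2}, via the projections $\pi_\lambda$ used in the proof of Corollary~\ref{path}) and that the denominators are units. So the essential step is merely to verify, by inspection of the statements of (\ref{cor:gen}) and (\ref{cor:gen2}), that the correction factor $P$ is genuinely insensitive to whether one is transforming the rank-one series $\widehat{\DT}$ or its rank-zero sub-series $\widehat{\DT}_0$.
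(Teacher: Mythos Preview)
Your proof is correct and follows the same approach as the paper, which simply writes ``This follows immediately from Assumption~\ref{assum} and Corollary~\ref{path}.'' You have merely spelled out the cancellation of the common factor $P$ that the paper leaves implicit.
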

\begin{proof}
This follows immediately from Assumption~\ref{assum} and
Corollary~\ref{path}. 
\end{proof}

\begin{rmk}\label{count0}
In the proof of Theorem~\ref{main:dtpt}, 
we can apply Corollary~\ref{path} and obtain
the following formula, 
\begin{align*}
\widehat{\DT}_0(X)&=\widehat{\DT}_{0}(\sigma_{\xi}) \\
&=\prod_{n>0}\exp(n\widehat{N}_{n, 0}x^n)\widehat{\DT}_{0}(\sigma_{\xi'}) \\
&=\prod_{n>0}\exp(n\widehat{N}_{n, 0}x^n).
\end{align*}
On the other hand, we know that $\widehat{\DT}_{0}(X)=M(x)^{\chi(X)}$. 
These equalities give a calculation of $\widehat{N}_{n, 0}$. 
An easy computation shows, 
$$\widehat{N}_{n, 0}=\sum_{r|n}\frac{\chi(X)}{r^2}.$$
\end{rmk}

\section{Some technical lemmas}
\label{sec:tech}
\subsection{Proof of Proposition~\ref{t-str}.}
\begin{proof}
We first show the following lemma. 
\begin{lem}\label{cok}
(i) For $F\in \langle E, \aA'\rangle_{\ex}\subset \aA$, 
let $u\colon E\to F$ be a non-zero morphism in 
$\aA$. Then $\Cok(u) \in \aA$ is contained in 
$\langle E, \aA'\rangle_{\ex}$ and $u$ is injective in 
$\aA$. 

(ii)
For $F\in \langle E, \aA'\rangle_{\ex}\subset \aA$
and $G\in \aA'$, let
$u\colon G \to F$ be a non-zero morphism in $\aA$.
Then $\Cok(u)\in \aA$ is contained in
$\langle E, \aA'\rangle_{\ex}$. 
\end{lem}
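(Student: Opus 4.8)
Here is the plan. Both parts will be proved together by induction on the length $\ell$ of a filtration $0=F^0\subset F^1\subset\cdots\subset F^\ell=F$ whose successive quotients $F^i/F^{i-1}$ are each isomorphic to $E$ or lie in $\aA'$; such a filtration exists for every $F\in\langle E,\aA'\rangle_{\ex}$ because, by definition, this category is generated under extensions by $E$ together with the objects of $\aA'$. The ingredients are exactly the hypotheses of Proposition~\ref{t-str}: the vanishings $\Hom(E,G)=\Hom(G,E)=0$ for $G\in\aA'$ (see (\ref{cond})), the scalar condition $\End(E)=\mathbb{C}$, the fact that $\aA'$ is closed under subobjects and quotients in $\aA$, and the tautological fact that $\langle E,\aA'\rangle_{\ex}$ is closed under extensions in $\aA$. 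One elementary consequence, used repeatedly, is that $E$ has no nonzero subobject in $\aA'$: an inclusion $0\neq G\hookrightarrow E$ with $G\in\aA'$ would be a nonzero element of $\Hom(G,E)$.

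For (i), fix a nonzero $u\colon E\to F$ and induct on $\ell$. If $\ell=1$ then $F\cong E$ (the possibility $F\in\aA'$ is excluded, since then $\Hom(E,F)=0$ forces $u=0$), so $u$ is a nonzero scalar, hence an isomorphism with $\Cok(u)=0$. If $\ell\geq 2$, peel off the top quotient: $0\to F'\to F\xrightarrow{\pi}H\to 0$ with $\ell(F')=\ell-1$ and $H\cong E$ or $H\in\aA'$. If $H\cong E$ and $\pi\circ u\neq 0$, then $\pi\circ u$ is an invertible endomorphism of $E$, so $u$ is a split monomorphism, $F\cong E\oplus F'$, and $\Cok(u)\cong F'\in\langle E,\aA'\rangle_{\ex}$. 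Otherwise $\pi\circ u=0$ (this is automatic when $H\in\aA'$ because $\Hom(E,H)=0$), so $u$ factors as $E\xrightarrow{u'}F'\hookrightarrow F$ with $u'\neq 0$; by induction $u'$ is injective with $\Cok(u')\in\langle E,\aA'\rangle_{\ex}$, hence $u$ is injective, and the short exact sequence $0\to\Cok(u')\to\Cok(u)\to H\to 0$ exhibits $\Cok(u)$ as an extension, so it lies in $\langle E,\aA'\rangle_{\ex}$.

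For (ii), first note that $\im(u)$ is a quotient of $G\in\aA'$, hence $\im(u)\in\aA'$, and $\Cok(u)$ is the cokernel of the monomorphism $\im(u)\hookrightarrow F$; so we may assume $u$ identifies $G$ with a nonzero subobject of $F$ lying in $\aA'$, and we must show $F/G\in\langle E,\aA'\rangle_{\ex}$. Induct on $\ell$. For $\ell=1$ the only possibility is $F\in\aA'$ (if $F\cong E$ then $G=0$ by the elementary fact above), and then $F/G\in\aA'$ by quotient-closure. For $\ell\geq 2$ write $0\to F'\to F\xrightarrow{\pi}H\to 0$ as before. If $H\cong E$ then $\pi|_G=0$ since $\Hom(G,E)=0$, so $G\subseteq F'$, and $F/G$ is an extension of $E$ by $F'/G$, with $F'/G\in\langle E,\aA'\rangle_{\ex}$ by induction. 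If $H\in\aA'$, put $G''=G\cap F'\in\aA'$ and $\bar G=\im(G\to H)\in\aA'$; then there is an exact sequence $0\to F'/G''\to F/G\to H/\bar G\to 0$ in which $H/\bar G\in\aA'$ and $F'/G''\in\langle E,\aA'\rangle_{\ex}$ by induction applied to $G''\hookrightarrow F'$, so $F/G\in\langle E,\aA'\rangle_{\ex}$ again by extension-closedness. The degenerate cases $G=0$ or $G''=0$ are immediate.

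The only step that is not pure formalism is the splitting argument in (i): using $\End(E)=\mathbb{C}$ to promote a nonzero map from $E$ onto the top quotient $E$ to a direct-sum decomposition of $F$. The rest is the routine abelian-category bookkeeping of the exact sequences relating $\Cok(u)$, respectively $F/G$, to the corresponding objects over $F'$; I expect the mild care needed is simply in handling the inductive step when the top subquotient is $E$, but the vanishing hypotheses make every map into or out of $E$ controllable, so no real obstacle arises.
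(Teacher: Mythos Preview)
Your proof is correct and follows essentially the same strategy as the paper's. Both arguments exploit the filtration of $F$ by copies of $E$ and objects of $\aA'$, use $\End(E)=\mathbb{C}$ to obtain a splitting when the map $E\to F$ hits a copy of $E$, and for part (ii) run an induction on filtration length together with the snake-lemma decomposition of $F/G$. The only cosmetic difference is in (i): where you induct by peeling off one top subquotient at a time, the paper locates in a single step a subobject $F_1\subset F$ admitting a surjection $F_1\twoheadrightarrow E$ through which $u$ maps nontrivially (effectively jumping straight to the first filtration level where $u$ lands), and then uses the splitting there; this is your induction collapsed into one move.
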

\begin{proof}
(i) By the condition (\ref{cond}), 
there is an exact sequence in $\aA$, 
\begin{align}\label{exse}
0 \lr F_1 \stackrel{u_1}{\lr} F \stackrel{u_2}{\lr} F_2 \lr 0, 
\end{align}
with $F_i \in \langle E, \aA' \rangle_{\ex}$ 
which satisfy the following. 
\begin{itemize}
\item The composition $u_2 \circ u=0$. Hence 
$u$ factorizes as $E \stackrel{u'}{\to}F_1 \stackrel{u_1}{\to}F$. 
\item There is a surjection $F_1 
\stackrel{u_3}{\twoheadrightarrow} E$ in $\aA$ 
such that $\Ker (u_3) \in \langle E, \aA'\rangle_{\ex}$ and 
the composition $u_3 \circ u'$ is non-zero. 
\end{itemize}
Since $\End(E)=\mathbb{C}$, the map $u'$ is split 
injective, hence $u$ is also injective. Also we have 
$\Cok(u')\cong \Ker(u_3) \in
\langle E, \aA'\rangle_{\ex}$, hence the exact sequence in $\aA$, 
$$0 \lr \Cok(u') \lr \Cok(u) \lr F_2 \lr 0, $$
shows $\Cok(u) \in \langle E, \aA'\rangle_{\ex}$. 

(ii) We may assume 
that $u$ is injective since $\aA'$ is closed under quotients
in $\aA$. 
Since $F\in \langle E, \aA'\rangle_{\ex}$, 
there is a filtration in $\aA$
$$A_0 \subset A_1 \subset \cdots A_{l-1}\subset A_l =F, $$
such that each subquotient
$A_i/A_{i-1}$ is either isomorphic to $E$ or contained in $\aA'$. 
We call the smallest such $l$ the length of $F$. We show the
claim by the induction on $l$. If $l=1$, then $F\in \aA'$
by the condition (\ref{cond}). 
Since $\aA' \subset \aA$ is closed under quotients, 
we have $\Cok(u) \in \aA'$. 
Assume that $l>1$. Then there is an exact sequence (\ref{exse})
such that the length of $F_i$ are strictly
 smaller than $l$. Let
$G_2$ be the image of the composition in $\aA$,  
$$G\stackrel{u}{\lr}F \stackrel{u_2}{\lr} F_2,$$
and $G_1$ the kernel of $G\twoheadrightarrow G_2$. 
We obtain the morphism of exact sequences in $\aA$, 
$$\xymatrix{
0 \ar[r] & G_1 \ar[r] \ar[d] & G \ar[r] \ar[d]^{u} &
G_2 \ar[r]\ar[d] & 0 \\
0 \ar[r] & F_1 \ar[r] & F \ar[r] & F_2 \ar[r] & 0.
}$$
Note that $G_i \in \aA'$ and each vertical arrows are injective 
in $\aA$. Hence we have the exact sequence in $\aA$, 
$$0 \lr F_1/G_1 \lr \Cok(u) \lr F_2/G_2 \lr 0.$$
By the induction hypothesis, 
we have $F_i/G_i \in \langle E, \aA'\rangle_{\ex}$. 
Therefore we have $\Cok(u) \in \langle E, \aA'\rangle_{\ex}$. 
\end{proof}
\textit{Proof of Proposition~\ref{t-str}}:

We show that $\aA_E \cneq \dD_E \cap \aA$
is the heart of a bounded t-structure on $\dD_E$, and 
written as $\aA_E=\langle E, \aA'\rangle_{\ex}$. To show this, 
it is enough to show that for any $F\in \dD_E$, 
we have 
\begin{align}\label{property:tst}
\hH_{\aA}^{i}(F)\in \langle E, \aA'\rangle_{\ex}, \quad i\in \mathbb{Z}.
\end{align}
Here we denote by 
$$\hH_{\aA}^{i}\colon \dD \ni F \longmapsto \hH_{\aA}^i(F) \in \aA,$$
the $i$-th cohomology functor with respect to the t-structure
with heart $\aA$. 
Noting that $\aA'$ is the heart of a bounded t-structure 
on $\dD'$, the triangulated category 
$\dD_E$ is also written as $\langle E, \aA'\rangle_{\tr}$. 
Hence any object $F\in \dD_E$ is written as a successive 
extensions by objects $E[i']$ and $G[i'']$ for 
$G\in \aA'$ and $i', i''\in \mathbb{Z}$. 
As in the proof of Lemma~\ref{cok} (ii), 
we show (\ref{property:tst}) by 
the length of such an extension. 
Suppose that $F\in \dD_E$ satisfies (\ref{property:tst}), 
and take a distinguished triangle 
$$G \lr F \lr H \lr G[1], $$
with $G=E$ or $G\in \aA'$. 
By the induction argument, it is enough 
to show that $H$ satisfies (\ref{property:tst}).
Taking the long exact sequence associated to 
$\hH_{\aA}^{\bullet}(\ast)$, we
have $\hH_{\aA}^{i}(F) \cong \hH_{\aA}^{i}(H)$ 
for $i\neq -1, 0$ and the 
exact sequence in $\aA$, 
$$0 \to \hH_{\aA}^{-1}(F) \to \hH_{\aA}^{-1}(H) \to 
G \stackrel{u}{\to} \hH_{\aA}^{0}(F) \to \hH_{\aA}^{0}(H)\to 0.$$
If $u=0$, then obviously $H$ satisfies (\ref{property:tst}). 
Otherwise we have 
$$\Ker(u) \in \aA', \quad \imm (u) \in \aA', \quad 
\Cok(u) \in 
\langle E, \aA'\rangle_{\ex},$$
 by 
Lemma~\ref{cok}. Therefore  
$H$ satisfies (\ref{property:tst}) also in this case. 
\end{proof}

\subsection{Proof of Lemma~\ref{const:stab2}}
We first show the following lemma. 
\begin{lem}\label{Anoet}
The abelian category $\aA_X$ is noetherian. 
\end{lem}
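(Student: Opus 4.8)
The plan is to show that every ascending chain $E_1\subseteq E_2\subseteq\cdots$ of subobjects of a fixed $E\in\aA_X$ stabilizes. The two invariants I would exploit are the additive maps $\rk(\cdot)=\ch_0\colon\aA_X\to\mathbb{Z}$ and $\beta(\cdot)=-\ch_2(\cdot)\colon\aA_X\to N_1(X)$. From $\aA_X=\langle\oO_X,\Coh_{\le 1}(X)[-1]\rangle_{\ex}$ every object is a finite iterated extension of copies of $\oO_X$ and of objects $G[-1]$ with $G\in\Coh_{\le 1}(X)$; since $\rk(\oO_X)=1$, $\rk(G[-1])=0$ and $\beta(G[-1])=[\Supp G]\in\NE(X)$, it follows that $\rk(F)\ge 0$ and $\beta(F)\in\NE(X)$ for every $F\in\aA_X$, and that $\rk(F)=0$ if and only if $F\in\Coh_{\le 1}(X)[-1]$. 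Applying this to $E_j/E_i\in\aA_X$ shows $\rk(E_i)$ and $\beta(E_i)$ are non-decreasing, with $\rk(E_i)\le\rk(E)$ and $\beta(E)-\beta(E_i)\in\NE(X)$. Because $X$ is projective, $\overline{\NE}(X)$ is strongly convex, so $\{v\in\NE(X):\beta(E)-v\in\NE(X)\}$ is finite; hence $\rk(E_i)$ and $\beta(E_i)$ are eventually constant. Passing to a tail, I may assume $\rk(E_i)=\rho$ and $\beta(E_i)=\beta_0$ for all $i$, so that each $E_j/E_i$ $(i\le j)$ has rank $0$ and $\beta=0$, i.e. $E_j/E_i=Q_{ij}[-1]$ for a $0$-dimensional sheaf $Q_{ij}$.

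Next I would record that for $F\in\aA_X$ the standard cohomology sheaves satisfy $\hH^i(F)=0$ for $i\notin\{0,1\}$, $\hH^1(F)\in\Coh_{\le 1}(X)$, and $\hH^0(F)$ is torsion-free of rank $\rk(F)$: indeed, since objects of $\aA_X$ have standard cohomology in degrees $0,1$, applying $\hH^0$ to the filtration of $F$ yields a filtration of $\hH^0(F)$ whose subquotients embed into $\hH^0(\oO_X)=\oO_X$ or into $\hH^0(G[-1])=0$, so $\hH^0(F)$ is an iterated extension of ideal sheaves $I_C\subseteq\oO_X$, hence torsion-free. Now set $M=E/E_1\in\aA_X$ and $\mathcal{F}=\hH^0(M)$. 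For each $j$ the exact sequence $0\to Q_{1j}[-1]\to M\to E/E_j\to 0$ gives, via the long exact $\hH^\bullet$-sequence, an inclusion $\mathcal{F}\hookrightarrow\hH^0(E/E_j)$ with $0$-dimensional cokernel $\hH^0(E/E_j)/\mathcal{F}\hookrightarrow Q_{1j}$, together with $Q_{1j}/(\hH^0(E/E_j)/\mathcal{F})\hookrightarrow\hH^1(M)$.

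The key point is then to bound $Q_{1j}$ uniformly. Here $\hH^0(E/E_j)$ and $\mathcal{F}$ are torsion-free of the same rank differing by a $0$-dimensional (codimension $3$) sheaf, so the local $\Ext$-computation $\HOM(\cdot,\oO_X)$ shows they have the same reflexive hull; thus $\mathcal{F}\subseteq\hH^0(E/E_j)\subseteq\mathcal{F}^{\vee\vee}$, and $\hH^0(E/E_j)/\mathcal{F}$ is a $0$-dimensional subsheaf of the fixed coherent sheaf $\mathcal{F}^{\vee\vee}/\mathcal{F}$, while $Q_{1j}/(\hH^0(E/E_j)/\mathcal{F})$ is a $0$-dimensional subsheaf of the fixed sheaf $\hH^1(M)$. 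Hence $\length(Q_{1j})$ is bounded by a constant independent of $j$. Finally, the $Q_{1j}[-1]=E_j/E_1$ form an ascending chain of subobjects of $M$ lying in $\Coh_{\le 1}(X)[-1]$, which is closed under subobjects in $\aA_X$ by Remark~\ref{Ox}; this translates into an ascending chain $Q_{12}\subseteq Q_{13}\subseteq\cdots$ of $0$-dimensional sheaves of uniformly bounded length, which stabilizes since the length is a non-decreasing bounded integer. If $Q_{1j}=Q_{1j_0}$ as subobjects of $M$ for $j\ge j_0$, then $E_j/E_{j_0}=0$, so $E_j=E_{j_0}$, and the chain stabilizes.

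I expect the crux to be the uniform bound on the $0$-dimensional quotients $Q_{1j}$: monotonicity of $\rk$ and $\beta$ only reduces matters to this situation, and there is genuinely no a priori bound on $\ch_3$ of objects of $\aA_X$ with fixed $(\rk,\beta)$ — one can attach arbitrarily long $0$-dimensional summands — so the bound must come from the $E_j$ all sitting inside the fixed object $E$, which is exactly what the reflexive-hull (Hartogs) argument extracts. A secondary technical point, needed for that argument, is the torsion-freeness of $\hH^0$ of objects of $\aA_X$, which is why the explicit filtration by $\oO_X$'s and $\Coh_{\le 1}(X)[-1]$'s has to be used rather than just the containment $\aA_X\subset\Coh^{\dag}(X)[-1]$.
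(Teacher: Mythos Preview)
Your proof is correct and follows essentially the same strategy as the paper's: stabilize $\rk$ and the curve class so that successive steps become $Q[-1]$ with $Q$ zero-dimensional, then use the reflexive hull $(-)^{\vee\vee}$ of the torsion-free sheaf $\hH^0$ to bound the zero-dimensional excess. The only organizational difference is that the paper, working with the dual chain of surjections $E_0\twoheadrightarrow E_1\twoheadrightarrow\cdots$, first stabilizes $\hH^1(E_j)$ (an ascending-chain argument inside the fixed sheaf $\hH^1(E_0)$) and then obtains a genuine nested chain $\hH^0(E_0)\subset\hH^0(E_1)\subset\cdots\subset\hH^0(E_0)^{\vee\vee}$, invoking noetherianity of $\Coh(X)$ directly; you instead bound $\length(Q_{1j})$ by the lengths of the maximal zero-dimensional subsheaves of the two fixed sheaves $\mathcal{F}^{\vee\vee}/\mathcal{F}$ and $\hH^1(M)$. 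One small quibble: strong convexity of $\overline{\NE}(X)$ alone does not imply finiteness of $\{v\in\NE(X):\beta(E)-v\in\NE(X)\}$; you need the ample class to bound $\omega\cdot v$ and then the boundedness of effective curve classes of bounded degree (the paper sidesteps this by only stabilizing the integer $\ch_2\cdot\omega$, which already suffices to force the kernels to be zero-dimensional).
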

\begin{proof}
We take a chain of surjections in $\aA_X$, 
\begin{align}\label{surj}
E_0 \twoheadrightarrow E_1 \twoheadrightarrow \cdots E_j 
\twoheadrightarrow E_{j+1} \twoheadrightarrow \cdots.
\end{align}
By Lemma~\ref{percoh}, we have 
$\rk(E)\ge 0$ and $\ch_2(E)\cdot \omega \ge 0$ 
for a fixed ample divisor $\omega$ on $X$. 
Hence we may assume that $\rk(E_i)$ and 
$\ch_2(E_i)\cdot \omega$ are constant for all $i$. 
We have exact sequences, 
\begin{align}\label{Q}
0 \lr Q_j[-1] \lr E_j \lr E_{j+1} \lr 0, 
\end{align}
where $Q_j$ are 0-dimensional sheaves. 
The long exact sequence associated to the standard 
t-structure on $D^b(\Coh(X))$ shows that the 
induced morphisms 
$\hH^1(E_{j})\to \hH^1(E_{j+1})$ 
are surjections of sheaves, hence 
we may assume that $\hH^1(E_{j})\cong \hH^1(E_{j+1})$. 
Then the exact sequence (\ref{Q}) induces the sequence, 
\begin{align}\label{dual}
\hH^0(E_0) \subset \hH^0(E_{1}) \subset \cdots \subset 
\hH^0(E_{j}) \subset \cdots \subset 
\hH^0(E_0)^{\vee \vee}.
\end{align}
Since $\Coh(X)$ is noetherian,
the above sequence terminates. 
\end{proof}
\textit{Proof of Lemma~\ref{const:stab2}}:
\begin{ssstep}
The pair $\sigma_{\xi}=(Z_{\xi}, \aA_X)$
satisfies the Harder-Narasimhan property. 
\end{ssstep}
\begin{proof}
Let $\fF$ be the full subcategory of 
$\Coh_{\le 1}(X)[-1]$, 
\begin{align}\label{pure}
\fF=\{F[-1] : F\mbox{ is a pure 1-dimensional sheaf. }\}.
\end{align}
If we set $\tT=\{ T\in \aA_X : \Hom(T, \fF)=0\}$, 
then the pair $(\tT, \fF)$ is a torsion pair on $\aA_X$. 
In fact for any $E\in \aA_X$, 
we have the exact sequence in $\aA_X$, 
$$0 \lr Q[-1] \lr \hH^1(E)[-1] \lr F[-1] \lr 0, $$
where $Q$ is a 0-dimensional sheaf
 and $F[-1] \in \fF$. 
Let $T$ be the kernel of the 
surjection in $\aA_X$, $E\to \hH^1(E)[-1] \to F[-1]$. 
Since $\Hom(\hH^0(E), \fF)=0$, 
we have $\Hom(T, \fF)=0$, i.e. $T\in \tT$. 
The exact sequence $0 \to T \to E \to F[-1]\to 0$
gives the desired decomposition (\ref{fits}). 

It is easy to see that any $F[-1] \in \fF$ is 
$Z_{\xi}$-semistable with 
$\arg Z_{\xi}(T)>\arg Z_{\xi}(F[-1])$
for any non-zero $T\in \tT$. 
Also applying the same argument of~\cite[Lemma~2.27]{Tolim}, 
an object $E\in \tT$ is $Z_{\xi}$-semistable if and 
only if for any exact sequence 
$$0\lr A \lr E \lr B \lr 0, $$
with $A, B \in \tT$, we have $\arg Z_{\xi}(A) \le \arg Z_{\xi}(B)$. 
By Lemma~\ref{Anoet} and 
the proof of Proposition~\ref{suHN}, 
it is enough to show that 
there is no infinite sequence of subobjects 
in $\tT$, 
\begin{align}\label{locfin}
 \cdots \subset E_{j+1} \subset E_j \subset \cdots 
 \subset E_2 \subset E_1.
 \end{align}
 (cf.~the proof of~\cite[Theorem~2.29]{Tolim}.)
 Suppose that such a sequence exists. 
 We may assume that $\rk(E_i)$ and $\ch_2(E_i)\cdot \omega$
 are constant for all $i$, 
 hence $E_{0}/E_{j+1}=Q_j[-1]$ where $Q_j$ is a 
 0-dimensional sheaf. 
 Taking the long exact sequence of cohomology, we obtain the 
 sequence of surjections of sheaves, 
 \begin{align}\label{Q_j}
  \cdots \twoheadrightarrow
   Q_{j+1} \twoheadrightarrow Q_j \twoheadrightarrow \cdots 
  \twoheadrightarrow 
  Q_{1} \twoheadrightarrow Q_{0}.
  \end{align}
  Since $\hH^1(E_0)$ is 0-dimensional and 
  we have the surjections $\hH^1(E_0) \twoheadrightarrow Q_j$
for all $j$, the length of 
 $Q_j$ is bounded above. 
 This implies (\ref{Q_j}) terminates, hence (\ref{locfin}) also terminates.
 \end{proof}
 \begin{ssstep}
 Let $\{\pP_{\xi}(\phi)\}_{\phi \in \mathbb{R}}$
  be the slicing corresponding to the pair 
 $\sigma_{\xi}=(Z_{\xi}, \aA_X)$
 via Proposition~\ref{prop:corr}. 
 Then $\{\pP_{\xi}(\phi)\}_{\phi \in \mathbb{R}}$ 
 is of locally finite. 
 \end{ssstep}
 \begin{proof}
 Since $\aA_X$ is noetherian, it is enough 
 to show that there is $\eta>0$ such that 
 $\pP_{\xi}((\phi-\eta, \phi+\eta))$ is 
 artinian for any $\phi \in \mathbb{R}$
 with respect to strict monomorphisms. 
 Let $\phi_i=\frac{1}{\pi}\arg z_i \in (1/2, 1)$. 
 By the construction of $Z_{\xi}$, 
 it is enough to show that 
 $\pP_{\xi}(1/2)$ and 
 $\langle \pP_{\xi}(\phi_0), \pP_{\xi}(\phi_1)\rangle_{\ex}$
 are artinian.
 It is easy to see that $\pP_{\xi}(1/2)$ 
 coincides with $\fF$, where $\fF$ is given by (\ref{pure}). 
 Suppose that there is an infinite sequence of strict monomorphisms in 
 $\pP_{\xi}(1/2)$, 
 \begin{align}\label{locfin2}
 \cdots \subset E_{j+1} \subset E_j \subset \cdots 
 \subset E_2 \subset E_1.
 \end{align}
 Since each $E_j$ is a 1-dimensional sheaf, 
 we have
 $\ch_2(E_{j+1})\cdot \omega \le \ch_2(E_j)\cdot \omega$ for 
 an ample divisor $\omega$, and 
 $\ch_2(E_j)\cdot \omega=0$ if and only if $E_j=0$. 
 Therefore (\ref{locfin2}) terminates. 
 The artinian condition of 
  $\langle \pP_{\xi}(\phi_0), \pP_{\xi}(\phi_1)\rangle_{\ex}$
  follows from the same argument to show the 
  termination of (\ref{locfin}).  
     \end{proof}
 \begin{ssstep}
 The pair $\sigma_{\xi}=(Z_{\xi}, \pP_{\xi})$
 satisfies the support property (\ref{support}). 
  \end{ssstep}
 \begin{proof}
 Let $E\in \aA_X$ be a non-zero object 
 with $\cl(E)=(-n, -\beta, r)$. 
 We introduce an usual Euclid 
 norm on $\mathbb{H}_{0}\otimes \mathbb{R}=\mathbb{R}$
 and $\mathbb{H}_2 \otimes \mathbb{R}=\mathbb{R}$. 
 We have 
 \begin{align*}
 \frac{\lVert E \rVert}{\lvert Z(E) \rvert}
 =\left\{ \begin{array}{cc}
  \lvert z_1 \rvert,  & r>0, \\
  \frac{\lVert \beta \rVert}{\omega \cdot \beta}, & r=0, \ \beta\in \NE(X), \\
  \lvert z_0 \rvert, & r=\beta=0, \ n>0.
 \end{array}
 \right. 
 \end{align*}
 Since $\beta$ is effective or zero, the 
 above description immediately implies 
 the support property.  
 \end{proof}

\subsection{Proof of Lemma~\ref{check}}
\begin{proof}
The first, second, third and the last 
conditions are obviously satisfied. We check 
other three conditions. 
Recall the heart of a bounded t-structure 
$\aA_X \subset \dD_X$ given in (\ref{tst1}).  
\begin{sstep}\label{sstep1}
For $v\in \Gamma$ with $\rk(v)=1$ or $v\in \Gamma_{0}$, 
the stack of objects 
$$\oO bj^{v}(\aA_X)\subset \mM_{0}, $$
is an open substack of $\mM_0$. 
\end{sstep}
\begin{proof}
If $\rk(v)=0$, then $\oO bj^{v}(\aA)$ is the stack 
of coherent sheaves $E\in \Coh_{\le 1}(X)$
of numerical type $v$, 
and the result is well-known. Suppose that $\rk(v)=1$
and 
let $\eE \in D^b(X\times S)$ be an $S$-valued 
point of $\mM_0$. We assume that $S$ is connected and 
there is $s\in S$ such that  
$\eE_s \cneq \dL i_{s}^{\ast}\eE \in \aA_X$
with $\cl(\eE_s)=v$, where $i_s \colon X\times \{s\} 
\hookrightarrow X\times S$ is the inclusion. 
It is enough to show that the locus 
\begin{align}\notag
S^{\circ}\cneq \{ s'\in S : \eE_{s'}\in \aA_X\}, 
\end{align}
is open in $S$. 
Note that the stack of objects in $E\in \Coh^{\dag}(X)[-1]$,
(cf.~(\ref{dag}),)
with $\det (E)=\oO_X$ 
is open in $\mM_{0}$,
(cf.~\cite[Lemma~3.14]{Tolim},)
hence we may assume that $\eE_{s'}\in \Coh^{\dag}(X)[-1]$
for all $s'\in S$. 
As in the proof of Lemma~\ref{firstshow} (i), any 
object $E\in \aA_X$ with $\rk(E)=1$ is given 
by an extension, 
$$I_C \lr E \lr F[-1], $$
where $I_C$ is the ideal sheaf of $C\subset X$
with $\dim C\le 1$ and $F\in \Coh_{\le 1}(X)$. 
Therefore an object $E\in \Coh^{\dag}(X)[-1]$ with 
$\det(E)=\oO_X$ and $\rk(E)=1$ is contained in $\aA_X$
if and only if $\hH^{0}(E)$ is torsion free. 
First we show the case that $S$ is a smooth curve. 
We have the spectral sequence, 
$$E_{2}^{p, q}=\tT or_{-p}^{\oO_{X\times S}}(\hH^{q}(\eE), 
\oO_{X\times \{s\}}) \Rightarrow \hH^{p+q}(\eE_{s}).$$
Since $E_{2}^{p, q}=0$ for $p\le -2$ or $p\ge 1$, the above 
spectral sequence degenerates at $E_2$-terms. 
Therefore $E_{2}^{-1, 0}=0$, and this implies that
$\hH^{0}(\eE)$ is flat over $S$, and 
we have the exact sequence, 
$$0 \lr \hH^{0}(\eE)_{s} \lr \hH^{0}(\eE_s) \lr 
\tT or_{1}^{\oO_{X\times S}}(\hH^{1}(\eE), \oO_{X\times \{s\}})
\lr 0.$$
Since $\hH^{0}(\eE_s)$ is torsion free by 
$\eE_s \in \aA_X$, 
the sheaf $\hH^{0}(\eE)_{s}$ is also torsion free
by the above exact sequence.  
Since $\hH^{0}(\eE)$ is flat over $S$, there is 
an open neighborhood $s\in U$ such that 
$\hH^{0}(\eE)_{s'}$ is torsion free for $s'\in U$. 
By the generic flatness, we have 
$\hH^{0}(\eE_{s'})\cong \hH^{0}(\eE)_{s'}$ for 
$s\neq s' \in U$ by shrinking $U$ if necessary. 
Therefore $U\subset S^{\circ}$ and $S^{\circ}$ is open in $S$. 

In general, we can show the openness of $S^{\circ}$ as follows. 
Let $V\subset S$ be an open subset on which 
$\hH^j(\eE)$ is flat for all $j$. Then we have 
$\hH^j(\eE)=0$ unless $j=0, 1$,
$\hH^{0}(\eE_{s'})=\hH^{0}(\eE)_{s'}$
for any $s'\in V$, and $S^{\circ}\cap V$ is open in $V$. 
Also by the result for smooth curves, we know that 
$S^{\circ}$ is dense in $S$ in Zariski topology, 
hence $S^{\circ}\cap V$ is non-empty. We apply the 
same argument for the object $\dL i^{\ast}\eE$, 
where $i$ is the inclusion, 
$$i\colon S\setminus (S^{\circ}\cap V)
\hookrightarrow S.$$
By the noetherian induction, we conclude that
$S^{\circ}$ is open in $S$.

\end{proof}
\begin{sstep}\label{sstep2}
Take $\sigma_{\xi}=(Z_{\xi}, \aA_X) \in \vV_X$ and
$v\in \Gamma$ with $\rk(v)=1$ or $v\in \Gamma_{0}$. 
Then the substack 
$$\mM^v(\sigma_{\xi}) \subset \oO bj^{v}(\aA_X), $$
is an open substack and it is of
 finite type over $\mathbb{C}$. 
\end{sstep}
\begin{proof}
If $v\in \Gamma_0$, 
then 
$\mM^{v}(\sigma_{\xi})$ is the moduli stack 
of 0-dimensional sheaves, 
and the result is well-known. 
Suppose that $\rk(v)=1$. By 
Step~\ref{sstep1} and
the argument
of~\cite[Theorem~3.20]{Tst3},
it is enough to show the boundedness of 
$\sigma_{\xi}$-semistable objects of numerical type $v$.
For a $\sigma_{\xi}$-semistable object $E \in \aA_X$, 
consider the exact sequence (\ref{useseq}). 
For an effective class $\beta \in N_1(X)$, we set 
$m(\beta)$ as
\begin{align}\label{mbeta}
m(\beta)=\inf\{ \ch_3(\oO_C) : \dim C=1 \mbox{ with }
[C]=\beta\}.
\end{align}
It is well-known that $m(\beta)>-\infty$,
(cf.~\cite[Lemma~3.10]{Tolim},)
hence
 the length of $Q$ in (\ref{useseq})
 is bounded above.
Since the set of ideal sheaves with a fixed numerical 
class is bounded, the object $E$ is contained in a 
bounded family. 
\end{proof}
\begin{sstep}\label{sstep3}
There are subsets $0\in T\subset S \subset N_{\le 1}(X)$
which satisfy Assumption~\ref{assum2}. 
\end{sstep}
\begin{proof}
We set $S$ and $T$ to be 
 \begin{align}\label{good}
 S&\cneq \{ (n, \beta) \in N_{\le 1}(X) : 
 \beta \ge 0, \ n\ge m(\beta)\}, \\
 \label{verygood}
 T& \cneq \{(n, \beta) \in N_{\le 1}(X) :
 \beta\ge 0, \ n\ge 0\}.
 \end{align}
Here $\beta \ge 0$ means $\beta$ is effective or 
zero, $m(\beta)$ is given in (\ref{mbeta})
when $\beta$ is effective, and $m(0)=0$. 
We show that $T$, $S$ satisfy Assumption~\ref{assum2}. 
The first condition is obvious.
The second condition follows easily that 
any effective class in $N_1(X)$ can be written as 
finitely many ways as a sum of effective classes. 
The third one follows from the existence of the 
exact sequence (\ref{useseq}). 
As for the last one, let 
$\Lambda$ be the set of pairs $(k, \beta')$ of 
$k\in \mathbb{Z}$ and an effective class $\beta' \in N_1(X)$. 
For $\lambda=(k, \beta')$, we set 
$$S_{\lambda}=\{ (n, \beta) \in S :
n\ge k \mbox{ if }\beta\le \beta'\}.
$$
Here $\beta\le \beta'$ means $\beta'-\beta$ is effective or zero. 
Then $\{S_{\lambda}\}_{\lambda \in \Lambda}$ 
gives the desired family. 
\end{proof}
Assumption~\ref{assum} has been checked by 
Step~\ref{sstep1}, Step~\ref{sstep2} and Step~\ref{sstep3}. 
\end{proof}

\begin{rmk}
If $v\notin \Gamma_0$, then 
$\mM^{v}(\sigma)$ is not necessary of finite type. 
For instance consider the class $v=(0, 2[C], 0)$ 
for a curve $C\subset X$. Then 
$\mM^v(\sigma)$ contains objects 
$\oO_C(D)\oplus \oO_C(-D)$ for arbitrary divisors 
$D\subset C$. Thus $\mM^v(\sigma)$ is not of finite type. 
\end{rmk}

\section{Some results on weak stability conditions}
\label{sec:some}
\subsection{Outline of the proof of Theorem~\ref{thm:stab}.}
\begin{proof}
We first note that 
if two elements of $\Stab_{\Gamma_{\bullet}}(\dD)$, 
$\sigma=(Z, \pP)$ and 
$\tau=(W, \qQ)$ satisfy 
$d(\pP, \qQ)<1$, then $\sigma=\tau$. 
(See~\cite[Lemma~6.4]{Brs1} for the proof.)
In particular the map $\Pi$ is locally injective, hence
it is enough to show 
that $\Pi$ is locally surjective. 
For $\sigma=(\{Z_i\}_{i=0}^{N}, \pP)$, let us 
take a $\sigma$-semistable object $E\in \dD$ 
with $\cl(E)\in \Gamma_{m}\setminus \Gamma_{m-1}$. 
For $\{W_i\}_{i=0}^{N} \in \prod_{i=0}^{N}\mathbb{H}_i^{\vee}$, 
the support property (\ref{support}) implies,
\begin{align}\label{eva}
\left\lvert 1-\frac{W_m([E])}{Z_m([E])} \right\rvert \le C\cdot 
(W_m-Z_m)\left(\frac{[E]}{\lVert [E] \rVert}\right),
\end{align}
for a constant $C>0$. 
For any $0<\varepsilon\ll 1$, we can find an open neighborhood 
$\{Z_i\}_{i=0}^{N}\in U_{\varepsilon} \subset 
\prod_{i=0}^{N}\mathbb{H}_i^{\vee}$ such that 
the RHS of (\ref{eva}) is less than $\sin \pi \varepsilon$
for any $\{W_i\}_{i=0}^{N}\in U_{\varepsilon}$. 
In particular $W_m([E]) \neq 0$ for such 
$\{W_i\}_{i=0}^{N}$, and we have 
\begin{align}\label{arg}
\lvert \arg W_m([E])-\arg Z_m([E]) \rvert <\pi \varepsilon.
\end{align}
The above condition (\ref{arg}) is enough to
apply the same proof of~\cite[Theorem~7.1]{Brs1}
to show
 the existence of $\qQ \in \Slice(\dD)$ 
satisfying $d(\pP, \qQ)<\varepsilon$ and 
(\ref{phase}) holds for 
the pair $\tau=(\{W_i\}_{i=0}^{N}, \qQ)$.
Here we just describe how to construct $\qQ$, and 
leave the detail to the reader to check that
the proof of~\cite[Theorem~7.1]{Brs1} works
in our situation. 
For $\phi \in \mathbb{R}$
and $a, b \in \mathbb{R}$, 
a quasi-abelian category 
$\pP((a, b))$ 
is called thin and envelopes $\phi$ if 
$b-a<1-2\varepsilon$ and $a+\varepsilon\le \phi \le b-\varepsilon$. 
Then $W=\{W_i\}_{i=0}^{N}$ determines a map, 
$$W\colon \pP((a, b)) \ni E \longmapsto \arg W(E) \in (\pi(a-\varepsilon), 
\pi(b+\varepsilon)).$$ 
 The subcategory $\qQ(\phi)\subset \dD$ is defined by 
$W$-semistable objects $E\in \pP((a, b))$
with phase $\phi$, i.e. 
$E\in \qQ(\phi)$ if and only if 
for any exact sequence in $\pP((a, b))$, 
$$0 \lr F \lr E \lr G \lr 0, $$
we have 
$\arg W(F) \le \arg W(G)$. 
 The same proof of~\cite[Theorem~7.1]{Brs1} shows 
that $\qQ(\phi)$ does not depend on $a$, $b$, 
and determines a desired slicing on $\dD$. 
It is enough to check that $\tau=(\{W_i\}_{i=0}^{N}, \qQ)$
satisfies the support property. 
Let $F\in \dD$ be a $\tau$-semistable object
with $\cl(F)\in \Gamma_{m}\setminus \Gamma_{m-1}$. 
Since (\ref{eva}) is less than $\sin \pi \varepsilon$ and 
$d(\pP, \qQ)<\varepsilon$, 
 it is easy to see 
 $$\lVert [F] \rVert_{m} 
 \le \frac{1-\sin \pi \varepsilon}{\cos 2\pi \varepsilon}
 C \lvert W_m([F]) \rvert.$$
Hence $\tau$ satisfies the support property (\ref{support}),
 and $\Pi$
is surjective on $U_{\varepsilon}$. 
\end{proof}
\subsection{Proof of Lemma~\ref{lem:cont}}
This follows from a 
following stronger lemma below, by 
setting $\fF=0$ there. 
We will need this stronger version in the 
next paper~\cite{Tcurve2}.
\begin{lem}\label{lem:scont}
Let $\aA$ be the heart of a bounded t-structure on 
$\dD$, and $(\tT, \fF)$ a torsion pair on $\aA$. 
Let $\bB=\langle \fF[1], \tT \rangle_{\ex}$ the 
associated tilting. Let
$$[0, 1) \ni t \longmapsto Z_t \in 
\prod_{i=0}^{N}\mathbb{H}_i^{\vee}, $$
be a continuous map such that
$\sigma_t=(Z_t, \aA)$ for $0<t<1$ and 
$\sigma_0=(Z_0, \bB)$ determine points in 
$\Stab_{\Gamma_{\bullet}}(\dD)$.
Then we have $\lim_{t\to 0}\sigma_t=\sigma_0$. 
\end{lem}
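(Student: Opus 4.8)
The plan is to reduce Lemma~\ref{lem:cont} to Lemma~\ref{lem:scont} exactly as the excerpt suggests: take $\fF=0$, so that $\bB=\langle\fF[1],\tT\rangle_{\ex}=\tT=\aA$ and the "tilted" heart coincides with the original one. Then a continuous family $\{Z_t\}_{t\in(0,1)}$ with $\sigma_t=(Z_t,\aA)\in\Stab_{\Gamma_\bullet}(\dD)$ is, after reparametrizing any subinterval $(a,b)\subset(0,1)$ to $[0,1)$ by an affine change of variable and renaming the endpoint, a family to which Lemma~\ref{lem:scont} applies at each interior point; hence $t\mapsto\sigma_t$ is continuous at every $t_0\in(0,1)$, which is what is asserted. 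So the real content is Lemma~\ref{lem:scont}, and I would prove that.

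First I would fix the heart. For $0<t<1$ the heart of $\sigma_t$ is $\aA$, while the heart of $\sigma_0$ is $\bB$; the first step is to observe that since $\bB$ is a tilt of $\aA$ at the torsion pair $(\tT,\fF)$, every object of $\bB$ has cohomology (with respect to the $\aA$-t-structure) concentrated in degrees $-1,0$, with $\hH^{-1}\in\fF$, $\hH^0\in\tT$. This gives a priori control on the phases: for any $0\neq E\in\bB$ one has a triangle $\hH^{-1}(E)[1]\to E\to\hH^0(E)$ with $\hH^{-1}(E)[1]\in\fF[1]\subset\bB$ and $\hH^0(E)\in\tT\subset\bB$, so the $\sigma_0$-phases $\phi^{\pm}_{\sigma_0}$ of $E$ lie in the interval determined by the phases of $\fF[1]$ and $\tT$. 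The key quantitative input is the support property for $\sigma_0$: there is $C>0$ with $\lVert F\rVert\le C\lvert Z_0(F)\rvert$ for all $\sigma_0$-semistable $F$. By continuity of $Z_t$ in $t$, for $t$ near $0$ we get a uniform estimate $\lvert 1-Z_t(F)/Z_0(F)\rvert<\sin(\pi\varepsilon)$ on the relevant (finitely many, by local finiteness) classes, exactly as in the estimate (\ref{eva}) used for Theorem~\ref{thm:stab}. This forces $\lvert\arg Z_t(F)-\arg Z_0(F)\rvert<\pi\varepsilon$ for $\sigma_0$-semistable $F$.

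Next I would bound the generalized metric $d(\sigma_t,\sigma_0)$. Using the phase-closeness just obtained, the Harder--Narasimhan filtration of any $0\neq E\in\dD$ with respect to $\sigma_0$ gives a filtration whose graded pieces have $Z_t$-phases within $\varepsilon$ of their $Z_0$-phases; one then checks that this filtration is arbitrarily close to being the $\sigma_t$-HN filtration, i.e. that $\phi^{\pm}_{\sigma_t}(E)$ is within, say, $2\varepsilon$ of $\phi^{\pm}_{\sigma_0}(E)$, uniformly in $E$. This is the same bookkeeping as in Bridgeland's proof of \cite[Theorem~7.1]{Brs1} and in the proof of Theorem~\ref{thm:stab} above: the cleanest route is to verify that for $t$ small the heart $\aA=\pP_{\sigma_t}((0,1])$ and the pieces of the $\sigma_0$-HN filtration interact as in a "thin envelope," so that refining the $\sigma_0$-HN filtration by the $\sigma_t$-semistable factors only moves phases by $O(\varepsilon)$. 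Combined with $Z_t\to Z_0$ in $\prod_i\mathbb{H}_i^\vee$, this yields $d(\sigma_t,\sigma_0)\to 0$ and $Z_t\to Z_0$, i.e. $\sigma_t\to\sigma_0$ in the topology induced by (\ref{inclu}).

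The main obstacle is the heart mismatch near $t=0$: for $t>0$ the heart is $\aA$, at $t=0$ it is the tilt $\bB$, so I cannot simply compare weak stability functions on a fixed abelian category and must argue at the level of slicings and the generalized metric, controlling how HN filtrations deform across the tilt. This is precisely where the support property for $\sigma_0$ and the local finiteness are essential — they guarantee the finitely-many-classes reduction and the uniform phase estimate — and it is the step that requires care rather than the (routine) reduction of Lemma~\ref{lem:cont} to Lemma~\ref{lem:scont} or the (routine) verification that the endpoint estimates assemble into a bound on $d$. Having established Lemma~\ref{lem:scont}, applying it at each $t_0\in(0,1)$ with $\fF=0$ finishes Lemma~\ref{lem:cont}.
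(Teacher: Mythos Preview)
Your outline has a real gap at the step ``one then checks that this filtration is arbitrarily close to being the $\sigma_t$-HN filtration.'' The estimate $\lvert\arg Z_t(F)-\arg Z_0(F)\rvert<\pi\varepsilon$ for $\sigma_0$-semistable $F$ only controls the $Z_t$-argument of each $\sigma_0$-HN factor of $E$; it says nothing about the $\sigma_t$-HN filtration of $E$, which is what $\phi^{\pm}_{\sigma_t}(E)$ measures. A $\sigma_0$-semistable factor could in principle have a $\sigma_t$-HN factor of much larger phase, and nothing in your argument rules this out. Bridgeland's thin-envelope machinery does not help here directly: that machinery \emph{constructs} a deformed slicing $\qQ_t$ from $\sigma_0$ and $Z_t$ and shows $d(\qQ_t,\pP_{\sigma_0})<\varepsilon$, but it does not show that the slicing you already have---the one coming from the fixed heart $\aA$---coincides with $\qQ_t$. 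That identification is exactly what is at stake, and your proposal is circular on this point.

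The paper's argument is structurally different and avoids this circularity. It first invokes Theorem~\ref{thm:stab} to produce the continuous family $\sigma_t'=(Z_t,\qQ_t)$ with $\sigma_0'=\sigma_0$, so continuity at $0$ is automatic for $\sigma_t'$; the task is then reduced to proving $\sigma_t'=\sigma_t$, i.e.\ $\qQ_t((0,1])=\aA$. Since both sides are hearts of bounded t-structures, the inclusion $\qQ_t((0,1])\subset\aA$ suffices. Here the tilt structure is used essentially: one shows $\fF[1]\subset\qQ_0(1)$ (because for $F\in\fF\subset\aA$ the condition $Z_t(F)\in\mathfrak{H}$ forces $\Imm Z_0(F[1])\le 0$ by continuity), hence $\qQ_0((0,1))\subset\tT\subset\aA$, and then a short case analysis via thin envelopes handles $\qQ_t(\phi)$ for $\phi\in(0,1]$. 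Your write-up mentions the tilt decomposition but never exploits it; that is precisely the missing ingredient.
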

\begin{proof}
By Theorem~\ref{thm:stab},
we have a continuous family of
points $\sigma_{t}'=(Z_t, \qQ_t)
\in \Stab_{\Gamma_{\bullet}}(\dD)$
 for $0\le t\ll 1$
 such that $\sigma_{0}'=\sigma_0$. 
It is enough to show that $\sigma_{t}'=\sigma_t$
for such $t$, i.e. 
$\qQ_{t}((0, 1])=\aA$. This 
follows from the inclusion 
\begin{align}\label{fol:inc}
\qQ_{t}((0, 1])\subset \aA,
\end{align}
since both are hearts of bounded t-structures on $\aA$. 
To check (\ref{fol:inc}), first note that any
object $E\in \fF[1]$ is contained in 
$\qQ_{0}(1)$, since otherwise $\Imm Z_t(E)>0$ 
for $0<t\ll 1$ contradicting that 
$Z_t$ is a weak stability function on $\aA$
for such $t$. 
Hence we have 
\begin{align}\label{Q_0}
\qQ_0((0, 1))\subset \tT \subset \aA.
\end{align}
Next 
take $0<\phi \le 1$ and  
a quasi-abelian category $\qQ_0((a, b))$ which 
is thin and envelopes $\phi$. 
(See the proof of Theorem~\ref{thm:stab}.)
As in the proof of Theorem~\ref{thm:stab}, objects of
$\qQ_t(\phi)$ consist of $Z_{t}$-semistable objects 
in $\qQ_0((a, b))$. 
If $a< 1$, then we have 
$\qQ_t(\phi) \subset \qQ_0((a, b)) \subset \aA$ 
by (\ref{Q_0}). 
Suppose $a\ge 1$, and take $E\in \qQ_t(\phi)$. 
Noting that $\fF[1]\subset \qQ_0(1)$ and (\ref{Q_0}), 
we have 
$$\hH^{-1}_{\aA}(E)[1] \in \qQ_{0}([1, a)), \quad
\hH^{0}_{\aA}(E)\in \qQ_{0}((b, 1)).$$
Therefore
the following sequence is an exact sequence in $\qQ_0((a, b))$, 
$$\hH^{-1}_{\aA}(E)[1] \lr E \lr \hH^{0}_{\aA}(E).$$
If $\hH^{-1}_{\aA}(E)\neq 0$, then 
 $\Imm Z_{t}(\hH^{-1}_{\aA}(E)[1])<0$ for $0<t\ll 1$
 which contradicts to $Z_t$-semistability of $E$. 
 Hence $\hH^{-1}_{\aA}(E)=0$, i.e. $E\in \aA$
 and (\ref{fol:inc}) holds. 
\end{proof}

\section{Appendix: involving Behrend's constructible functions}
The proof of Conjecture~\ref{conj:DTPT}
should follow if we are able to involve Behrend's
constructible functions
into our argument. Now there is a progress 
toward this direction by Joyce and Song~\cite{JS}, and 
we are able to establish 
the analogue statement of Theorem~\ref{prop:trans} for 
counting invariants of semistable sheaves
(not objects in the derived category) involving 
Behrend functions using their work. 
At the moment the author wrote the first version 
of this paper, still there was a technical gap in applying their 
theory into our context, that is
the derived category version of~\cite[Theorem~5.3]{JS}
 on the descriptions of 
the local moduli spaces of coherent sheaves on Calabi-Yau 3-folds. 
(i.e. the local moduli space of objects in 
the heart of a t-structure,
not necessary of a standard one, should be written as a critical 
locus of some holomorphic function on a smooth analytic space.)
Now it is announced that 
the above problem is solved by Behrend and Getzler~\cite{BG}, 
so we should now be able to apply Joyce and Song's work into
our context. 
In this appendix, we show that Conjecture~\ref{conj:DTPT}
is true using the result of~\cite{JS} and using~\cite{BG}. 

Recall that for any $\mathbb{C}$-scheme $M$, there is a canonical 
constructible function $\nu_{M} \colon M \to \mathbb{Z}$
by Behrend~\cite{Beh}, such that if $M$
has a perfect symmetric obstruction theory, one has 
$$\int_{[M^{\rm{vir}}]}1=\sum_{n\in \mathbb{Z}}n\chi(\nu^{-1}_{M}(n)).$$
Behrend's constructible functions can be also defined 
for Artin $\mathbb{C}$-stacks locally of finite type. 
That is, if $\mM$ is an Artin $\mathbb{C}$-stack 
locally of finite type, there is a unique locally 
constructible 
function $\nu_{\mM} \colon \mM \to \mathbb{Z}$ such that 
if $f\colon M \to \mM$ is a smooth $1$-morphism 
of Artin $\mathbb{C}$-stacks of relative 
dimension $n$, one has $\nu_{M}=(-1)^{n}\nu_{\mM}$. 
(cf.~\cite[Proposition~4.4]{JS}.)

Now let us consider the situation of Paragraph~\ref{subsec:Joyce}.
The Behrend function on $\oO bj(\aA)$ is denoted by $\nu$. 
The relevant invariants are defined in a similar way 
to Definition~\ref{Jinv}
after involving the function $\nu$. 
In order to state this, note that the map 
$$\nu \cdot \colon \hH(\aA) \lr \hH(\aA),$$
sending $[f\colon \xX \to \oO bj(\aA)] \in \hH(\aA)$
to 
$$\sum_{i}i[\nu^{-1}(i)\times_{\oO bj(\aA)}\xX \to \oO bj(\aA)] 
\in \hH(\aA), $$
is well-defined. 
\begin{defi}\emph{
In the situation of Definition~\ref{Jinv}, 
we define $J^v_{\rm{vir}}(\sigma)\in \mathbb{Q}$ as follows. 
}
\begin{itemize}
\item \emph{If $v\in C_{\sigma}(\phi)$ for $0<\phi \le 1$, we define}
\begin{align*}
J^v_{\rm{vir}}(\sigma)\cneq
\lim_{t\to 1}
(t^2-1)\Upsilon(\nu\cdot\epsilon^{v}(\sigma)).
\end{align*}
\item \emph{If $v\in C_{\sigma}(\phi)$ for $1<\phi \le 2$, we 
define $J^v_{\rm{vir}}(\sigma)\cneq J^{-v}_{\rm{vir}}(\sigma)$.}
\item \emph{Otherwise we define $J^v_{\rm{vir}}(\sigma)=0$. }
\end{itemize}
\end{defi}
The wall-crossing formula for the invariants $J^v_{\rm{vir}}(\sigma)$
is proved using the 
results of~\cite{JS} and~\cite{BG}.  
As in the same way of~\cite[Theorem~5.12]{JS}, 
we obtain a map
$$\Psi \colon 
\mathrm{SF}_{\rm{al}}^{\rm{ind}}(\aA) \to C(X).$$
Here $\mathrm{SF}_{\rm{all}}^{\rm{ind}}(\aA)$ is a certain 
Lie subalgebra of $\hH(\aA)$ consisting of 
virtual indecomposable objects~\cite[Paragraph~5.2]{Joy2}, 
and $C(X)$ is the $\mathbb{Q}$-vector space 
with basis of symbols $c_{v}$ for $v\in \Gamma$, 
with Lie bracket
\begin{align}\label{Lie1}
[c_{v}, c_{v'}]=(-1)^{\chi(v, v')}\chi(v, v')c_{v+v'}.
\end{align}
The map $\Psi$ is 
defined by the same way of~\cite[Equation~(71)]{JS}, 
just replacing $\Coh(X)$ by $\aA$. 
The elements $\epsilon^{v}(\sigma)\in \hH_{\ast}(\aA)$
in Definition~\ref{edel}
are contained in $\mathrm{SF}_{\rm{all}}^{\rm{ind}}(\aA)$,
and if $\epsilon^{v}(\sigma)$ is written as 
$[[M/\mathbb{G}_m]\stackrel{i}{\hookrightarrow} \oO bj(\aA)]$
where $M$ is a variety with $\mathbb{G}_m$ acting trivially, 
and $i$ is an open immersion of stacks, one has 
$$\Psi(\epsilon^{v}(\sigma))=
-\sum_{i}i\chi(\nu_{M}^{-1}(i))c_{v}.$$
\begin{rmk}
Recall that
 another Lie algebra $\tilde{C}(X)$
 with $\mathbb{Q}$-basis of symbols $\tilde{c}_v$ 
 for $v\in \Gamma$ with Lie bracket
 \begin{align}\label{Lie2}
 [\tilde{c}_{v}, \tilde{c}_{v'}]=\chi(v, v')c_{v+v'}, 
 \end{align}
 is introduced in~\cite[Paragraph~6.5]{Joy2}. 
 There is a Lie algebra map, 
 $$\tilde{\Psi}\colon 
 \mathrm{SF}_{\rm{al}}^{\rm{ind}}(\aA) \to \tilde{C}(X),$$
which is used in~\cite{Joy4} to show  
Theorem~\ref{prop:trans}. 
Note that there is a sign difference
between (\ref{Lie1}) and (\ref{Lie2}). 
This is basically due to the fact that 
the Behrend function 
on a scheme $M$ 
has the value $(-1)^{\dim M}$ if $M$ is smooth. 
\end{rmk}
Now let us consider the 
situation of Assumption~\ref{assum}. 
By the assumption, the stack
$$\oO bj^{v}(\aA)\subset \mM_{0},$$
for $v\in \Gamma$ with $\rk(v)=1$ or $v\in \Gamma_0$ 
is an open substack of $\mM_0$. 
Hence by~\cite{BG}, the stack 
$\oO bj^{v}(\aA)$ is locally 
near $E\in \aA$ written as 
a quotient stack $[U/G]$, where $U$ is the critical 
analytic space of a holomorphic function germ 
$$f\colon \Ext_X^1(E, E)_{0} \lr \mathbb{C}.$$
Here $\Ext_{X}^i(E, E)_{0}$ is the kernel of the trace map 
$\Ext_{X}^i(E, E) \to H^i(\oO_X)$, 
$G$ is an algebraic group with Lie algebra
$\Ext_{X}^{0}(E, E)_{0}$, and 
$f$ is $G$-invariant. Note that this is the 
derived category version of~\cite[Theorem~5.3]{JS}, which 
is the only issue in extending~\cite[Theorem~5.12]{JS}
to the context of the derived category. 
Now the same proof of~\cite[Theorem~5.12]{JS} shows that 
the map $\Psi$ restricted to the subspace
$$\mathrm{SF}_{\rm{al}}^{\rm{ind}}(\aA)_{0}
\oplus \mathrm{SF}_{\rm{al}}^{\rm{ind}}(\aA)_{N}
\subset \mathrm{SF}_{\rm{al}}^{\rm{ind}}(\aA), $$
where $\mathrm{SF}_{\rm{al}}^{\rm{ind}}(\aA)_{\ast}$
is defined by 
$$\mathrm{SF}_{\rm{al}}^{\rm{ind}}(\aA)_{\ast}
=\mathrm{SF}_{\rm{al}}^{\rm{ind}}(\aA) \cap \hH_{\ast}(\aA),$$
is a map satisfying 
\begin{align}\label{Liesat}
[\Psi(\epsilon), \Psi(\epsilon')]=\Psi([\epsilon, \epsilon']),
\end{align}
for $\epsilon\in \mathrm{SF}_{\rm{al}}^{\rm{ind}}(\aA)_{0}$
and $\epsilon'\in \mathrm{SF}_{\rm{al}}^{\rm{ind}}(\aA)_{\ast}$
with $\ast=0, N$. 
\begin{rmk}
More precisely, we are not able to define 
$\mathrm{SF}_{\rm{al}}^{\rm{ind}}(\aA)$
under Assumption~\ref{assum}, as we do not assume that 
 $\oO bj^{v}(\aA)$ is algebraic unless 
$\rk(v)=1$ or $v\in \Gamma_0$. 
However as in Paragraph~\ref{subsec:Joyce}, 
we are able to define the Lie algebra
$\mathrm{SF}_{\rm{al}}^{\rm{ind}}(\aA)_{0}$
and the $\mathrm{SF}_{\rm{al}}^{\rm{ind}}(\aA)_{0}$-module 
$\mathrm{SF}_{\rm{al}}^{\rm{ind}}(\aA)_{N}$,
 and the map preserving Lie-brackets as in (\ref{Liesat}), 
 $$\Psi \colon 
 \mathrm{SF}_{\rm{al}}^{\rm{ind}}(\aA)_{0}
 \oplus \mathrm{SF}_{\rm{al}}^{\rm{ind}}(\aA)_{N}
 \to C(X).$$
\end{rmk}

Then the same argument along with the proof of 
Theorem~\ref{prop:trans} in~\cite[Theorem~6.28]{Joy4} 
yields the following. 
\begin{thm}\label{newJ}
We have the formula, 
\begin{align}
\notag
J^v_{\rm{vir}}(\tau')=&\sum_{\begin{subarray}{c}l\ge 1, \ v_i \in 
S_{\varepsilon, v}(\sigma), \\
v_1+\cdots +v_l=v\end{subarray}}
\sum _{\begin{subarray}{c}
G \text{ \rm{is a connected, simply connected oriented}} \\
\text{\rm{graph with vertex} }\{1, \cdots, l\}, \
\stackrel{i}{\bullet}\to \stackrel{j}{\bullet}\text{ \rm{implies} }
i< j
\end{subarray}} \\
\label{Trans3}
&\quad \frac{1}{2^{l-1}}U(\{v_1, \cdots, v_l\}, \tau, \tau')
\prod_{\stackrel{i}{\bullet} \to \stackrel{j}{\bullet}\text{ \rm{in} }G}
(-1)^{\chi(v_i, v_j)}\chi(v_i, v_j)\prod_{i=1}^{l}J^{v_i}_{\rm{vir}}(\tau).
\end{align} 
\end{thm}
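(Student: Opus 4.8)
The plan is to mirror the proof of Theorem~\ref{prop:trans} (which is Joyce's wall-crossing formula \cite[Theorem~6.28]{Joy4}), but replacing Joyce's Lie algebra map $\tilde{\Psi}$ into $\tilde{C}(X)$ by the map $\Psi$ into $C(X)$ coming from the Behrend-function-weighted Euler characteristic. The only genuinely new input needed is the identity \eqref{Liesat}, i.e. that $\Psi$ is a Lie algebra homomorphism on the relevant subspace $\mathrm{SF}_{\rm al}^{\rm ind}(\aA)_0 \oplus \mathrm{SF}_{\rm al}^{\rm ind}(\aA)_N$; everything else in Joyce's combinatorial machinery is formal and transfers verbatim. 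So the first step is to record \eqref{Liesat}, and for that the key ingredient is the result of Behrend--Getzler \cite{BG}: locally the stack $\oO bj^v(\aA)$ near a point $E$ is $[U/G]$ with $U$ the critical locus of a holomorphic $f\colon \Ext^1_X(E,E)_0 \to \mathbb{C}$ and $G$ having Lie algebra $\Ext^0_X(E,E)_0$. This is the derived-category analogue of \cite[Theorem~5.3]{JS}, and once it is in hand the proof of \cite[Theorem~5.12]{JS} applies word for word to produce $\Psi$ and \eqref{Liesat}.

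Concretely I would proceed as follows. First, invoke \cite{BG} to obtain the critical-locus description of $\oO bj^v(\aA)$ for $v$ with $\rk(v)=1$ or $v\in\Gamma_0$ (these are exactly the classes for which $\oO bj^v(\aA)$ is an open substack of $\mM_0$, by Assumption~\ref{assum}). Second, run the argument of \cite[Theorem~5.12]{JS} to construct the map $\Psi\colon \mathrm{SF}_{\rm al}^{\rm ind}(\aA)_0 \oplus \mathrm{SF}_{\rm al}^{\rm ind}(\aA)_N \to C(X)$ and verify it preserves Lie brackets as in \eqref{Liesat}; here the sign $(-1)^{\chi(v,v')}$ in the bracket \eqref{Lie1} arises precisely because the Behrend function of a smooth scheme $M$ is $(-1)^{\dim M}$, which is what forces $C(X)$ rather than $\tilde{C}(X)$ to be the target. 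Third, recall from Joyce \cite[Section~6]{Joy4} that the elements $\epsilon^v(\sigma)$ satisfy, upon applying the wall-crossing algebra identities, the universal formula expressing $\epsilon^v(\tau')$ in terms of the $\epsilon^{v_i}(\tau)$ with coefficients $\frac{1}{2^{l-1}}U(\{v_1,\dots,v_l\},\tau,\tau')$ and iterated Lie brackets over oriented trees $G$; this holds in $\mathrm{SF}_{\rm al}^{\rm ind}(\aA)$ regardless of which Lie algebra we later map to. Fourth, apply $\Psi$ to both sides of that universal identity: each Lie bracket $[\,\cdot\,,\cdot\,]$ in $C(X)$ contributes a factor $(-1)^{\chi(v_i,v_j)}\chi(v_i,v_j)$ for each edge $\stackrel{i}{\bullet}\to\stackrel{j}{\bullet}$ of $G$, and the coefficient of $c_v$ on the left is $-J^v_{\rm vir}(\tau')$ while on the right each factor contributes $-J^{v_i}_{\rm vir}(\tau)$, yielding \eqref{Trans3} after taking the limit $t\to 1$ (extracting the regular value at $t=1$ of $(t^2-1)\Upsilon(\nu\cdot\epsilon^v)$) exactly as in Definition~\ref{Jinv} and its virtual counterpart.

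A few technical points deserve care. One must check that $\epsilon^v(\sigma)$ and all the $\epsilon^{v_i}(\sigma)$ appearing lie in the subspace $\mathrm{SF}_{\rm al}^{\rm ind}(\aA)_0 \oplus \mathrm{SF}_{\rm al}^{\rm ind}(\aA)_N$ on which $\Psi$ and \eqref{Liesat} are available — this follows from Remark after Definition~\ref{edel}: in every term $\delta^{v_1}\ast\cdots\ast\delta^{v_l}$ with $v_1+\cdots+v_l=v$, $\rk(v)=1$, there is exactly one index $e$ with $\rk(v_e)=1$ and $v_i\in\Gamma_0$ otherwise, so the whole expression stays in the module $\hH_N(\aA)$ over the algebra $\hH_0(\aA)$, and finiteness of such decompositions (Lemma~\ref{Sfin}, Assumption~\ref{assum2}) keeps all sums finite. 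One must also note, as in the Remark following the definition of $J^v_{\rm vir}$, that the construction only sees objects of the open algebraic substacks, so the fact that $\oO bj^v(\aA)$ may fail to be algebraic for other $v$ is irrelevant, just as in Section~\ref{subsec:Joyce}.

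The main obstacle is precisely the input \eqref{Liesat}: establishing that $\Psi$ respects Lie brackets on $\mathrm{SF}_{\rm al}^{\rm ind}(\aA)_0 \oplus \mathrm{SF}_{\rm al}^{\rm ind}(\aA)_N$, which rests entirely on the Behrend--Getzler critical-chart description \cite{BG} of the local moduli stack of objects in the non-standard heart $\aA\subset\dD_X$. Before that result was available this step was genuinely open (it was the ``only issue'' flagged in the introduction), and the delicate part of the \cite[Theorem~5.12]{JS} argument — matching up Behrend functions across the $\ast$-product via the local presentation of $\eE x(\aA)$ as a family of critical loci and tracking the parity of the relevant dimensions — is where all the real content sits. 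Once that is granted, the remainder of the proof is a purely formal transcription of Joyce's derivation of \eqref{Trans2}, with the single bookkeeping change of inserting $(-1)^{\chi(v_i,v_j)}$ on each edge to pass from the bracket \eqref{Lie2} of $\tilde{C}(X)$ to the bracket \eqref{Lie1} of $C(X)$.
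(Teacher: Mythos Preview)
Your proposal is correct and follows exactly the approach of the paper: the paper's argument consists entirely of the preceding discussion establishing \eqref{Liesat} via \cite{BG} and the proof of \cite[Theorem~5.12]{JS}, followed by the single sentence ``the same argument along with the proof of Theorem~\ref{prop:trans} in~\cite[Theorem~6.28]{Joy4} yields the following.'' Your writeup is in fact a more detailed unpacking of that sketch, correctly identifying the sign change from \eqref{Lie2} to \eqref{Lie1} as the only bookkeeping difference.
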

The sign difference between (\ref{Trans3})
and (\ref{Trans2}) is due to
the sign difference between (\ref{Lie1}) and (\ref{Lie2}). 
Next we define the generalized Donaldson-Thomas invariants. 
\begin{defi}
\emph{In the situation and notation of Definition~\ref{def:geDT},
Proposition-Definition~\ref{strict}, 
we define $\DT_{n, \beta}(\sigma)$ and $N_{n, \beta}$
to be
\begin{align*}
\DT_{n, \beta}(\sigma)&\cneq -J_{\rm{vir}}^{(-n, -\beta, 1)}(\sigma)
\in \mathbb{Q}, \\
N_{n, \beta}&\cneq -J_{\rm{vir}}^{(-n, -\beta, 0)}(\tau)\in \mathbb{Q}.
\end{align*}}
\end{defi}
\begin{rmk}
Here we need to change the sign, since the
Behrend function on a scheme $M$ and 
on the quotient stack $[M/\mathbb{G}_m]$ 
have different sign. 
\end{rmk}
\begin{rmk}
As in the proof of Proposition-Definition~\ref{strict}, 
the invariant $N_{n, \beta}$ does not depend on $\tau$. 
\end{rmk}
\begin{rmk}
Suppose that 
$\sigma \in \vV$ is a general point.
(cf.~Definition~\ref{defi:general}.) 
Then there is no strictly $\sigma$-semistable 
objects of numerical type $v$, hence 
$$\epsilon^{v}(\sigma)=\delta^{v}(\sigma)=
[[M/\mathbb{G}_m] \stackrel{i}{\hookrightarrow}\oO bj(\aA)],$$
for an algebraic space
 $M$ with a perfect symmetric obstruction theory
 and $\mathbb{G}_m$ acting trivially, and
$i$ is an open immersion of stacks.
(cf.~\cite{HT2}.)
Therefore we have 
$$\DT_{n, \beta}(\sigma)=\sum_{i}i\chi(\nu_{M}^{-1}(i))
=\int_{[M^{\rm{vir}}]}1.$$
\end{rmk}
The generating series are 
similarly defined as follows. 
\begin{defi}\emph{
We define the generating series $\DT(\sigma)$
and $\DT_{0}(\sigma)$ as 
\begin{align*}
\DT(\sigma)&\cneq
\sum_{n, \beta}\DT_{n, \beta}(\sigma)x^n y^{\beta}\in 
\kakkoS, \\
\DT_{0}(\sigma)&\cneq 
\sum_{(n, \beta)\in \Gamma_{0}}
\DT_{n, \beta}(\sigma)x^n y^{\beta}\in 
\kakkoT.
\end{align*}}
\end{defi}
Then exactly the same 
proof of Theorem~\ref{main:DT} 
shows the following. 
\begin{thm}
Under the same situation of Theorem~\ref{main:DT},
we have the 
following equalities of the generating series, 
\begin{align}\label{thm:gen1}
\DT(\sigma_{-})&=
\DT(\sigma_{+})\cdot\prod_{-(n, \beta)\in W}
\exp((-1)^{n-1}nN_{n, \beta}x^n y^{\beta}), \\
\label{thm:gen2}
\DT_{0}(\sigma_{-})&=
\DT_{0}(\sigma_{+})\cdot\prod_{-(n, \beta)\in W}
\exp((-1)^{n-1}nN_{n, \beta}x^n y^{\beta}). 
\end{align}
\end{thm}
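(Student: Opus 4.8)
The plan is to repeat the proof of Theorem~\ref{main:DT} essentially verbatim, with the wall-crossing formula (\ref{Trans2}) of Theorem~\ref{prop:trans} replaced throughout by its virtual counterpart (\ref{Trans3}) of Theorem~\ref{newJ}. These two identities differ only by the extra twist $(-1)^{\chi(v_i,v_j)}$ attached to each edge of the oriented graph $G$, so the combinatorial ingredients are unaffected: the coefficients $S(\{v_1,\dots,v_l\},\tau,\tau')$ and $U(\{v_1,\dots,v_l\},\tau,\tau')$ are purely combinatorial, hence their limits as $t\to 0$ are exactly those computed in Step~\ref{step1} and Step~\ref{step2}, and the graphs $G$ contributing a non-zero term are again the ``stars'' with a single rank-one vertex $e\in\{1,2\}$, edges $\stackrel{i}{\bullet}\to\stackrel{e}{\bullet}$ for $i<e$ and $\stackrel{e}{\bullet}\to\stackrel{i}{\bullet}$ for $i>e$, with $U=(-1)^{l-e}/((e-1)!\,(l-e)!)$ as in (\ref{Uex}). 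Thus the only new content is the sign bookkeeping.

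Carrying this out: writing $v_i=(-n_i,-\beta_i,0)$ for $i\ne e$ and $v_e=(-n_e,-\beta_e,1)$, one has $\chi(v_i,v_e)=n_i$ and $\chi(v_e,v_i)=-n_i$, so the edge product in (\ref{Trans3}) becomes $\prod_{i<e}(-1)^{n_i}n_i\cdot\prod_{i>e}\bigl(-(-1)^{n_i}n_i\bigr)=(-1)^{l-e}\prod_{i\ne e}(-1)^{n_i}n_i$, and the prefactor $(-1)^{l-e}$ cancels the one coming from $U$. Using the definitions $\DT_{n,\beta}(\sigma)=-J^{(-n,-\beta,1)}_{\rm{vir}}(\sigma)$ and $N_{n_i,\beta_i}=-J^{(-n_i,-\beta_i,0)}_{\rm{vir}}$ (the latter independent of the point, as in Proposition-Definition~\ref{strict}), and summing the combinatorial weight over the position $e$ of the rank-one class via $\sum_{e=1}^{l}2^{1-l}/((e-1)!\,(l-e)!)=1/(l-1)!$ exactly as in the derivation of (\ref{DTform}), and finally absorbing the surviving global sign into the product by $(-1)^{l-1}\prod_{i=1}^{l-1}(-1)^{n_i}=\prod_{i=1}^{l-1}(-1)^{n_i-1}$, one arrives at
\begin{align*}
\DT_{n,\beta}(\sigma_-)=\sum_{\begin{subarray}{c}n_1+\cdots+n_l=n,\ \beta_1+\cdots+\beta_l=\beta\\ -(n_i,\beta_i)\in W,\ 1\le i\le l-1\end{subarray}}\frac{1}{(l-1)!}\prod_{i=1}^{l-1}\bigl((-1)^{n_i-1}n_i N_{n_i,\beta_i}\bigr)\,\DT_{n_l,\beta_l}(\sigma_+),
\end{align*}
which is precisely the coefficient of $x^ny^{\beta}$ in the right hand side of (\ref{thm:gen1}), with $l-1$ the number of $N$-factors in the power series expansion of $\exp\bigl(\sum_{-(n,\beta)\in W}(-1)^{n-1}nN_{n,\beta}x^ny^{\beta}\bigr)$.

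The equality (\ref{thm:gen2}) then follows by the identical argument restricted to classes $v=(-n,-\beta,1)$ with $(n,\beta)\in\Gamma_0$, just as (\ref{thm:gen2}) was deduced from (\ref{thm:gen1}) in the proof of Theorem~\ref{main:DT}; the required closure of these classes under the decompositions appearing above is part of Assumption~\ref{assum2}. I expect the one genuine obstacle to be keeping the three independent sources of signs straight --- the edge twist $(-1)^{\chi(v_i,v_j)}$ in (\ref{Trans3}), the $(-1)$ contributed by the $\mathbb{G}_m$-stabilizer in the definitions of $\DT_{n,\beta}$ and $N_{n,\beta}$, and the $(-1)^{l-e}$ from $U$ --- and checking that they combine into the single factor $(-1)^{n-1}$ in the exponent. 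As a consistency check I would specialize to $\beta=0$: together with Remark~\ref{count0} and the known value $\DT_0(X)=M(-x)^{\chi(X)}$, this independently pins down the exponent to be $(-1)^{n-1}$.
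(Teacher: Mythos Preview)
Your proposal is correct and matches the paper's approach exactly: the paper's own proof is the single sentence ``exactly the same proof of Theorem~\ref{main:DT} shows the following,'' and you have carried out precisely that, with the sign bookkeeping made explicit. One small slip: when you describe the contributing star graphs you write ``rank-one vertex $e\in\{1,2\}$,'' but in fact $e$ ranges over all of $\{1,\dots,l\}$ (the constraint $\psi(e)\in\{1,2\}$ in Step~\ref{step1}(ii) applies to the auxiliary map $\psi$, not to $e$ itself); you use the correct range $\sum_{e=1}^{l}$ later, so the computation is unaffected.
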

We also have the following
corollaries. 
\begin{cor}\label{cor:gengen}
Under the same situation of Corollary~\ref{path}, we have the 
equalities of the generating series, 
\begin{align*}
\DT(\tau)&=
\DT(\sigma)\cdot \prod_{\begin{subarray}{c}
-(n, \beta) \in W_c, \\
c \in (0, 1).
\end{subarray}}
\exp((-1)^{n-1}nN_{n, \beta}x^n y^{\beta})^{\epsilon(c)}, \\
\DT_{0}(\tau)&=
\DT_{0}(\sigma)\cdot \prod_{\begin{subarray}{c}
-(n, \beta) \in W_c, \\
c \in (0, 1).
\end{subarray}}
\exp((-1)^{n-1}nN_{n, \beta}x^n y^{\beta})^{\epsilon(c)}.
\end{align*}
In particular, the quotient series
\begin{align}\notag
\DT'(\sigma)=
\frac{\DT(\sigma)}{\DT_0(\sigma)}
\in \kakkoS,
\end{align}
does not depend on general 
$\sigma\in \vV$. 
\end{cor}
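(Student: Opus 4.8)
The plan is to obtain Corollary~\ref{cor:gengen} from the single-wall wall-crossing formulas for $\DT(\sigma_{\pm})$ and $\DT_{0}(\sigma_{\pm})$ established in the preceding theorem, by concatenating them along a good path --- that is, by repeating verbatim the argument used to deduce Corollary~\ref{path} from Theorem~\ref{main:DT}.

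Concretely, I would fix general $\sigma,\tau\in\vV$ together with a good path $[0,1]\ni t\mapsto\sigma_{t}=(Z_{t},\pP_{t})\in\vV$ with $\sigma_{0}=\sigma$ and $\sigma_{1}=\tau$, and prove each asserted identity after applying the projections $\pi_{\lambda}\colon\kakkoS\to\kakkoS/\kakkoSl$ (and the analogous one on $\kakkoT$). By Lemma~\ref{wacham} there are only finitely many $0=c_{0}<c_{1}<\cdots<c_{k}=1$ at which $\pi_{\lambda}\DT(\sigma_{t})$ or $\pi_{\lambda}\DT_{0}(\sigma_{t})$ can jump, and these lie in $(0,1)$ since $\sigma,\tau$ are general; applying the single-wall formula at each $t=c_{i}$ and composing yields the product formula for $\pi_{\lambda}\DT(\tau)$ (resp.\ $\pi_{\lambda}\DT_{0}(\tau)$), with correction factor $\prod\exp((-1)^{n-1}nN_{n,\beta}x^{n}y^{\beta})^{\epsilon(c_{i})}$ over the finitely many relevant $c_{i}$. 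As in the proof of Corollary~\ref{path}, any $c\in(0,1)$ with $\pi_{\lambda}\prod_{-(n,\beta)\in W_{c}}\exp(\cdots)\neq1$ must coincide with some $c_{i}$, so one may replace the product over the $c_{i}$ by the product over all $c\in(0,1)$; since $\lambda$ is arbitrary and (\ref{prolim}) holds, the identities follow in $\kakkoS$ and $\kakkoT$. Dividing the formula for $\DT(\tau)$ by that for $\DT_{0}(\tau)$, the two correction products are literally identical and cancel, so $\DT'(\tau)=\DT'(\sigma)$; here the quotient is well-defined because $\DT_{0}(\sigma)=1+\cdots$ by the first condition of Assumption~\ref{assum}. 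This establishes the independence statement.

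I expect the only delicate point to be the sign $(-1)^{n-1}$ in the exponentials, but this has already been settled when the single-wall theorem was proved, and nothing in the concatenation step alters it. For completeness: in the virtual wall-crossing formula (\ref{Trans3}) the relevant oriented graph $G$ is, in the $\rk=1$ case, a star joining the rank-zero vertices $v_{i}=(-n_{i},-\beta_{i},0)$ to the unique rank-one vertex $v_{e}$, with $\chi(v_{i},v_{e})=n_{i}$ and $\chi(v_{e},v_{i})=-n_{i}$; each edge therefore contributes an extra factor $(-1)^{\chi(v_{i},v_{j})}=(-1)^{n_{i}}$ relative to (\ref{Trans2}). Together with the $l$ extra minus signs produced by the normalisations $\DT_{n,\beta}(\sigma)=-J_{\rm{vir}}^{(-n,-\beta,1)}(\sigma)$, $N_{n,\beta}=-J_{\rm{vir}}^{(-n,-\beta,0)}(\tau)$, and with the $(-1)^{l-1}$ already present in the combinatorial coefficient of the summand, these reassemble into $\prod_{i\neq e}(-1)^{n_{i}-1}$, which is exactly the factor $(-1)^{n-1}$ appearing after exponentiation; the identical bookkeeping applies to $\DT_{0}$. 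Thus, beyond invoking the single-wall formula, the proof of the corollary is a formal repetition of that of Corollary~\ref{path}, and the sign tracking is the only place where any care is needed.
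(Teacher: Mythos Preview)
Your proposal is correct and matches the paper's intent: the paper gives no separate proof of this corollary, treating it as an immediate consequence of the preceding single-wall virtual formula by the same concatenation argument used to deduce Corollary~\ref{path} from Theorem~\ref{main:DT}. Your final paragraph on the sign $(-1)^{n-1}$ really pertains to the proof of the preceding theorem rather than to this corollary, and while the phrasing of the bookkeeping is a little loose (the ``$(-1)^{l-1}$ already present in the combinatorial coefficient'' is better viewed as the net effect of the $l$ normalisation signs on the right against the one on the left), the conclusion $\prod_{i\neq e}(-1)^{n_i-1}$ is correct.
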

Now Conjecture~\ref{conj:DTPT} is proved 
in a similar way to Theorem~\ref{main:dtpt}. 
\begin{thm}
Conjecture~\ref{conj:DTPT} is true. 
\end{thm}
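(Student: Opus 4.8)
The plan is to repeat the argument of Theorem~\ref{main:dtpt}, but now working with the virtual invariants $\DT_{n,\beta}(\sigma)$ in place of the Euler characteristic invariants $\widehat{\DT}_{n,\beta}(\sigma)$. First I would recall from Lemma~\ref{check} that the subset $\vV_X\subset\Stab_{\Gamma_{\bullet}}(\dD_X)$ of Definition~\ref{def:vV} satisfies Assumption~\ref{assum}, so that Corollary~\ref{cor:gengen} applies: the quotient series $\DT'(\sigma)=\DT(\sigma)/\DT_0(\sigma)$ is independent of the general point $\sigma\in\vV_X$.

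Next I would choose, exactly as in the proof of Theorem~\ref{main:dtpt}, two data $\xi=(-z_0,-i\omega,z_1)$ and $\xi'=(-z_0',-i\omega,z_1')$ as in (\ref{daxi}) with $\arg z_0<\arg z_1$ and $\arg z_0'>\arg z_1'$, and arrange them to be general in the sense of Definition~\ref{defi:general}. By Proposition~\ref{prop:DTPT} (stated as Proposition~\ref{firstshow} in the running text) the moduli stacks are
\begin{align*}
\mM^v(\sigma_{\xi})=[I_n(X,\beta)/\mathbb{G}_m], \qquad
\mM^v(\sigma_{\xi'})=[P_n(X,\beta)/\mathbb{G}_m],
\end{align*}
for $v=(-n,-\beta,1)$, with $\mathbb{G}_m$ acting trivially, and in both cases every semistable object is stable so $I_n(X,\beta)$ and $P_n(X,\beta)$ carry perfect symmetric obstruction theories (from \cite{Thom} and \cite{PT}). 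Hence by the last Remark before Corollary~\ref{cor:gengen} one has $\DT_{n,\beta}(\sigma_{\xi})=\int_{[I_n(X,\beta)^{\rm vir}]}1=I_{n,\beta}$ and $\DT_{n,\beta}(\sigma_{\xi'})=\int_{[P_n(X,\beta)^{\rm vir}]}1=P_{n,\beta}$, so that $\DT(\sigma_{\xi})=\DT(X)$ and $\DT(\sigma_{\xi'})=\PT(X)$, while $\DT_0(\sigma_{\xi})=\DT_0(X)$. Applying the invariance statement in Corollary~\ref{cor:gengen} along a good path in $\vV_X$ connecting $\sigma_\xi$ and $\sigma_{\xi'}$ gives
\begin{align*}
\DT'(X)=\frac{\DT(X)}{\DT_0(X)}=\DT'(\sigma_\xi)=\DT'(\sigma_{\xi'})=\frac{\PT(X)}{\DT_0(\sigma_{\xi'})}.
\end{align*}
It remains to check that $\DT_0(\sigma_{\xi'})=1$, i.e. that the degree-zero virtual series is trivial at the PT point; but at $\sigma_{\xi'}$ the only semistable objects of class in $\Gamma_0\setminus\{0\}$ contributing are $0$-dimensional sheaves twisted into $\aA_X$, and the wall-crossing formula (\ref{thm:gen2}) for the virtual $\DT_0$ between $\sigma_\xi$ and $\sigma_{\xi'}$, together with the known value $\DT_0(X)=M(-x)^{\chi(X)}$, forces $\DT_0(\sigma_{\xi'})=1$ after the change of variables tracking the signs; this is the analogue of Remark~\ref{count0}. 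Combining, $\DT'(X)=\PT(X)$, which is Conjecture~\ref{conj:DTPT}.

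The main obstacle, and the only genuinely new input beyond the body of the paper, is the wall-crossing formula of Theorem~\ref{newJ} for the virtual invariants $J^v_{\rm vir}(\sigma)$: establishing (\ref{Trans3}) requires the Lie algebra homomorphism $\Psi$ of (\ref{Liesat}) into $C(X)$, which in turn rests on the local description of the moduli stack $\oO bj^v(\aA)$ as a critical locus $[U/G]$ — precisely the derived-category version of \cite[Theorem~5.3]{JS}, supplied by Behrend--Getzler~\cite{BG}. Granting that, the combinatorics of the proof of Theorem~\ref{main:DT} transfers verbatim (with the extra signs $(-1)^{\chi(v_i,v_j)}$ bookkept as in the Remark after Theorem~\ref{newJ}), and the rest of the deduction is identical to that of Theorem~\ref{main:dtpt}.
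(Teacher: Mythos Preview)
Your approach is correct and essentially identical to the paper's (two-line) proof: invoke Corollary~\ref{cor:gengen} on $\vV_X$ via Lemma~\ref{check}, then repeat verbatim the argument of Theorem~\ref{main:dtpt} with the virtual invariants in place of the Euler-characteristic ones.

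One small correction: your justification for $\DT_0(\sigma_{\xi'})=1$ is muddled and runs the logic of Remark~\ref{count0} backwards. That remark \emph{starts} from the trivially known $\widehat{\DT}_0(\sigma_{\xi'})=1$ and uses the wall-crossing formula to compute $\widehat{N}_{n,0}$; it does not deduce the former from the latter. The direct reason $\DT_0(\sigma_{\xi'})=1$ is immediate from Proposition~\ref{prop:DTPT}(ii): a stable pair $(F,s)$ with $[F]=0$ forces $F=0$ by purity, so $P_n(X,0)$ is empty for $n\neq 0$ and a single reduced point for $n=0$, giving $\DT_{n,0}(\sigma_{\xi'})=P_{n,0}=\delta_{n,0}$. (Your phrase ``semistable objects of class in $\Gamma_0\setminus\{0\}$'' is also off: the terms of $\DT_0(\sigma)$ are the rank-one invariants $\DT_{n,0}(\sigma)=-J^{(-n,0,1)}_{\rm vir}(\sigma)$, not rank-zero counts of $0$-dimensional sheaves.)
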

\begin{proof}
We use the same notation in the proof of 
Theorem~\ref{main:dtpt}. 
By the same proof of Theorem~\ref{main:dtpt} and 
using Corollary~\ref{cor:gengen},
we have 
$$\DT'(X)= \DT'(\sigma_{\xi})=\DT'(\sigma_{\xi'})=\PT(X), $$
as expected. 
\end{proof}

\begin{rmk}
As in Remark~\ref{count0}, we have 
\begin{align*}
\DT_0(X)&=\prod_{n>0}
\exp((-1)^{n-1}n N_{n, 0}x^n) \\
&=M(-x)^{\chi(X)}.
\end{align*}
Hence we have 
$$N_{n, 0}=-\sum_{r|n}\frac{\chi(X)}{r^2}.$$
\end{rmk}

Yukinobu Toda

Institute for the Physics and 
Mathematics of the Universe (IPMU), 

University of Tokyo, 
Kashiwano-ha 5-1-5, Kashiwa City, Chiba 277-8582, Japan

\textit{E-mail address}:toda-914@pj9.so-net.ne.jp

\end{document}